\pgfplotsset{compat=newest}    
\renewcommand{\emptyset}{\varnothing}
\theoremstyle{plain}\newtheorem{thm}{Theorem}[section]
\theoremstyle{definition}\newtheorem{defi}[thm]{Definition}
\theoremstyle{plain}
\theoremstyle{plain}\newtheorem{cor}[thm]{Corollary}
\theoremstyle{plain}\newtheorem{lem}[thm]{Lemma}
\theoremstyle{plain}
\theoremstyle{plain}\newtheorem{pro}[thm]{Proposition}
\theoremstyle{plain}
\theoremstyle{plain}
\theoremstyle{plain}
\theoremstyle{remark}
\theoremstyle{remark}
\theoremstyle{remark}\newtheorem{nota}[thm]{Notation}
\theoremstyle{remark}
\theoremstyle{plain}
\theoremstyle{plain}
\theoremstyle{plain}
\theoremstyle{remark}\newtheorem{rmk}[thm]{Remark}
\theoremstyle{remark}
\theoremstyle{remark}\newtheorem{recall}[thm]{Recall}
\numberwithin{equation}{section}
\numberwithin{thm}{section}
\renewcommand{\Re}{\operatorname{Re}}
\renewcommand{\Im}{\operatorname{Im}}
\definecolor{c1}{HTML}{88d5d2} 
\definecolor{c2}{HTML}{9c9d47} 
\definecolor{c3}{HTML}{fec842} 
\definecolor{c4}{HTML}{e97a2e} 
\definecolor{c5}{HTML}{834e71} 
\begin{document}

\begin{center}
{\Large \textbf{Systems of Curves on Non-Orientable Surfaces}}

{\Large \textbf{ }} 

{\large \textsc{Xiao} CHEN} 

{\small Tsinghua University}

{\footnotesize \emph{e-mail:} x-chen20@mails.tsinghua.edu.cn}

\end{center}

\begin{abstract}
    We show that the order of the cardinality of maximal complete $1$-systems of loops on non-orientable surfaces is $\sim |\chi|^{2}$. In particular, we determine the exact cardinality of maximal complete $1$-systems of loops on punctured projective planes. To prove these results, we show that the cardinality of maximal systems of arcs pairwise-intersecting at most once on a non-orientable surface is $2|\chi|(|\chi|+1)$.




\end{abstract}





\section{Introduction}\label{section1}

    In \cite{harvey1981boundary}, Harvey defined \emph{curve complexes} in analogy to Tits buildings for symmetric spaces. In keeping with this analogy, a type of rigidity akin to Mostow rigidity \cite{zbMATH01089297} holds for the curve complex of a surface: its automorphism group is equal to the mapping class group \cite{zbMATH01089297, MR1722024, MR3209702}. This rigidity asserts that the combinatorial geometry of curve complexes encodes all algebraic information for mapping class groups. For example, Masur-Minsky established the $\delta$-hyperbolicity \cite{masur1998geometry} of the curve complex and used it to determine word length bounds for conjugating elements between pseudo-Anosov mapping classes \cite[Theorem 7.2]{MR1791145}. Dahmani, Guirardel and Osin \cite[Theorem 2.31]{MR3589159} employed the characterisation of pseudo-Anosovs as loxodromic actions on the curve complex \cite[Proposition 3.6]{masur1998geometry} to prove there is a positive $n$ such that for \emph{any} pseudo-Anosov element $a$, the normal closure
    of $a^n$ is free and purely pseudo-Anosov, thereby resolving two of Ivanov's problems \cite{MR2264532}.  

    We define two generalisations of the curve complex. The first one is called the \emph{$k$-curve complex} (\cref{k complex}). It is a simplicial complex whose cells each correspond to (an isotopy class of) a \emph{$k$-system of loops} \cite{juvan1996systems}, which is defined as a collection of simple loops in distinct free isotopy classes pairwisely intersecting at most $k$ times. We say a $k$-system of loops is \emph{complete} if the intersection number between any two loops is always $k$, and the subcomplex of the curve complex comprised of cells corresponding to complete $k$-systems is called the \emph{complete $k$-curve complex} (\cref{complete k complex}). This is the second generalisation of the curve complex. 
    
    The dimension of a cell of the $k$-curve complex is one less than the number of loops in the corresponding $k$-system, and thanks to \cite{juvan1996systems}, we know that $k$-curve complexes are finite dimensional. We say a (complete) $k$-system of loops is \emph{maximal} if it realises a cell of the complex with the maximal dimension. This naturally raises the following much-studied question: 
    \begin{center}
    \emph{what is the cardinality of a maximal (complete) system of curves\footnote{The word "curves" here refers to both simple loops and simple arcs. Take care to distinguish this usage from the "curve" in the term "\emph{curve} complex", which refers exclusively to simple loops.}?} 
    \end{center}
    
    We have come a long way since Juvan--Malni--Mohar's initial super-exponential (in Euler characteristic) upper bounds in \cite{juvan1996systems}, and the current best results for orientable surfaces $S_g$ of genus $g$ are as follows:
    \begin{itemize}
        \item there are $2g+1$ loops in any maximal complete $1$-systems of loops \cite{malestein2014topological}. The same cardinality holds more generally for punctured surfaces (\cref{pro1}).
        \item the cardinality of maximal $k$-systems of loops is between $c_k g^{k+1}$ and $ C_k g^{k+1} \log g$ for positive $c_k$ and $C_k$ \cite{MR4034921}. Greene's results hold more generally for surfaces $S_{g,n}$ with $n$ punctures when $k$ is even or when $(k-2)g \geq n-2$. 

    \end{itemize}

    We obtain the first lower and upper bounds for the cardinality of maximal complete $1$-systems of loops on non-orientable surfaces, as well as the exact cardinality of such systems for $n$-punctured projective planes. In so doing, we improve on previous lower bounds by Nicholls--Scherich--Shneidman for general $1$-systems of loops \cite{nicholls2021large} and extend Przytycki's exact cardinality of maximal $1$-systems of arcs \cite{przytycki2015arcs} to also hold on non-orientable surfaces.


    \subsection{Main Results}

    We use the following notation throughout, let

    \begin{itemize}
        \item $F$ denote a finite-type surface, possibly non-orientable.
        \item $S_{g,n}$ denote the compact orientable surface of genus $g \geq 0$ and with $n$ boundary components.
        \item $N_{c,n}$ denote the compact non-orientable surface of genus $c \geq 1$ (i.e. with $c$ cross-caps) and with $n$ boundary components.
        \item $L$ denote a system of loops and $A$ denote a system of arcs on $F$.
        \item $\mathscr{L}(F,k)$ denote the collection of all $k$-systems of loops on $F$, and let $\mathscr{A}(F,k)$ denote the collection of all $k$-systems of arcs on $F$.
        \item $\widehat{\mathscr{L}}(F,k)$ denote the collection of all complete $k$-systems of loops on $F$, and let $\widehat{\mathscr{A}}(F,k)$ denote the collection of all complete $k$-systems of arcs on $F$.
        \item $\| \mathscr{X} \|_\infty := \max\{ \#X \mid X \in \mathscr{X} \}$, where $\#X$ is the cardinality of $X$.
        \item $\chi(F)$ denote the Euler characteristic of a surface $F$. In cases without ambiguity, we often abbreviate it as $\chi$.
    \end{itemize}

    \subsubsection{Counting Arcs}

    Przytycki's \cite{przytycki2015arcs} proved that the cardinality of $1$-systems of arcs on a hyperbolic orientable surface is $2|\chi|(|\chi|+1)$. We investigate the properties of lassos, thereby extending this result to non-orientable surfaces.

    Przytycki \cite{przytycki2015arcs} proved that the cardinality of $1$-systems of arcs on a hyperbolic orientable surface is $2|\chi|(|\chi|+1)$. We aim to extend this result to non-orientable surfaces. However, Przytycki's proof of \cite[Lemma 2.5]{przytycki2015arcs} is fundamentally based on orientation-preserving isometries of $\mathbb{H}^2$ and cannot be directly adapted to the non-orientable case. By analyzing the geometric properties of lassos at honda (\cref{lemma Lasso}), we establish a stronger theorem (\cref{part triagle embedding}) that applies to the non-orientable case and improves upon the conclusion of \cite[Lemma 2.5]{przytycki2015arcs}. As a corollary, we obtain the desired generalization of \cite[Lemma 2.5]{przytycki2015arcs} in \cref{whole triagle embedding}.

    \begin{thm}
    \label{thm4}
            Let $F$ be a non-orientable complete finite-area hyperbolic surface with at least one cusp. The maximal cardinality $\| \mathscr{A}(F,1) \|_\infty$ of $1$-systems of arcs on $F$ satisfies:
            \begin{align}
                \| \mathscr{A}(F,1) \|_\infty = 2|\chi(F)|(|\chi(F)|+1). \notag
            \end{align}
    \end{thm}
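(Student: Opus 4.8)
The plan is to prove the two inequalities separately, following the broad strategy of Przytycki while replacing the orientation-dependent steps with the lasso analysis the excerpt promises. For the \emph{lower bound} $\| \mathscr{A}(F,1) \|_\infty \geq 2|\chi|(|\chi|+1)$, I would construct an explicit complete $1$-system of arcs of this size. The natural approach is to fix an ideal triangulation of $F$ (which has exactly $-3\chi = 3|\chi|$ edges and $-2\chi = 2|\chi|$ triangles), and then enlarge the set of edges by adding, for each triangle, the arcs obtained from the existing edges by elementary diagonal-type moves, checking that the resulting collection remains pairwise $1$-intersecting. One must verify both that no two arcs coincide up to isotopy and that each new arc meets every other arc at most once; counting the contributions triangle-by-triangle should yield exactly $2|\chi|(|\chi|+1)$. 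This half is essentially combinatorial and I expect it to go through on non-orientable surfaces with only bookkeeping changes, since building an explicit family does not require any global orientation.

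For the \emph{upper bound} $\| \mathscr{A}(F,1) \|_\infty \leq 2|\chi|(|\chi|+1)$, I would fix a maximal $1$-system $A$ realised as geodesic arcs for a complete hyperbolic metric, and run Przytycki's counting scheme: cut $F$ along $A$ and bound the number of arcs by controlling how arcs can enter and leave each cusp. The key device is to look at each arc near a cusp, lift to the universal cover $\mathbb{H}^2$, and analyse the combinatorial pattern of arc-endpoints at the cusp together with the ``triangle embedding'' phenomenon recorded in \cref{part triagle embedding} and \cref{whole triagle embedding}. Concretely, I would assign to each arc a pair of its two cusp-ends, study the cyclic order of arc-ends around each cusp, and use the $1$-intersection hypothesis to show that the local configurations embed as (parts of) triangles, which caps the count. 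The global bound then follows by summing the local contributions over all cusps and relating them to $\chi$ via an Euler-characteristic computation on the complementary regions.

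The main obstacle is exactly the place the introduction flags: Przytycki's \cite[Lemma 2.5]{przytycki2015arcs} is proved using orientation-preserving isometries of $\mathbb{H}^2$, and on a non-orientable surface the deck group contains orientation-reversing isometries (glide reflections and reflections), so the ``two arcs emanating from a cusp in the same direction must be isotopic'' type of argument can fail or change form. My plan is to route around this by invoking the stronger statement \cref{part triagle embedding}, which the excerpt says is established via the geometric behaviour of \emph{lassos at honda} (\cref{lemma Lasso}) and which holds in the non-orientable setting. Thus the hard analytic work is isolated in \cref{lemma Lasso}--\cref{whole triagle embedding}, and here I would simply cite those results, feeding them into the same Euler-characteristic counting skeleton.

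Assembling the two bounds gives the equality. I would present the lower bound first as a clean construction, then the upper bound as the counting argument, emphasising that every step where Przytycki used orientability is replaced by an appeal to the lasso-based results already proved, so that the final count $2|\chi|(|\chi|+1)$ is obtained uniformly for orientable and non-orientable $F$ with at least one cusp.
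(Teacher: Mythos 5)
Your overall architecture (lower bound by explicit construction, upper bound by a Przytycki-style count with the orientation-dependent step replaced by the lasso/triangle-embedding results) matches the paper's, but there is a genuine gap in your lower bound. Starting from an ideal triangulation and adding, for each triangle, arcs obtained by diagonal-type moves produces only linearly many arcs in $|\chi|$: an ideal triangulation has $3|\chi|$ edges and $2|\chi|$ triangles, and each edge admits a single flip, so this scheme yields $O(|\chi|)$ arcs and no amount of bookkeeping will reach the quadratic count $2|\chi|(|\chi|+1)$. The paper instead cuts $F$ along a $0$-system $A_0$ of $|\chi|+1$ arcs into a \emph{single} $(2|\chi|+2)$-gon $P$ and takes $A_0$ together with \emph{all} diagonals of $P$; the $\tfrac{(2|\chi|+2)(2|\chi|-1)}{2}$ diagonals supply the quadratic term, and pairwise intersection at most once is immediate because two diagonals of a polygon cross at most once. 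You need this (or an equivalent) one-polygon decomposition, not a triangulation with flips.

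For the upper bound you correctly isolate the hard analytic input in \cref{lemma Lasso}--\cref{whole triagle embedding} and propose to cite it, which is exactly what the paper does (through \cref{lem2}). But your description of the counting skeleton is too vague to check, and the mechanism you sketch (cyclic order of arc-ends at cusps plus an Euler-characteristic computation on complementary regions) is not the one that actually closes the argument. The paper's mechanism is an area-versus-multiplicity comparison: each oriented arc contributes a tip, each tip a nib, i.e.\ an ideal triangle of area $\pi$, so the disjoint union $\Delta$ of nibs has area $2\pi\#A$; \cref{pro2} shows the map $\nu:\Delta\to F$ is at most $2(|\chi|+1)$-to-one at every point, by turning the slits through a common image point $s$ into a $0$-system of arcs on $F\setminus\{s\}$ running from $s$ to the cusps and applying \cref{lem1} (with \cref{lem3} guaranteeing the slits only meet at $s$); dividing $2\pi\#A$ by $\operatorname{Area}(F)=2\pi|\chi|$ then gives $\#A\le 2|\chi|(|\chi|+1)$. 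If you intend to reproduce Przytycki's count you must spell out this pointwise multiplicity bound, since that is where \cref{whole triagle embedding} is actually consumed.
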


    Combining with Przytycki's result, one obtains the following corollary.

    \begin{cor}
    \label{thm4cor}
            Let $F$ be a complete finite-area hyperbolic surface with at least $1$ cusp ($F$ is allowed to be non-orientable). Then,
            \begin{align}
                \|\mathscr{A}(F,1) \|_\infty = 2|\chi(F)|(|\chi(F)|+1). \notag
            \end{align}
    \end{cor}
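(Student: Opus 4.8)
The plan is to reduce the statement to the two cases already handled separately, since the right-hand side $2|\chi(F)|(|\chi(F)|+1)$ is precisely the common value appearing in both the orientable and non-orientable settings. The hypothesis permits $F$ to be either orientable or non-orientable, so I would begin by splitting on the orientability of $F$ and then invoke the appropriate known result in each case.

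If $F$ is orientable, then it is a complete finite-area hyperbolic surface with at least one cusp, hence (up to homeomorphism) of the form $S_{g,n}$ with $n \geq 1$, the cusps corresponding to punctures. Przytycki's theorem \cite{przytycki2015arcs} gives exactly
\begin{align}
\|\mathscr{A}(F,1)\|_\infty = 2|\chi(F)|(|\chi(F)|+1) \notag
\end{align}
in this case, so nothing further is required. If instead $F$ is non-orientable, then the hypotheses of \cref{thm4} hold verbatim, and that theorem yields the same value $2|\chi(F)|(|\chi(F)|+1)$.

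Since these two cases are mutually exclusive and exhaustive, and since they produce the identical closed-form expression in terms of $\chi(F)$, the corollary follows at once. There is no genuine obstacle here: all of the mathematical content resides in \cref{thm4} for the non-orientable bound and in Przytycki's prior work for the orientable bound, and the corollary simply records that both are unified by a single formula. The only point meriting a moment's care is the matching of conventions, namely checking that the word \emph{cusp} in the hypothesis corresponds to the punctured hyperbolic setting underlying Przytycki's statement; this is purely terminological and entails no real work.
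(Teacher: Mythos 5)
Your proposal is correct and matches the paper's own (implicit) argument: the corollary is obtained precisely by splitting on orientability and combining \cref{thm4} with Przytycki's theorem for the orientable case, both yielding the same formula $2|\chi(F)|(|\chi(F)|+1)$. Nothing further is needed.
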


    \subsubsection{Counting Loops}
    
    Nicholls, Scherich, and Shneidman provided a lower bound for the cardinalities of maximal $1$-systems of loops on non-orientable surfaces \cite[Theorem A]{nicholls2021large}. Their bound is a degree-two polynomial in the genus $c$ and degree-one in the number $n$ of boundary components. Our results for complete $1$-systems in \cref{cor1} (which is a corollary to \cref{thm1} for the general statement) and for general systems of loops in \cref{thm5} improve the coefficient of the quadratic term in $c$ and increase the degree of $n$ to two.

    Furthermore, in \cref{thm2}, we provide a quadratic upper bound for the cardinalities of maximal complete $1$-systems of loops on non-orientable surfaces. Taken together, these results show that the cardinalities of maximal $1$-systems of loops on non-orientable surfaces are of quadratic order expressed in terms of the Euler characteristic, which is in sharp contrast to the linear order behavior observed for orientable surfaces.

    \begin{cor}
    \label{cor1}
        Let $F = N_{c,n} $. The maximal cardinality of complete $1$-systems of loops on $F$ (denoted by $\| \widehat{\mathscr{L}}(F,1) \|_\infty$) satisfies:
        \begin{align}
        \| \widehat{\mathscr{L}}(F,1) \|_\infty \geq { {c-1+n} \choose 2 } + c = \tfrac{1}{2} \left( (c+n)^2 - 3n - c + 2 \right).\notag
        \end{align}
    \end{cor}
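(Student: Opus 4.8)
The statement is a lower bound, so the plan is to exhibit one explicit complete $1$-system of loops on $F=N_{c,n}$ of cardinality $\binom{c-1+n}{2}+c$; this is precisely \cref{thm1} specialised to $F=N_{c,n}$, so at the level of this corollary the remaining tasks are the construction itself and the arithmetic identity $\binom{c-1+n}{2}+c=\tfrac12\big((c+n)^2-3n-c+2\big)$. Writing $|\chi|=c+n-2$, this number equals $\tfrac12|\chi|(|\chi|+1)+c$, which already displays the quadratic order promised in the abstract; the two expressions are therefore interchangeable, and it suffices to produce $\binom{c-1+n}{2}+c$ pairwise once-crossing loops in distinct free isotopy classes.

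For the construction I would exploit exactly the feature that separates the non-orientable case from the orientable one: on a planar (subsurface-of-the-sphere) piece any two simple closed curves cross an even number of times, since their mod-$2$ intersection pairing vanishes, so geometric intersection number $1$ is impossible there — this is the mechanism behind the merely linear bound $2g+1$ of \cref{pro1} — whereas a one-sided loop can, and generically must, cross a transverse loop an odd number of times. The cleanest model is projective: when $c=1$, realise $F=N_{1,n}$ as $\mathbb{RP}^2$ with $n$ punctures $p_1,\dots,p_n$ in general position, and for each pair $\{i,j\}$ let $\gamma_{ij}$ be the one-sided loop obtained by pushing the projective line through $p_i,p_j$ off all punctures. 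Since two distinct projective lines of $\mathbb{RP}^2$ meet in exactly one point, the $\gamma_{ij}$ form a complete $1$-system of $\binom{n}{2}$ loops, and one further generic line supplies the extra $+1=+c$ summand. For general $c$ the plan is to spread this arrangement over the cross-caps: keep a pair-indexed family of one-sided loops indexed by $\binom{c-1+n}{2}$ pairs drawn from the $c-1+n$ combined puncture/cross-cap features, and adjoin the $c$ one-sided core loops of the cross-caps, arranging everything into projective-line position inside a suitable planar-plus-cross-cap model so that any two meet exactly once.

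The verification then has four parts: each loop is simple and essential; any two are in minimal position with exactly one transverse intersection; the loops lie in pairwise distinct free isotopy classes; and the count is $\binom{c-1+n}{2}+c$. Distinctness is cheap, since the loops are distinguished by their intersection numbers with a fixed arc system (ideal triangulation) of $F$, an isotopy invariant, and the count is immediate from the indexing. The main obstacle is showing the geometric intersection number is exactly one and never more. I would place every loop in its geodesic representative for the complete finite-area hyperbolic metric available by hypothesis (as in \cref{thm4}), so that minimal position is automatic and intersection number equals the actual number of crossings, and then rule out bigons and extra crossings through the local projective-line count. The delicate crossings are those of two loops sharing a feature and those occurring inside a single cross-cap, where one-sidedness can a priori manufacture a second intersection; controlling these is where the hyperbolic (or a careful train-track) normal form does the real work, and is the step I expect to be hardest.

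Finally, once \cref{thm1} is in hand the corollary is formal: apply it to $F=N_{c,n}$ and simplify, the two sides of the displayed inequality being the same quantity written via $c,n$ and via $\chi$. I would therefore present the construction above as the conceptual heart of \cref{cor1} and defer the general-position bookkeeping to the proof of \cref{thm1}.
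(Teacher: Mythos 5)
At the formal level your reduction is the same as the paper's: \cref{cor1} is obtained from \cref{thm1} by choosing the optimal number of $2$-sided loops, namely $t=c-1$, which turns the bound $\binom{c-1+n}{2}+\lfloor\tfrac{c-1-t}{2}\rfloor+t+1$ into $\binom{c-1+n}{2}+c$; your identity $\binom{c-1+n}{2}+c=\tfrac12|\chi|(|\chi|+1)+c$ is also correct. Your $c=1$ construction (projective lines through pairs of punctures, plus one more line) is essentially the paper's: every loop passes once through the single cross-cap, which is what forces each pair to meet exactly once, and the count $\binom{n}{2}+1$ matches.

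The genuine gap is in your construction for $c\geq 2$. You propose to adjoin ``the $c$ one-sided core loops of the cross-caps,'' but the core loops of distinct cross-caps are \emph{disjoint} (the cross-caps sit in disjoint regions of the surface), so their geometric intersection number is $0$ and they cannot all lie in a single \emph{complete} $1$-system. More generally, once $c\geq 2$ there is no projective-line incidence structure forcing two loops associated to disjoint pairs of features to meet at all, so ``arranging everything into projective-line position'' does not go through. Note also that an all-one-sided system is the $t=0$ case of \cref{thm1}, which only yields $\binom{c-1+n}{2}+\lfloor\tfrac{c-1}{2}\rfloor+1$, strictly less than the claimed bound for $c\geq 2$. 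The paper's construction fixes this by designating one \emph{central} cross-cap through which every loop passes exactly once (so any two loops meet there, and only there): the $c$ extra loops are the core of the central cross-cap together with $c-1$ \emph{two-sided} loops, each threading the central cross-cap and one outer cross-cap, while the $\binom{c-1+n}{2}$ remaining loops are indexed by pairs of \emph{gaps} between the outer features rather than by pairs of features. You need this (or an equivalent device guaranteeing a common crossing locus) to make the lower bound construction work for general $c$.
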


    \begin{thm}
        \label{thm5}
        Let $F = N_{c,n} $. The maximal cardinality of $1$-systems of loops on $F$ (denoted by $\| \mathscr{L}(F,1) \|_\infty$) satisfies:
            \begin{align}
                \|\mathscr{L}(F,1)\|_\infty & \geq { {c-1+n} \choose 2 } + { {c-1} \choose 2 } + (c-1)(c+n-2) + 1  \notag \\
                & = 2c^2+\tfrac{1}{2}n^2+2cn-6c-\tfrac{5}{2}n+5.\notag
            \end{align}
        \end{thm}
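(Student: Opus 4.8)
The plan is to prove this lower bound by exhibiting an explicit $1$-system of loops on $F = N_{c,n}$ whose cardinality equals the claimed quantity, organised into three families plus one distinguished loop, matching the three summands $\binom{c-1+n}{2}$, $\binom{c-1}{2}$, $(c-1)(c+n-2)$ and the $+1$. Since every complete $1$-system is in particular a $1$-system, this construction should be viewed as a refinement of the one behind \cref{cor1}: the shared term $\binom{c-1+n}{2}$ arises from the same combinatorial skeleton, while the freedom gained by dropping the completeness requirement (now pairs of loops are allowed to be disjoint) is exactly what lets us append the extra $\binom{c-1}{2} + (c-1)(c+n-2) + 1 - c$ loops. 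First I would fix a convenient model of $F$, realising $N_{c,n}$ as a disc carrying $c$ cross-caps and $n$ boundary holes, with $c-1+n$ distinguished \emph{features} — the $n$ boundary components together with $c-1$ of the cross-caps — placed in convex position along an auxiliary circle, and with the remaining cross-cap reserved as a basepoint region. As in \cref{thm4}, I would put a complete finite-area hyperbolic metric on $F$ so that every essential class has a geodesic representative and geometric intersection numbers are computed in minimal position.

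Next I would describe the three families in this model. Family $A$ consists of one loop enclosing each unordered pair of features; placing the features on a circle makes these into chord-like curves, giving $\binom{c-1+n}{2}$ of them. Family $B$ consists of one loop through each unordered pair of the $c-1$ chosen cross-caps, giving $\binom{c-1}{2}$; passing through two cross-caps keeps these curves two-sided but lets them be routed disjointly from most of Family $A$. Family $C$ associates to each of the $c-1$ cross-caps a sub-family of $c+n-2 = |\chi|$ one-sided loops, each running through that cross-cap and out to one of the remaining $|\chi|$ features, for $(c-1)(c+n-2)$ loops in total. Finally, the distinguished $+1$ loop is a single one-sided curve through the reserved cross-cap. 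The crucial non-orientable mechanism throughout is that a loop which passes through a cross-cap becomes one-sided, and this parity change converts what would be a forced double intersection (as between two feature-enclosing curves on an orientable surface) into a single transverse crossing; this is precisely the phenomenon that allows the quadratic growth unavailable in the orientable case.

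The verification then has three components: that every loop is essential and no two are freely isotopic, that each pair meets at most once, and that the total count is as claimed. For essentiality and distinctness I would track how each loop interacts with the feature circle and with the cross-cap cores — e.g.\ recording which features a loop separates and its one- or two-sidedness — to produce a combinatorial invariant distinguishing the classes. For the intersection bound I would argue pair by pair within a family (where the convex-position picture makes crossings transparent) and then across families, each time ruling out bigons using the hyperbolic/minimal-position setup and the lasso-and-bigon analysis developed for \cref{lemma Lasso}. Summing $\binom{c-1+n}{2} + \binom{c-1}{2} + (c-1)(c+n-2) + 1$ and simplifying to $2c^2 + \tfrac12 n^2 + 2cn - 6c - \tfrac52 n + 5$ is then a routine expansion.

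The hard part will be the cross-family intersection estimates together with the distinctness argument, both aggravated by non-orientability. On an orientable surface one can certify that two curves are in minimal position, and bound their intersection by parity, using the algebraic (signed, or mod-$2$) intersection pairing; here that tool is unavailable, so every potential bigon between a one-sided curve of Family $C$ and a curve of Family $A$ or $B$ must be excluded geometrically, and one must check that routing distinct loops through the same cross-cap does not make them coincide up to isotopy. I expect the most delicate case to be the interaction of the one-sided Family $C$ loops with the two-sided Family $B$ loops sharing a cross-cap, where a careless choice of routing produces a second crossing; resolving this should require choosing the tangencies at each cross-cap in a consistent cyclic order, exactly as the lasso control in \cref{lemma Lasso} anticipates.
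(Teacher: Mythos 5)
Your overall strategy (an explicit construction realising the three summands plus one distinguished loop) matches the paper's, but the construction you describe for the $\binom{c-1+n}{2}$ family does not work, and the fix is precisely the ingredient your write-up leaves out. The paper's proof of \cref{thm5} reuses the model of \cref{thm1}: one \emph{central} cross-cap in the middle, the other $c-1$ cross-caps and the $n$ boundary components arranged on a surrounding circle, and --- crucially --- \emph{every} loop of the system is routed through the central cross-cap. The four families are: the core of the central cross-cap (the $+1$); $(c-1)(c+n-2)$ loops through the central cross-cap, one outer cross-cap and one gap; $\binom{c-1}{2}$ loops through the central cross-cap and two outer cross-caps; and $\binom{c-1+n}{2}$ loops through the central cross-cap and two of the $c-1+n$ \emph{gaps} between the outer features. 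Because all loops share the central cross-cap, any two of them can be drawn to cross exactly once there (the antipodal identification permits a single transverse crossing), and the outer arms are routed around the circle so as to contribute no further intersections.

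Your Family $A$, by contrast, consists of two-sided ``chord-like'' loops, each enclosing a pair of features and passing through no cross-cap. Such a family is not a $1$-system: if $\gamma_{12}$ encloses features $1,2$ and $\gamma_{23}$ encloses features $2,3$, then $\gamma_{12}$ separates feature $2$ from feature $3$, so the two loops cannot be made disjoint, while their $\mathbb{Z}/2$-homology classes pair to zero, so their geometric intersection number is even; hence $i(\gamma_{12},\gamma_{23})\geq 2$, and the same obstruction applies to interleaved pairs. You correctly name the relevant mechanism (passing through a cross-cap converts a forced double intersection into a single crossing) but apply it only to Families $B$, $C$ and the $+1$ loop, reserving the central cross-cap for a single curve --- whereas it is exactly Family $A$ that must be threaded through that hub. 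Once every loop is rerouted through the central cross-cap as in the paper, the pairwise intersection bound becomes a routine inspection of the outer arms, and no appeal to \cref{lemma Lasso} or to a hyperbolic structure is needed for this lower bound.
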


    \begin{thm}
    \label{thm2}
    Let $F = N_{c,n} $. We have
        \begin{align}
        & \|\widehat{\mathscr{L}}(F,1)\|_\infty \leq
        \begin{cases}
            \tfrac{1}{2}n^2 -  \tfrac{1}{2}n +1 , & \text{\ if $c=1$\ } , \\
            2n^2 +n +2 , & \text{\ if $c=2$\ } , \\
            2|\chi|^2 + 2|\chi| + 1 , & \text{\ otherwise\ } . \\
        \end{cases}\notag
        \end{align}
    \end{thm}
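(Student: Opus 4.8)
The plan is to reduce the count of loops to the count of arcs and invoke the sharp arc bound of \cref{thm4} (more precisely its orientation-agnostic form \cref{thm4cor}). Given a complete $1$-system of loops $L=\{\gamma_0,\dots,\gamma_m\}$ on $F=N_{c,n}$, I would fix $\gamma_0$ and cut $F$ along it to obtain $F_0$. Completeness forces every $\gamma_i$ with $i\ge 1$ to meet $\gamma_0$ in exactly one point, so each $\gamma_i$ becomes an arc $\alpha_i$ on $F_0$ whose endpoints lie on the boundary created by $\gamma_0$ (two endpoints on a single new circle if $\gamma_0$ is one-sided, one endpoint on each of the two new circles if $\gamma_0$ is two-sided). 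The intersection point $\gamma_i\cap\gamma_j$ may be taken off $\gamma_0$, so the $\alpha_i$ pairwise meet exactly once and in particular form a $1$-system of arcs. Since cutting along a simple closed curve preserves the Euler characteristic, $\chi(F_0)=\chi(F)$, and treating each new boundary as a cusp, \cref{thm4cor} gives $m\le\|\mathscr{A}(F_0,1)\|_\infty=2|\chi|(|\chi|+1)$, whence $\#L\le 2|\chi|^2+2|\chi|+1$. This already settles the generic (``otherwise'') case for all $c\ge 3$, and yields a (non-sharp) bound for $c=1,2$.

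Two routine points support this reduction. First, I would check that $\gamma_i\mapsto\alpha_i$ is injective on isotopy classes and that each $\alpha_i$ is essential: an inessential or repeated arc would make $\gamma_i$ isotopic to $\gamma_0$, to some $\gamma_j$, or to a boundary, contradicting that the $\gamma_i$ are distinct essential loops meeting $\gamma_0$ once (in particular $\gamma_0$ is non-separating, since a separating curve meets every loop an even number of times). Second, the finitely many configurations in which $\chi(F_0)\ge 0$ must be treated by hand, but there the systems are trivially small.

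To sharpen the bound for $c=1,2$ I would use the mod-$2$ intersection form, whose rank on $H_1(N_{c,n};\mathbb{Z}/2)$ equals $c$ (its radical is generated by the boundary classes). The classes $x_i=[\gamma_i]$ satisfy $x_i\cdot x_j=1$ for $i\ne j$ and $x_i\cdot x_i=1$ or $0$ according as $\gamma_i$ is one- or two-sided, so their Gram matrix $M$ has all off-diagonal entries $1$ and $\rank_{\mathbb{Z}/2} M\le c$. For $c=1$ a symmetric matrix of rank $\le 1$ with unit off-diagonal entries must be the all-ones matrix, forcing every $\gamma_i$ to be one-sided (once $\#L\ge 2$); cutting along the one-sided $\gamma_0$ then lands in the planar surface $S_{0,n+1}$ with all $m$ arcs based at the single circle $\beta$ coming from $\gamma_0$. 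On a punctured sphere each such arc separates the $n$ remaining punctures into two sets, and I would show that the pairwise-exactly-once condition forces these separations to be pairwise ``crossing,'' yielding an injection of the arcs into the $\binom{n}{2}$ pairs of punctures; hence $\#L\le\binom{n}{2}+1=\tfrac12 n^2-\tfrac12 n+1$, matching the lower bound of \cref{cor1} and so sharp. For $c=2$ the rank-$\le 2$ condition instead forces all but at most one $\gamma_i$ to be one-sided (the all-two-sided case producing only $O(1)$ loops); cutting along a one-sided loop reduces to $N_{1,n+1}$ with all arcs confined to one boundary, and combining the boundary-arc combinatorics of the $c=1$ case with this rank restriction should trim the generic count by exactly the required $n-1$ to give $2n^2+n+2$.

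The main obstacle is the special-case combinatorics rather than the generic reduction, which is essentially automatic once \cref{thm4cor} is available. For $c=1$ the work is to put boundary-based arcs on a punctured sphere into a normal form and to prove that ``pairwise meeting exactly once'' is equivalent to a crossing condition on the induced puncture-partitions, so that the count is exactly $\binom{n}{2}$ rather than merely $O(n^2)$. For $c=2$ the delicate point is the interaction between the unique possible two-sided loop and the one-sided loops after cutting: recovering the precise constant $2n^2+n+2$, rather than the generic $2n^2+2n+1$, requires controlling how the single two-sided class and the restriction of all arc endpoints to one boundary cut down the admissible arc configurations, and this mixed analysis is where most of the effort will go.
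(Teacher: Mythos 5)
Your reduction for the generic case $c\ge 3$ (cut along one loop of $L$, observe that completeness turns the remaining loops into a $1$-system of arcs on a surface with the same Euler characteristic, and apply \cref{thm4}) is exactly the paper's argument, and your homological observation for $c=1$ --- that the mod-$2$ Gram matrix of the classes $[\gamma_i]$ has rank at most $1$, which for $\#L\ge 2$ forces every loop to be $1$-sided --- is a valid substitute for the paper's \cref{lem 4}. From that point on, however, there are genuine gaps. For $c=1$, after cutting along the $1$-sided $\gamma_0$ you land in $S_{0,n+1}$ with every arc having both endpoints at the single new puncture; the paper then simply invokes Przytycki's \cref{thm6} to get $\tfrac12|\chi(S_{0,n+1})|(|\chi(S_{0,n+1})|+1)=\binom{n}{2}$. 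Your proposed injection of the arcs into pairs of punctures via a ``crossing partitions'' normal form is left entirely unproved, and it is the whole content of the bound, so as written the $c=1$ case is not established (though it would follow at once from \cref{thm6}, which is already available in the paper).

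For $c=2$ the gaps are more serious. First, the rank-$\le 2$ condition does \emph{not} force all but one $\gamma_i$ to be $1$-sided: the $3\times 3$ symmetric matrix over $\mathbb{Z}/2$ with zero diagonal and all off-diagonal entries equal to $1$ has rank $2$ (its rows sum to zero), so arbitrarily many pairwise-once-intersecting $2$-sided classes are homologically consistent with $c=2$. The paper excludes a second $2$-sided loop by a topological argument instead: two $2$-sided loops meeting once have regular neighbourhood $S_{1,1}$, and an embedded $S_{1,1}$ with necessarily non-orientable complement forces at least three cross-caps. Second, and decisively, the device that produces the sharp constant $2n^2+n+2$ is absent from your plan: the paper cuts along a $1$-sided loop to reach $N_{1,n+1}$ and then passes to the orientation double cover $S_{0,2n+2}$, where each arc coming from a $1$-sided loop lifts to two arcs, each with both endpoints on a single lift of the cut boundary; applying \cref{thm6} to each of the two lifts gives $\#\widetilde{A_1}\le 2\cdot\tfrac12(2n)(2n+1)$, hence $\#A_1\le n(2n+1)=2n^2+n$, and adding the single possible $2$-sided loop and $\gamma$ itself yields $2n^2+n+2$. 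Your statement that the rank restriction ``should trim the generic count by exactly the required $n-1$'' is a placeholder for precisely this double-cover computation, which is the one nontrivial new idea in the $c=2$ case.
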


    Combining \cref{thm2} with \cref{thm1}, which provides a quadratic lower bound, we obtain the exact count in the case of the punctured projective plane.

    \begin{cor}
    \label{cor2}
        Given a $n$-punctured projective plane $N_{1,n} $, we have
        \begin{align}
        \|\widehat{\mathscr{L}}(N_{1,n},1)\|_\infty = \tfrac{1}{2}n^2 -  \tfrac{1}{2}n +1.\notag
        \end{align}
    \end{cor}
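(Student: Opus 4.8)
The plan is to produce matching upper and lower bounds for $\|\widehat{\mathscr{L}}(N_{1,n},1)\|_\infty$ by specializing the two quadratic estimates already in hand to the single–cross-cap case $c=1$, and then to observe that they coincide. Since all of the topological and combinatorial work has been front-loaded into \cref{thm1} and \cref{thm2}, the corollary is essentially a matter of substitution and one arithmetic check; I would present it as such rather than reproving anything.

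For the upper bound, I would invoke \cref{thm2} in its first case. Setting $c=1$ there gives directly
\[
\|\widehat{\mathscr{L}}(N_{1,n},1)\|_\infty \leq \tfrac{1}{2}n^2 - \tfrac{1}{2}n + 1.
\]
For the lower bound, I would appeal to \cref{cor1}, which specializes \cref{thm1}, and substitute $c=1$ into $\binom{c-1+n}{2}+c$. Because this expression collapses to $\binom{n}{2}+1$ when $c=1$, I obtain
\[
\|\widehat{\mathscr{L}}(N_{1,n},1)\|_\infty \geq \binom{n}{2} + 1 = \tfrac{1}{2}n(n-1) + 1 = \tfrac{1}{2}n^2 - \tfrac{1}{2}n + 1.
\]
The only computation to verify is the identity $\binom{n}{2}+1 = \tfrac{1}{2}n^2 - \tfrac{1}{2}n + 1$, which is immediate from $\binom{n}{2} = \tfrac{1}{2}(n^2-n)$.

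The final step is simply to note that the upper and lower bounds agree, which forces equality and yields the stated exact count. There is no genuine obstacle in the corollary itself: the content lives entirely in the construction realizing \cref{thm1} and in the counting argument behind \cref{thm2}, and the corollary is the payoff where these two independently-derived quadratic bounds happen to meet. In the write-up I would stress that this sharpness is special to the case $c=1$, since for $c \geq 2$ the bounds of \cref{thm2} and \cref{cor1} no longer match and leave a gap; it is precisely the single cross-cap that pins the value down exactly.
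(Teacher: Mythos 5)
Your proposal is correct and is exactly the route the paper takes: \cref{cor2} is obtained by matching the $c=1$ case of the upper bound in \cref{thm2} against the lower bound $\binom{c-1+n}{2}+c$ from \cref{cor1} (itself the $t=c-1$ specialization of \cref{thm1}), and the two quadratics coincide at $\tfrac{1}{2}n^2-\tfrac{1}{2}n+1$. The only caveat worth a parenthetical in the write-up is that \cref{thm1} assumes $|\chi(F)|>0$, so for $n=1$ (the M\"obius strip, $\chi=0$) the lower bound must instead come from the paper's direct computation $\|\widehat{\mathscr{L}}(N_{1,1},1)\|_\infty=1$, which still agrees with the formula.
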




\section{Background}\label{section2}

    \subsection{Surfaces and Curves}

    \begin{figure}[H]
    \centering
    \includegraphics[width=1.0\textwidth]{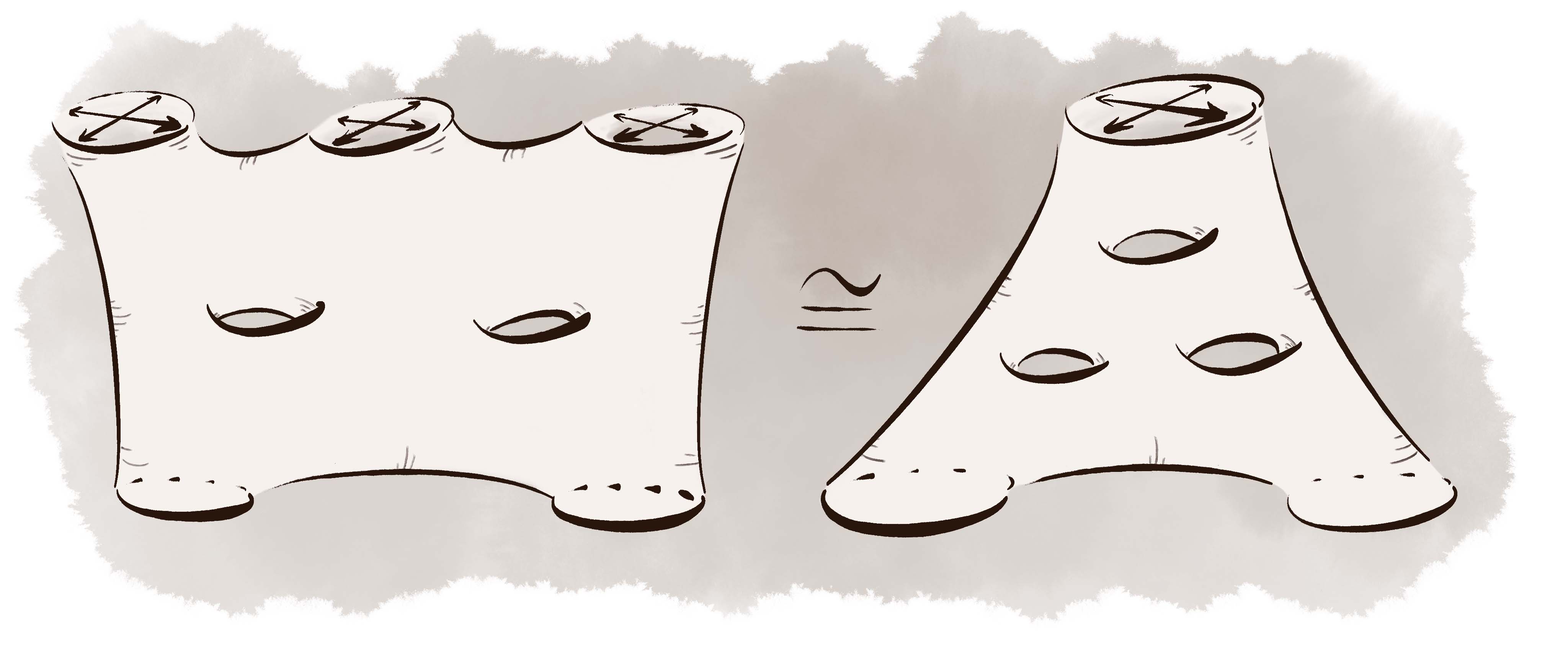}
    \caption{Two depictions of $N_{7,2}$. The $\bigotimes$ in both pictures represents a cross-cap (i.e. an $\mathbb{S}^1$ with antipodal points identified).}
    \label{TwoCrossCaps}
    \end{figure}


    \begin{defi}[Simple curves]

    We refer to both loops and arcs as curves and will often conflate a curve with its image. A \emph{simple loop} on a surface $F$ is defined by an embedding map $\gamma : \mathbb{S}^1 \hookrightarrow F$. If $F$ is a surface with non-empty boundary $\partial F$, then we define a \emph{simple arc} on $F$ as an embedding $\alpha : [0,1] \hookrightarrow F$ such that $\alpha (\partial[0,1]) \subset \partial F$.

    \end{defi}

    \begin{nota}
        We let $\gamma$ represent a loop, let $\alpha$ represent an arc and let the symbol $\beta$ represent a curve. Let $B = \mathbb{S}^1$ if $\beta$ is a simple loop and $B = [0,1]$ if $\beta$ is a simple arc.
    \end{nota}

    \begin{defi}[Regular neighbourhoods of curves]
    \label{defi RN}

    Let $\Omega$ be a finite collection of curves on $F$. A regular neighbourhood $W(\Omega)$ of $\Omega$ is a locally flat, closed subsurface of $F$ containing $\cup_{\beta \in \Omega} \beta$ such that there is a strong deformation retraction $H: W(\Omega) \times I \rightarrow W(\Omega)$ onto $\cup_{\beta \in \Omega} \beta$ where $H\vert_{(W(\Omega) \cap {\partial F}) \times I}$ is a strong deformation retraction onto $\cup_{\beta \in \Omega}   \beta \cap {\partial F}$.
    
    \end{defi}

    \begin{defi}[Free isotopies]
    
    Let $\beta_0 , \beta_1$ be two curves. We say $\beta_0$ is \emph{freely isotopic} to $\beta_1$, if there is a family of curves $\left\{ \beta(t,\cdot) \middle| t \in [0,1] \right\}$ such that $\beta(0,\cdot) = \beta_0 , \beta(1,\cdot) = \beta_1$ and $\beta(\cdot,\cdot): [0,1] \times B \rightarrow F$ is a continuous map. We denote the \emph{free isotopy class} by $H := [\beta_0] = [\beta_1]$.

    \end{defi}

    \begin{defi}[Essential curves]

    We say that a simple loop is \emph{essential} if it is not homotopically trivial, cannot be isotoped into any boundary component, and is primitive. We say that a simple arc is essential if it is not isotopic to any arc in any boundary component.

    \end{defi}

    \begin{defi}[$1$-sided vs. $2$-sided loops]
    
    We say that a simple loop $\gamma$ is \emph{$1$-sided} if the regular neighbourhood of $\gamma$ is homeomorphic to a Möbius strip. We say that $\gamma$ is \emph{$2$-sided} if the regular neighbourhood of $\gamma$ is homeomorphic to an annulus.

    Since the number of boundary components of the regular neighbourhood of a loop does not change under isotopy, $1$-sidedness and $2$-sidedness are well-defined for free isotopy classes.

    \begin{figure}[H]
    \centering
    \includegraphics[width=1.0\textwidth]{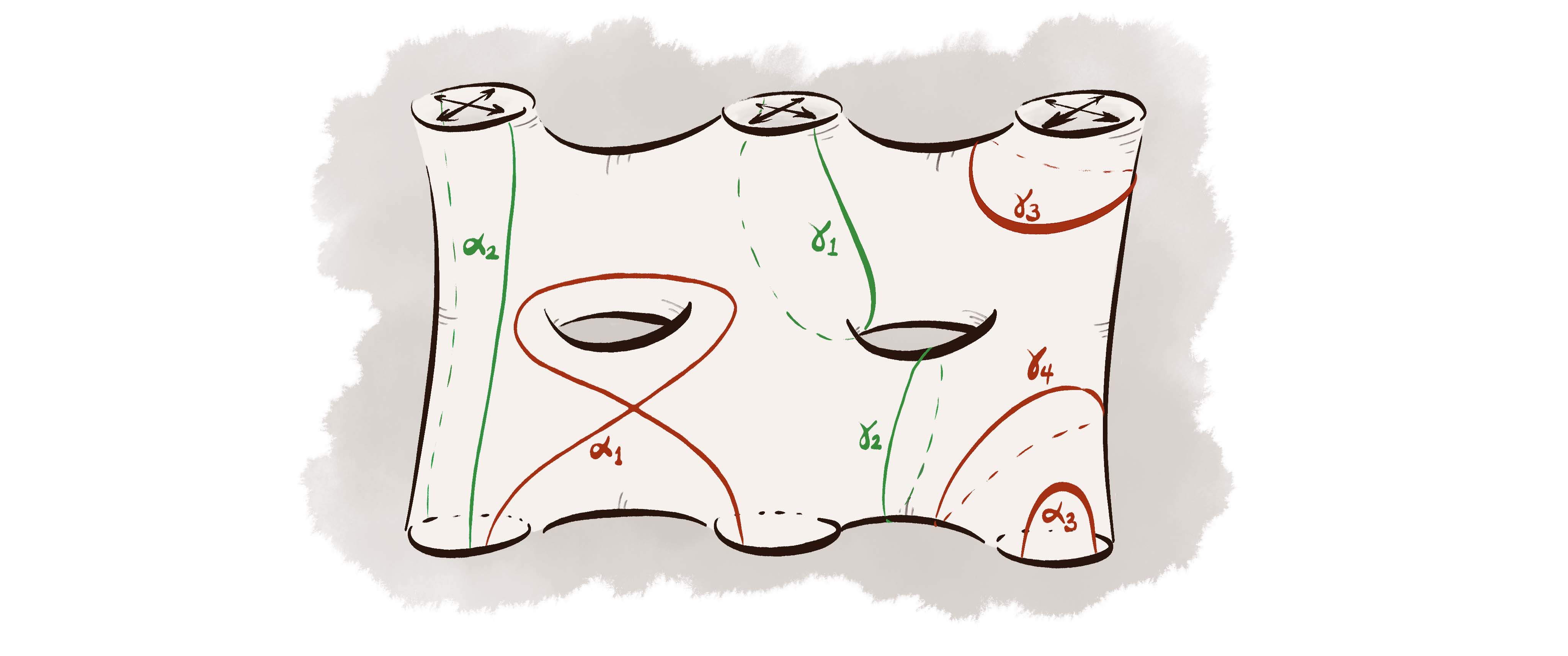}
    \caption{Here are some examples of simple essential curves (in green) and non-simple or non-essential curves (in red) on $N_{7,3}$. The curve $\alpha_1$ is a non-simple arc, $\alpha_2$ is an essential simple arc, $\gamma_1$ is an essential $1$-sided simple loop, $\gamma_2$ is a essential $2$-sided simple loop, $\alpha_3$ is a non-essential simple arc and $\gamma_3, \gamma_4$ are two non-essential simple loops, where $\gamma_3$ is non-primitive.}
    \label{CurvesExamples}
    \end{figure}
        
    \end{defi}

    \subsection{Systems of Curves}
    
    \begin{defi}[Geometric intersection numbers]

        Given two curves $\beta_1: B_1 \rightarrow F$, $\beta_2: B_2 \rightarrow F$, we define the \emph{geometric intersection number} $i (\beta_1 , \beta_2)$ as
        \begin{align}
            i (\beta_1 , \beta_2) := \# \left\{ (b_1, b_2) \in B_1 \times B_2 \middle| \beta_1(b_1) = \beta_2(b_2) \right\} . \notag
        \end{align}
    
        Consider two free isotopy classes (not necessarily distinct) $[\beta_1] , [\beta_2]$ of two simple curves $\beta_1, \beta_2$ on $F$. We define the \emph{geometric intersection number} of $[\beta_1]$ and $[\beta_2]$ as follows:
        \begin{align}
            i ([\beta_1] , [\beta_2]) := \min \left\{ i (\bar{\beta}_1 , \bar{\beta}_2) \middle|  \bar{\beta}_1 \in [\beta_1] , \bar{\beta}_2 \in [\beta_2] \right\}. \notag
        \end{align}
            
        \end{defi}
    
    \begin{rmk}
        We say two curves are \emph{transverse} if they intersect transversely at all intersection points. 
    \end{rmk}

    \begin{defi}[Systems of curves]
    
    A \emph{system} $\Omega$ of curves on $F$ is either a collection $\Omega = L$ of simple loops or a collection $\Omega = A$ of simple arcs, such that the curves in $\Omega$ are essential, any two distinct curves $\beta_1, \beta_2$ are transverse, non-isotopic, and are in minimal position (that is $i(\beta_1, \beta_2) = i([\beta_1], [\beta_2])$).
    
    \end{defi}

    \begin{defi}[$k$-systems of curves]
    
    We call a system of curves a \emph{$k$-system of curves} on $F$ if the geometric intersection number of any pair of elements in the system is at most $k$. Let $\mathscr{L}(F,k)$ be the collection of all $k$-systems of loops on $F$, and let $\mathscr{A}(F,k)$ be the collection of all $k$-systems of arcs on $F$.
    
    \end{defi}

    \begin{defi}[Complete $k$-systems of curves]
    
    We call a system of curves a \emph{complete $k$-system of curves} on $F$ if the geometric intersection number of any pair of elements in the system is exactly $k$. We let $\widehat{\mathscr{L}}(F,k)$ or $\widehat{\mathscr{A}}(F,k)$ be the collection of all complete $k$-systems of loops or of arcs on $F$.
    
    \end{defi}

    \begin{defi}[Equivalent systems]
    
    Let $\Omega = \{\beta_i\}_{i \in \mathscr{I}}, \bar{\Omega} = \{ \bar{\beta}_i \}_{i \in \bar{\mathscr{I}}}$ be respective systems of curves on $F$. We say $\Omega$ and $\bar{\Omega}$ are \emph{equivalent} if there is a automorphism $\varphi : F \rightarrow F$ and a bijection $\psi : \mathscr{I} \rightarrow \bar{\mathscr{I}}$ such that $\varphi \circ \beta_i$ is isotopic to $\bar{\beta}_{\psi(i)}$ for every $i \in \mathscr{I}$. In particular, we say $\Omega$ and $\bar{\Omega}$ are \emph{isotopic} if there is a bijection $\psi : \mathscr{I} \rightarrow \bar{\mathscr{I}}$ such that $\beta_i$ is isotopic to $\bar{\beta}_{\psi(i)}$ for every $i \in \mathscr{I}$.

    
    \end{defi}

    


    \cref{SystemEquivalence} is an example of two equivalent complete $1$-systems of loops on $S_{2,0}$.

    \begin{figure}[H]
    \centering
    \includegraphics[width=1.0\textwidth]{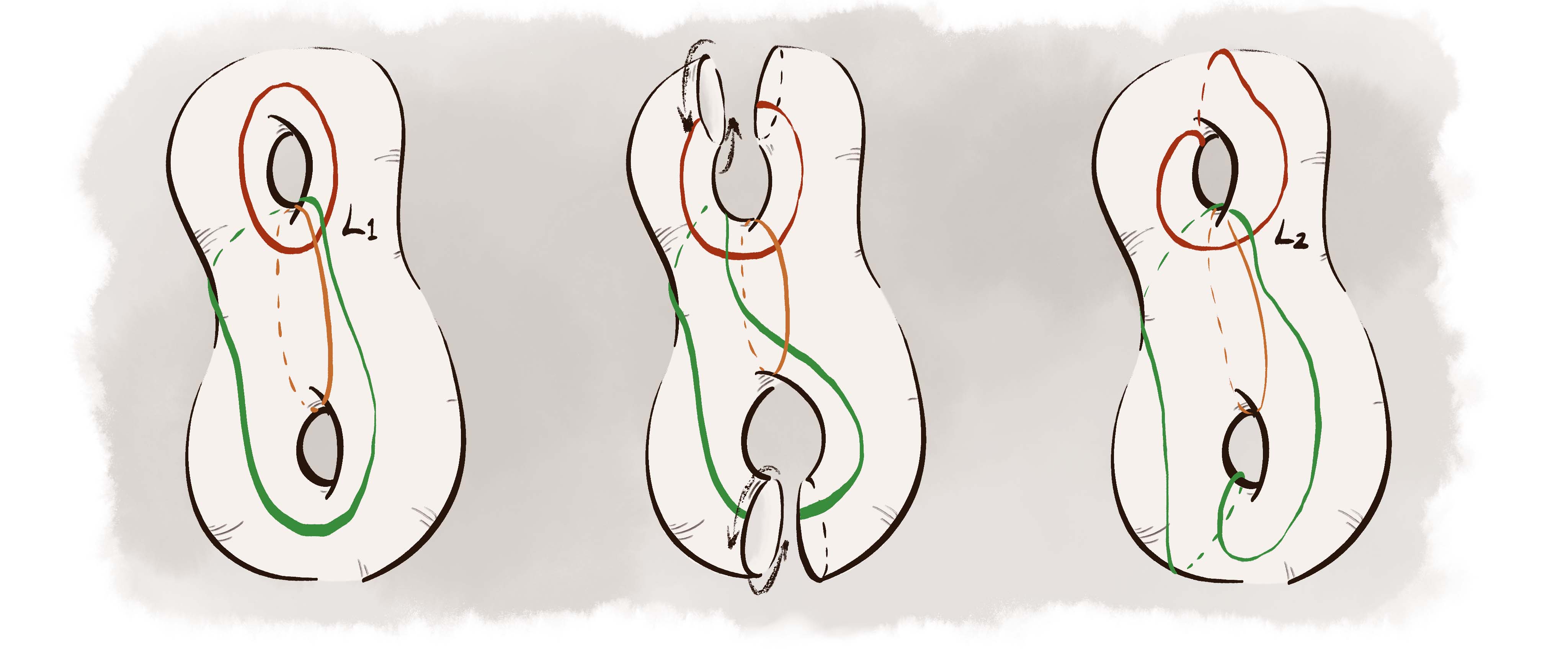}
    \caption{$L_1$ and $L_2$ are two equivalent complete $1$-systems of loops on $S_{2,0}$.}
    \label{SystemEquivalence}
    \end{figure}

    \begin{defi}[(Complete) $k$-curve complexes]
    \label{k complex}
        \ \ 
        \begin{itemize}
        \item The \emph{$k$-curve complex} $\EuScript{C}_k(F)$ is a simplicial complex defined as follows. The $0$-simplices are free isotopy class of simple loops, and $\EuScript{C}_k(F)$ has an $n$-simplex ($n \geq 1$) for every collection of $n+1$ distinct free isotopy classes of simple loops that pairwise intersect at most $k$ times.
        \item The \emph{complete $k$-curve complex} $\widehat{\EuScript{C}}_k(F)$ is a simplicial complex by defining a $0$-simplex as a free isotopy class of simple loops and an $n$-simplex ($n \geq 1$) as a collection of $n$ distinct free isotopy classes of simple loops that pairwise intersect exactly $k$ times.
        \end{itemize}
    \end{defi}

    \begin{defi}[(Complete) $k$-arc complexes]
    \label{complete k complex}
    \ \ 
    \begin{itemize}
        \item The \emph{$k$-arc complex} $\EuScript{A}_k(F)$ is a simplicial complex by defining a $0$-simplex as a free isotopy class of simple arcs and an $n$-simplex ($n \geq 1$) as a collection of $n$ distinct free isotopy classes of simple loops that pairwise intersect at most $k$ times.
        \item The \emph{complete $k$-arc complex} $\widehat{\EuScript{A}}_k(F)$ is a simplicial complex by defining a $0$-simplex as a free isotopy class of simple arcs and an $n$-simplex ($n \geq 1$) as a collection of $n$ distinct free isotopy classes of simple loops that pairwise intersect exactly $k$ times.
    \end{itemize}
\end{defi}

The following is clear:

    \begin{pro}
    $\widehat{\EuScript{C}}_k(F)$ is a subcomplex of $\EuScript{C}_k(F)$, and $\widehat{\EuScript{A}}_k(F)$ is a subcomplex of $\EuScript{A}_k(F)$.
    \end{pro}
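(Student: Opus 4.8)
The plan is to verify directly the two defining properties of a subcomplex, namely that (i) every simplex of the complete complex is already a simplex of the ambient complex, and (ii) the complete complex is closed under passing to faces. Both reduce to a single elementary observation: the condition ``pairwise intersection \emph{exactly} $k$'' is simultaneously a strengthening of ``pairwise intersection \emph{at most} $k$'' and hereditary under taking subcollections. I would also first note that the two complexes in each pair share the same vertex set (free isotopy classes of simple loops for the curve complexes, of simple arcs for the arc complexes), so that ``sub-collection'' corresponds literally to ``face''; this is immediate from comparing the two definitions.

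For the inclusion, I would take an arbitrary simplex $\sigma$ of $\widehat{\EuScript{C}}_k(F)$, corresponding to a finite collection $\{[\gamma_0],\dots,[\gamma_n]\}$ of distinct free isotopy classes of simple loops with $i([\gamma_a],[\gamma_b]) = k$ for all $a \neq b$. Since $i([\gamma_a],[\gamma_b]) = k$ in particular gives $i([\gamma_a],[\gamma_b]) \leq k$, the same collection satisfies the defining condition of a simplex of $\EuScript{C}_k(F)$. Hence every simplex of $\widehat{\EuScript{C}}_k(F)$ is a simplex of $\EuScript{C}_k(F)$.

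For closure under faces, I would let $\tau \subseteq \sigma$ be any face, corresponding to a subcollection $\{[\gamma_{a_0}],\dots,[\gamma_{a_m}]\} \subseteq \{[\gamma_0],\dots,[\gamma_n]\}$. Every pair occurring in this subcollection is already a pair in the original collection, and therefore still intersects exactly $k$ times; thus $\tau$ is itself a simplex of $\widehat{\EuScript{C}}_k(F)$. Combined with the previous paragraph, this shows that $\widehat{\EuScript{C}}_k(F)$ is a subcomplex of $\EuScript{C}_k(F)$.

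Finally, I would observe that the argument for the arc complexes is identical after replacing loops $\gamma$ by arcs $\alpha$ and the complexes $\EuScript{C}_k, \widehat{\EuScript{C}}_k$ by $\EuScript{A}_k, \widehat{\EuScript{A}}_k$: no step uses any feature specific to loops, only the two facts that ``exactly $k$'' implies ``at most $k$'' and that a pairwise condition restricts to subcollections. There is no genuine obstacle here; the only point requiring a moment's care is confirming that both complexes in a pair are built on a common vertex set, which is exactly why taking a face of a complete simplex again lands in the complete complex. This is the sense in which the statement is ``clear''.
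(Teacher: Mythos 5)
Your argument is correct and is exactly the verification the paper leaves implicit: the paper simply asserts the proposition as ``clear,'' and your two observations --- that ``exactly $k$'' implies ``at most $k$'' and that a pairwise condition is inherited by subcollections, over a common vertex set --- are precisely why. No further comment is needed.
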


    \begin{rmk}
        In \cite{zbMATH07704564}, the authors defined the $k$-curve graph, which is the $1$-skeleton of the $k$-curve complex from the \cref{k complex}.
    \end{rmk}

\section{Complete $1$-Systems of Loops On Orientable Surfaces}\label{section3}

    Justin Malestein, Igor Rivin, and Louis Theran \cite{malestein2014topological} showed the cardinality for maximal complete $1$-systems of loops on the orientable closed surface of genus $g \geq 1$ is $2g + 1$. To extend this result to compact orientable surfaces with non-empty boundary, we use the following lemma:

    \begin{lem}
    \label{lem0}

    If $\gamma_1, \gamma_2$ are two essential loops on a surface $F$ that transversely intersect once, then they are in minimal position. Moreover, if $\Omega$ is a collection of essential loops on an orientable surface $S$ such that every pair of loops in $\Omega$ transversely intersects once, then $\Omega$ is a complete $1$-system of loops. 
        
    \begin{proof}[Proof.]
    It suffices to prove that $i (\gamma_1,\gamma_2) = 1$ implies $i ([\gamma_1], [\gamma_2]) = 1$. Since $i (\gamma_1,\gamma_2) \geq i ([\gamma_1], [\gamma_2])$, it suffices to prove $i ([\gamma_1], [\gamma_2]) \neq 0$. If $i ([\gamma_1], [\gamma_2]) = 0$, then $\gamma_1,\gamma_2$ are not in minimal position by definition. They bound a bigon, and $\gamma_1,\gamma_2$ intersect at least two times, which is a contradicts the bigon criterion \cite{MR2850125}.

    On an orientable surface $S$, for each pair $\gamma_1, \gamma_2$ of loops which are isotopic, we have $i ([\gamma_1], [\gamma_2]) = 0$, because we can isotop these two loops to parallel positions, such that they become the two boundaries of an annulus. Since each pair $\gamma_1, \gamma_2$ in $\Omega$ has $i ([\gamma_1], [\gamma_2]) = 1$, they are not isotopic. By definition, $\Omega$ is a complete $1$-system of loops.
    \end{proof}
    \end{lem}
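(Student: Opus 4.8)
The plan is to verify the two assertions separately, checking the defining conditions of a complete $1$-system directly. For the first assertion I would reduce the claim to a single inequality about geometric intersection numbers, and for the second I would additionally rule out that two loops of $\Omega$ are isotopic, which is where I expect orientability to be indispensable.

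For the first assertion, recall that $\gamma_1, \gamma_2$ being in minimal position means $i(\gamma_1, \gamma_2) = i([\gamma_1], [\gamma_2])$. Since minimizing over the isotopy classes can only decrease the count, we automatically have $i([\gamma_1], [\gamma_2]) \le i(\gamma_1, \gamma_2) = 1$, so it suffices to show $i([\gamma_1], [\gamma_2]) \ge 1$, i.e. that the two classes cannot be made disjoint. I would invoke the bigon criterion \cite{MR2850125}: two transverse simple loops fail to be in minimal position precisely when they cut out a bigon, and a bigon is bounded by (at least) two intersection points. As $\gamma_1$ and $\gamma_2$ meet in a single point, no bigon can form, so they are already in minimal position and $i([\gamma_1], [\gamma_2]) = 1$. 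A parity argument via the mod-$2$ intersection pairing would give $i([\gamma_1],[\gamma_2])$ odd, hence nonzero, as an alternative that also works on non-orientable $F$.

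For the second assertion, let $\Omega$ be as hypothesized on an orientable surface $S$. The loops are essential and pairwise transverse by assumption, and the first part shows every pair is in minimal position with geometric intersection number exactly $1$; it remains only to check that distinct loops of $\Omega$ lie in distinct free isotopy classes. Here I would use that on an orientable surface every simple loop is $2$-sided, so it has an annular regular neighbourhood and can be pushed to a disjoint parallel copy in its own isotopy class; consequently any two isotopic simple loops realize geometric intersection number $0$. If two members $\gamma_1, \gamma_2$ of $\Omega$ were isotopic, then $i([\gamma_1],[\gamma_2]) = 0$, contradicting the value $1$ just established. Hence the loops are pairwise non-isotopic, and all the conditions in the definition of a complete $1$-system are satisfied.

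The step I expect to be the genuine obstacle — and the reason the ``moreover'' clause restricts to orientable $S$ — is the non-isotopy argument. It hinges on producing a disjoint parallel copy of a loop within its own isotopy class, which is available only for $2$-sided loops; a $1$-sided loop on a non-orientable surface has a M\"obius-band neighbourhood whose boundary is not isotopic to the core, so the clean ``isotopic implies intersection $0$'' dichotomy breaks down. I would also take care that the bigon criterion I cite is stated in the generality I need, since the first assertion is claimed for an arbitrary surface $F$, including the non-orientable case.
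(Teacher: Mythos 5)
Your proposal is correct and follows essentially the same route as the paper: reduce the first claim to showing $i([\gamma_1],[\gamma_2])\neq 0$ via the bigon criterion (a bigon needs two intersection points, which a single transverse intersection cannot supply), and for the second claim rule out isotopic pairs in $\Omega$ by pushing a $2$-sided loop off itself to a disjoint parallel copy, which forces $i([\gamma_1],[\gamma_2])=0$ and contradicts the established value $1$. Your added remarks — the mod-$2$ parity alternative and the explicit identification of $1$-sidedness as the obstruction on non-orientable surfaces — are sound but do not change the argument.
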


    \begin{pro}
    \label{pro1}
    Given $S_{g,n}$ with $g \geq 1$ or $n \geq 4$, then
    \begin{align}
    \|\widehat{\mathscr{L}}(S_{g,n},1)\|_\infty = 2g + 1 \text{.}\notag
    \end{align}
    
    \begin{proof}[Proof.]

    We first consider the case when $g=0$. If $n \leq 3$, then none of the simple loops on the surface are essential. If $n \geq 4$, then there exist essential simple loops but all of them are separating. Since any loop intersects a separating loop an even number of times, any maximal complete $1$-system of loops in $S_{0,n}$ contains only one loop. This takes care of the $S_{0,n}$ case. 

    When $g \geq 1$, we show that $\|\widehat{\mathscr{L}}(S_{g,n},1)\|_\infty \geq \|\widehat{\mathscr{L}}(S_{g,0},1)\|_\infty$. Let $L$ be a complete $1$-system of loops on $S_{g,0}$. Remove $n$ disjoint disks from $S_{g,0}$ while avoiding every $\gamma$ in $L$. The resulting loops $\bar{L}$ on $S_{g,n}$ remain transverse and pairwisely intersect once. Moreover, every loop $\bar{\gamma}$ in $\bar{L}$ is essential. If not, then $\bar{\gamma}$ will be isotopic to the boundary of a disk we have removed, and the original loop $\gamma$ then bounds a disk, so $\gamma$ is nullhomotopic, which contradicts $\gamma$ being in a $1$-system of loops. By \cref{lem0}, $\bar{L}$ is a complete $1$-system of loops on $S_{g,n}$.
    
    It remains to show that, when $g \geq 1$, $\|\widehat{\mathscr{L}}(S_{g,n},1)\|_\infty \leq \|\widehat{\mathscr{L}}(S_{g,0},1)\|_\infty$. Let $\bar{L}$ be a complete $1$-system of loops on $S_{g,n}$, whose cardinality is bigger than $1$. Fill all boundaries of $S_{g,n}$ with disks. The resulting loops $L$ on $S_{g,0}$ are still transverse and pairwisely intersect once. Moreover, every loop $\gamma$ in $L$ is essential. If not, then $\gamma$ is trivial, which means the original loop $\bar{\gamma}$ either is isotopic to a boundary of the surface $S_{g,n}$ or bounds multiple boundaries of $S_{g,n}$. Thus, $\bar{\gamma}$ is either essential or separating, which is incompatible with $\bar{\gamma}$ being in a $1$-system of loops whose cardinality is bigger than $1$. By \cref{lem0}, $L$ is a complete $1$-system of loops on $S_{g,0}$.
    
    \end{proof}
    \end{pro}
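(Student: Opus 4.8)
The plan is to reduce everything to the closed case, where Malestein--Rivin--Theran have already shown $\|\widehat{\mathscr{L}}(S_{g,0},1)\|_\infty = 2g+1$ for $g \geq 1$. First I would dispose of the genus-zero case $g=0,\ n \geq 4$ by a parity argument. On a punctured sphere every essential simple loop is separating, so it meets any other simple loop an even number of times; in particular no two such loops can meet exactly once. Hence any complete $1$-system on $S_{0,n}$ has at most one loop, giving the value $1 = 2\cdot 0 + 1$ and matching the claimed formula.

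For $g \geq 1$ the strategy is a two-sided squeeze between $S_{g,n}$ and the closed surface $S_{g,0}$. For the lower bound $\|\widehat{\mathscr{L}}(S_{g,n},1)\|_\infty \geq 2g+1$, I would start from an optimal complete $1$-system $L$ on $S_{g,0}$ and delete $n$ pairwise-disjoint open disks chosen to avoid every loop of $L$. The loops survive untouched, remain transverse, and still intersect pairwise exactly once; the only thing to verify is that each resulting loop stays essential on $S_{g,n}$. A loop could only fail to be essential by becoming isotopic to one of the new boundary circles, but that would force the original loop to bound a disk in $S_{g,0}$, contradicting its essentiality there. \cref{lem0} then promotes ``pairwise meeting once'' to a genuine complete $1$-system, so the deleted-disk system witnesses the bound.

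For the upper bound $\|\widehat{\mathscr{L}}(S_{g,n},1)\|_\infty \leq 2g+1$ I would run the reverse move: take an optimal complete $1$-system $\bar L$ on $S_{g,n}$ of cardinality at least $2$ and cap every boundary component with a disk to obtain $S_{g,0}$. Again transversality and the pairwise intersection number $1$ are preserved, and the crux is essentiality after capping. If some capped loop $\gamma$ bounded a disk in $S_{g,0}$, then before capping the original $\bar\gamma$ was either boundary-parallel or separating in $S_{g,n}$; either way it would meet the other loops an even number of times. Since the system has more than one loop and is complete with $k=1$, this parity obstruction is incompatible with every pair meeting exactly once. Thus all capped loops are essential, and \cref{lem0} again yields a complete $1$-system on $S_{g,0}$, forcing the cardinality to be at most $2g+1$.

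The main obstacle is precisely this upper-bound direction: one must rule out that capping disks collapses an essential loop to a trivial one, and the clean way to do so is the parity observation that separating and boundary-parallel loops have even (indeed, $0$ in the boundary-parallel case) intersection with everything, which clashes with the ``exactly once'' completeness hypothesis. Once \cref{lem0} and this parity argument are in hand, both inequalities close and the value $2g+1$ is transported intact from $S_{g,0}$ to every $S_{g,n}$ with $g \geq 1$.
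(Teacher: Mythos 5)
Your proposal is correct and follows essentially the same route as the paper: the parity argument for $S_{0,n}$, the disk-removal construction for the lower bound, the capping argument (ruling out boundary-parallel or separating loops via the completeness hypothesis) for the upper bound, and \cref{lem0} to certify both resulting collections as complete $1$-systems. No substantive differences.
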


    \begin{rmk}
        \label{rmk1}
        As a intuitive explanation, we can consider the regular neighbourhood of a pair of transverse loops intersecting once, which is always a punctured torus (\cref{TwoTwoSided}). Hence the surface $S_{g,n}$ is homeomorphic to connected sum of a torus containing those two loops and a $S_{g-1,n}$ (\cref{TwoTwoSided}). One sees that the regular neighbourhood of those two loops remain unchanged even when we fill all boundaries of $S_{g,n}$.

        \begin{figure}[H]
            \centering
            \includegraphics[width=1.0\textwidth]{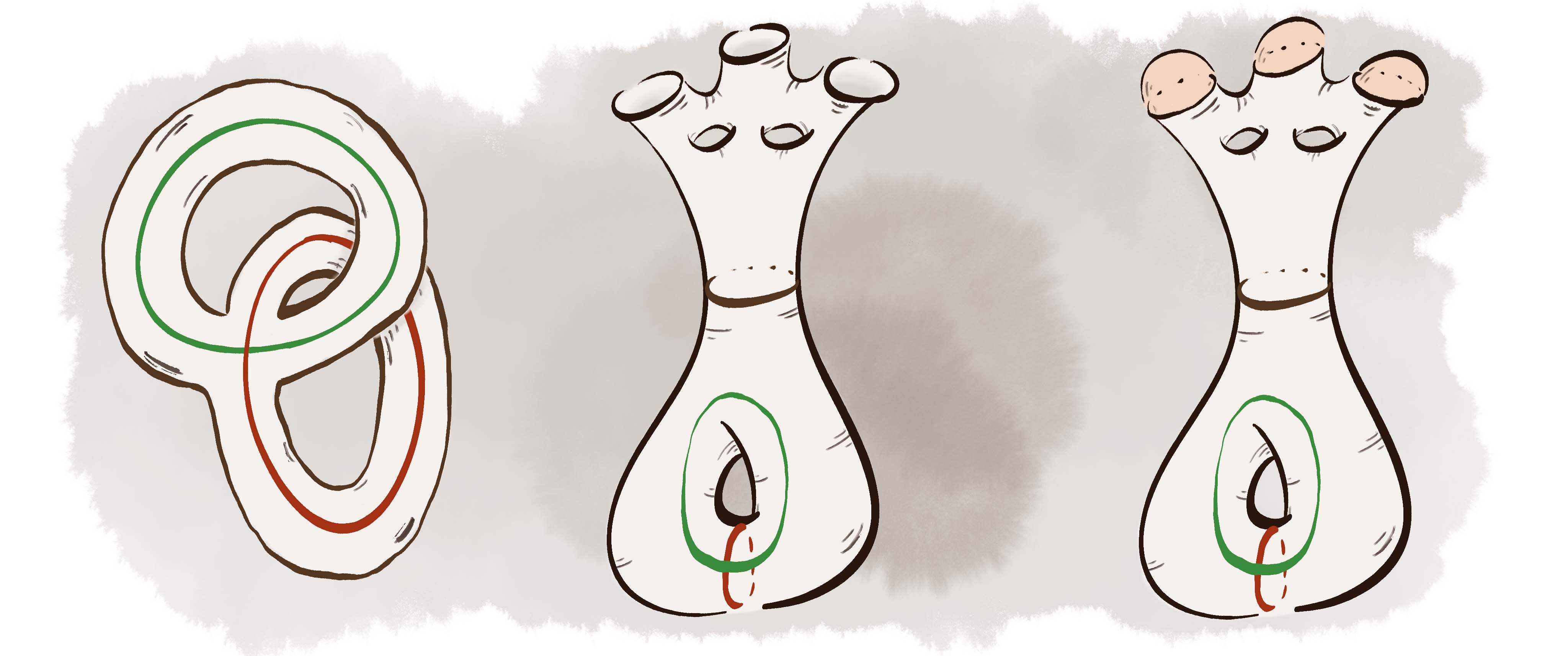}
            \caption{The regular neighbourhood of two $2$-sided loops intersecting once.}
            \label{TwoTwoSided}
            \end{figure}

        \end{rmk}

    \begin{figure}[H]
        \centering
        \includegraphics[width=1.0\textwidth]{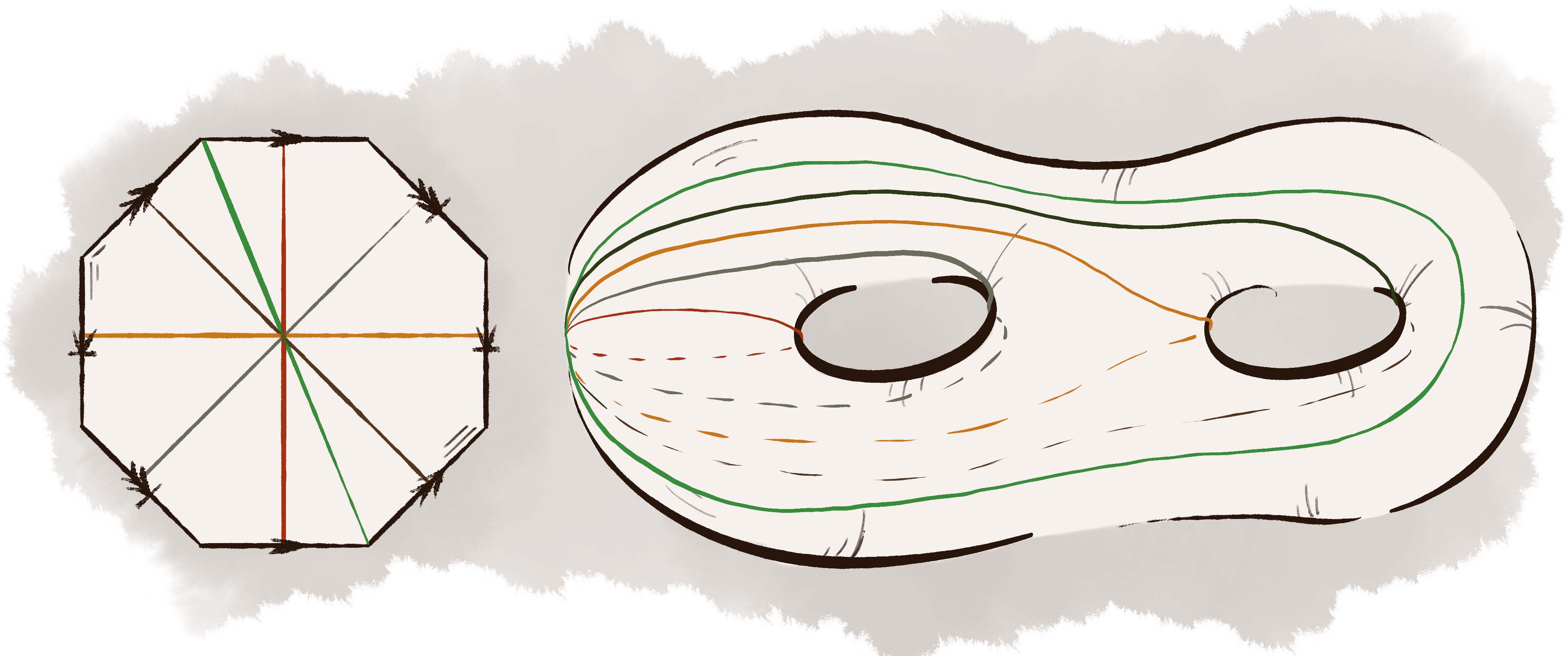}
        \caption{A maximal complete $1$-system of loops on $S_{2,0}$.}
        \label{MRT Construction}
        \end{figure}

    \begin{rmk}
        In \cite{malestein2014topological}, the authors construct one possible maximal complete $1$-system of loops for $S_{g,0}$. The authors regard $S_{g,0}$ as a $4g$-polygon with opposite sides identified. There are $2g$ simple loops connecting opposite sides of the polygon, and one simple loop connecting the diagonals of the polygon. For example, \cref{MRT Construction} shows their construction on $S_{2,0}$. We connect sum a $n$-punctured sphere with the closed surface, on which there is a maximal complete $1$-system of loops based on Malestein--Rivin--Theran's construction \cite{malestein2014topological}, to obtain a construction of maximal complete $1$-systems of loops on $S_{g,n}$ (see \cref{MaxCompleteOneSystemOrientable}). In fact, The proof of \cref{pro1} suggests more constructions of maximal complete $1$-systems of loops on $S_{g,n}$. \cref{MaxCompleteOneSystemOrientable} shows one possible maximal complete $1$-system of loops for $S_{2,4}$.
    \end{rmk}

    \begin{figure}[H]
        \centering
        \includegraphics[width=1.0\textwidth]{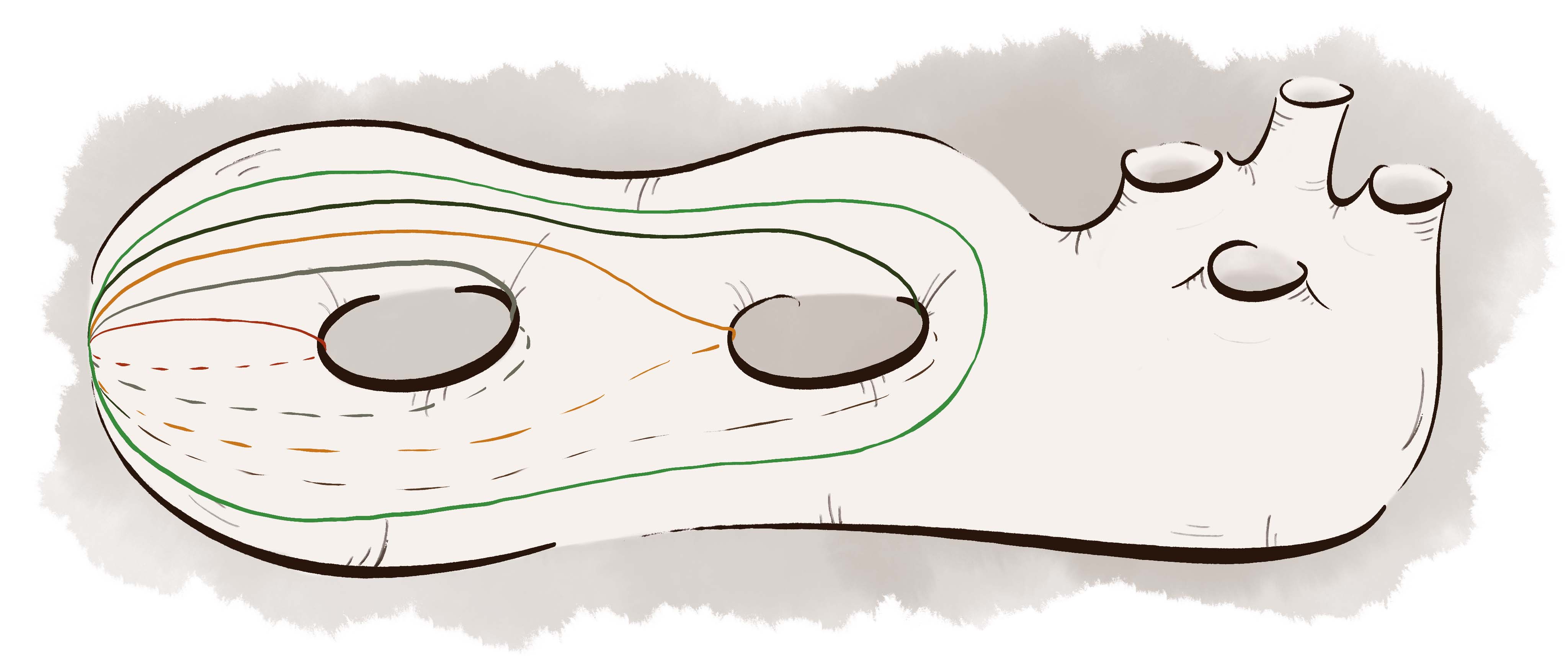}
        \caption{A maximal complete $1$-system of loops on $S_{2,4}$.}
        \label{MaxCompleteOneSystemOrientable}
        \end{figure}

\section{$1$-Systems of Arcs}\label{section4}

    Harer defined arc complexes \cite{zbMATH03951591}, which Masur-Schleimer showed are also $\delta$-hyperbolic \cite{zbMATH06168116}.
    
    In \cite{przytycki2015arcs}, Przytycki showed that the exact cardinality of maximal $1$-systems of arcs on an orientable hyperbolic surface $F = S_{g,n}$, is $2|\chi(F)|(|\chi(F)|+1)$. The main goal of this section is to extend Przytycki's theorem to non-orientable surfaces (\cref{thm4}).
    

    We replace the boundaries of the surfaces with punctures in order to utilise complete hyperbolic structures and replace arcs with geodesic arcs joining two cusps (not necessarily distinct). This change does not affect the intersection numbers between pairs of isotopy classes of arcs, because there is only one geodesic in an isotopy class \cite[Proposition 1.3]{MR2850125}, and two intersecting geodesics are always in minimal position \cite[Corollary 1.9]{MR2850125}. Therefore, the maximum number of geodesics is equal to the cardinality of the maximal system.

    \begin{defi}
        Let $F$ be a complete finite-area hyperbolic surface with at least $1$ cusp. We define a \emph{simple (geodesic) arc} on $F$ as an embedding map $\alpha : (0,1) \hookrightarrow F$ such that the image of $\alpha$ is a simple geodesic leading into cusps at both ends. Unless otherwise specified, an arc always refers to a geodesic arc in this section.
    \end{defi}

    \begin{nota}
        Let $a,b$ be two points on a hyperbolic surface (including the ideal virtices). We let $l_{ab}$ be a oriented (from $a$ to $b$) geodesic in $F$ joining $a$ and $b$.
    \end{nota}

    \begin{defi}[Tips (see \cref{TipNibSlit})]
        Let $F$ be a complete finite-area hyperbolic surface with at least $1$ cusp, and let $h$ be an oriented embedded horocycle at a cusp $p$. Let $\alpha_1 := l_{pq}, \alpha_2 := l_{pr}$ (in this order) be two \emph{oriented} simple arcs on $F$ with the same starting cusp $p$. We denote the first intersection point of $\alpha_1$ and $h$ is $a$ and the first intersection point of $\alpha_2$ and $h$ is $b$ (Typically, each $\alpha_i$ has only one intersection point with $h$, but if $\alpha_i$ emanates from $p$ and returns to $p$, there will be a first and a second intersection point along the direction of $\alpha_i$). We denote the segment on $h$ from $a$ to $b$ along the direction of $h$ as $\tau$ (If $a$ and $b$ are the same point, $\tau$ could either be $h$ itself or a point). If $\tau$ has no other intersection points with $\alpha_1$ and $\alpha_2$ except at the endpoints, then we say that $\tau$ is the {tip} of $(\alpha_1, \alpha_2)$.

    \end{defi}

    \begin{rmk}
        Tips, as currently defined, depend on the choice of orientation of $h$. However, we will later introduce "the tips of a system of arcs", which are independent of the orientation of $h$.
    \end{rmk}

    \begin{defi}[Nibs (see \cref{TipNibSlit})]
        Given a tip $\tau$ obtained from a pair $\alpha_1, \alpha_2$ of arcs on $F$. Let $\Delta_{tab}$ be an ideal triangle with three ideal vertices $t$, $a$ and $b$. A \emph{nib} of the tip $\tau$ is the unique local isometry $\nu_{\tau} : \Delta_{tab} \rightarrow F$ sending $l_{ta}$ and $l_{tb}$ to $\alpha_1$ and $\alpha_2$ (ensure that $t$ is sent to their common starting cusp) such that $\tau \subset \nu_{\tau}(\Delta_{tab})$.
    \end{defi}

    \begin{nota}
        When considering multiple tips, we will use $\Delta_\tau$ to denote the ideal triangle $\Delta_{tab}$ corresponding to $\tau$.
    \end{nota}

    \begin{defi}[Slits (see \cref{TipNibSlit})]
        \label{defi: slits}
        Given a local isometry $\nu : \Delta_{tab} \rightarrow F$. Let $n \in \Delta_{tab}$ be a point. We refer to a map $\nu|_{l_{nt}} : l_{nt} \rightarrow F$ as a \emph{slit}.
    \end{defi}
    
    \begin{nota}
        We often conflate a slit with its image, much like how we might conflate a curve with its image.
    \end{nota}

    \begin{figure}[H]
        \centering
        \includegraphics[width=1.0\textwidth]{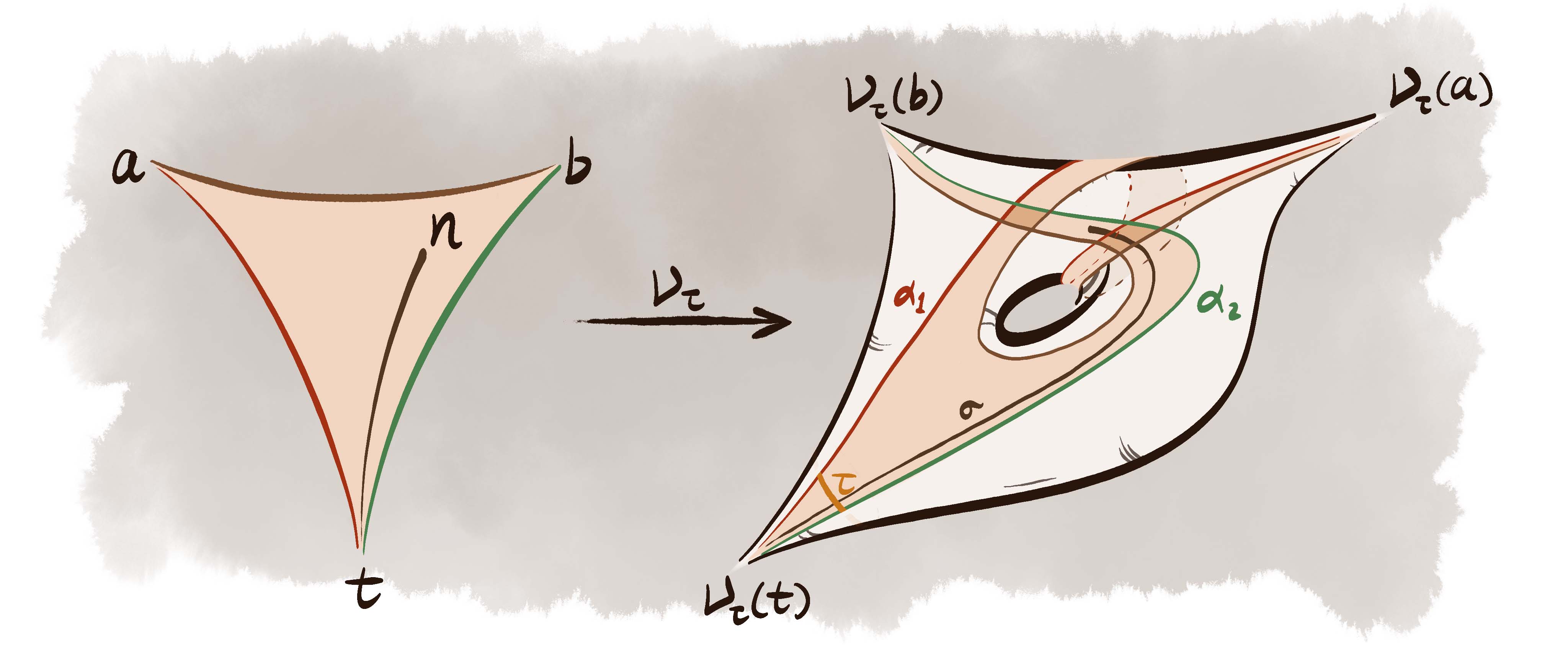}
        \caption{Depictions of a tip $\tau$, a nib $\nu_\tau$ and a slit $\nu_{\tau}|_{l_{nt}}$ on a thriced-punctured torus.}
        \label{TipNibSlit}
        \end{figure}

    \begin{defi}[Lassos (see \cref{Lassos})]
        Let $F$ be a hyperbolic surface of finite-type with at least one cusp. Given a self-intersecting geodesic arc emanating from a cusp. We define the segment of this geodesic ray from the starting cusp, up to its first self-intersection point as a \emph{lasso}. We call that first self-intersection point the \emph{honda} of the lasso. We say a lasso is \emph{$2$-sided} if its regular neighbourhood is an orientable surface (see $\sigma_1$ in \cref{Lassos}), and we say a lasso is \emph{$1$-sided} if its regular neighbourhood is a non-orientable surface (see $\sigma_2$ in \cref{Lassos}).
    \end{defi}

    \begin{defi}[Honda paths (see \cref{honda path})]
        Let $F$ be a complete hyperbolic surface of finite-type with at least one cusp, and let $\sigma$ be a lasso on $F$ emanating at the cusp $p$. A \emph{honda path} is the trajectory of the honda when we perturb the initial direction from which the lasso emanates out from the cusp.
        
        A honda path can be explicitly determined as follows: let $\widetilde{\sigma}_0$ be a lift of $\sigma$ in the hyperbolic plane. Since the group of Möbius transformations is thrice transitive, one may assume that $\widetilde{\sigma}_0$ is a vertical ray emanating from $\infty$ (which is a lift of $p$), and equation for $\widetilde{\sigma}_0$ is $\Re(z) = 1$. We denote the lift of the end point of $\sigma_0$ on $\widetilde{\sigma}_0$ by $z_0$ (see the left picture in \cref{honda path}).

        Let $\epsilon$ be a sufficiently small positive number. Then, $\widetilde{\sigma}_d, d \in (-\epsilon, +\epsilon)$ forms a collection of lifts of geodesic rays emanating from $\infty$, which terminate upon meeting $\widetilde{\sigma}_0$ and satisfy the image of $\widetilde{\sigma}_d$ lies on the vertical line $\{ z \mid \Re(z) = 1 - d \}$. Then we project $\widetilde{\sigma}_d$ to the surfaces, denoted by $\sigma_d$. Similarly, we denote the lift of the end point of $\sigma_d$ on $\widetilde{\sigma}_d$ by $z_d$.
        The honda path $L$ of $\sigma$ is the curve $\{\pi (z_d) \mid d \in (-\epsilon , +\epsilon)\}$, where $\pi : \mathbb{H}^2 \rightarrow F$ is the universal map (see the right picture in \cref{honda path}).
    \end{defi}

    \begin{figure}[H]
        \centering
        \includegraphics[width=1.0\textwidth]{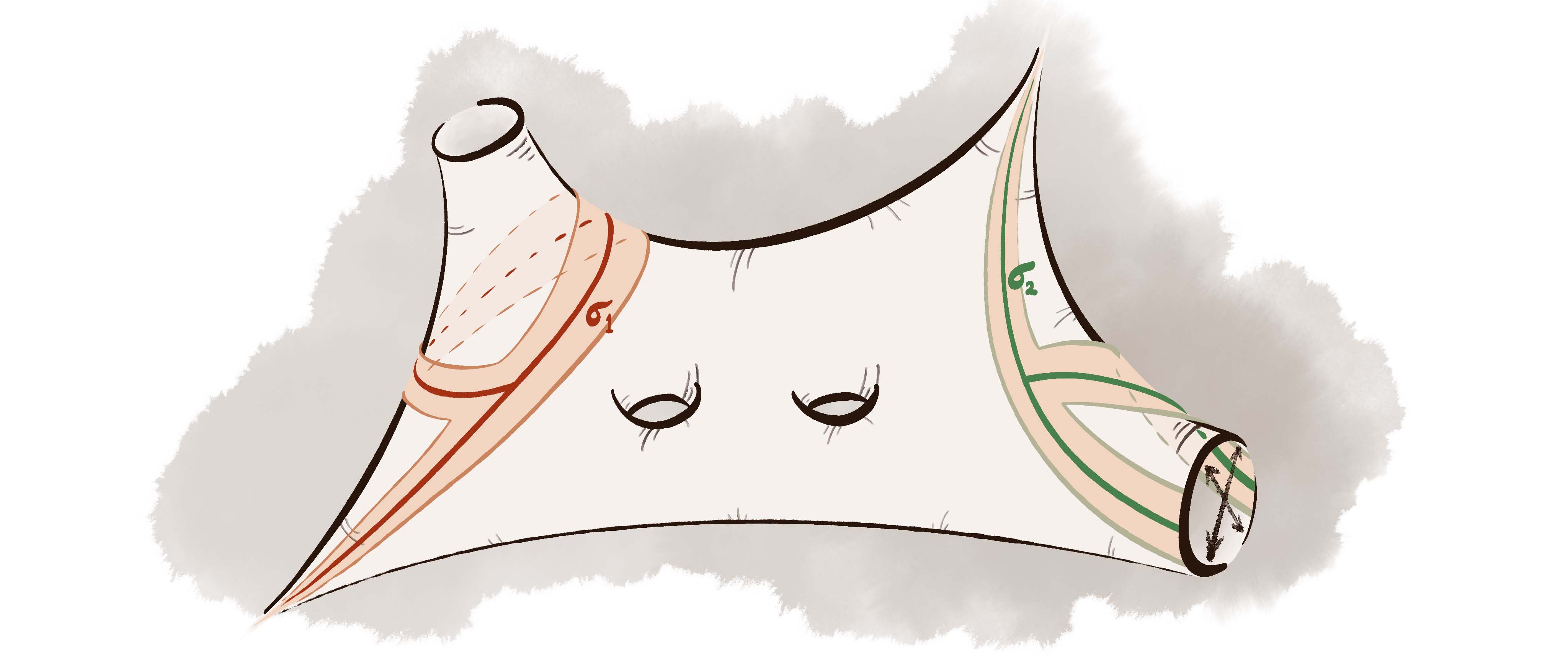}
        \caption{The the $2$-sided lasso $\sigma_1$ (left) and the $1$-sided lasso $\sigma_2$ (right).}
        \label{Lassos}
        \end{figure}

    \begin{figure}[H]
        \centering
        \includegraphics[width=1.0\textwidth]{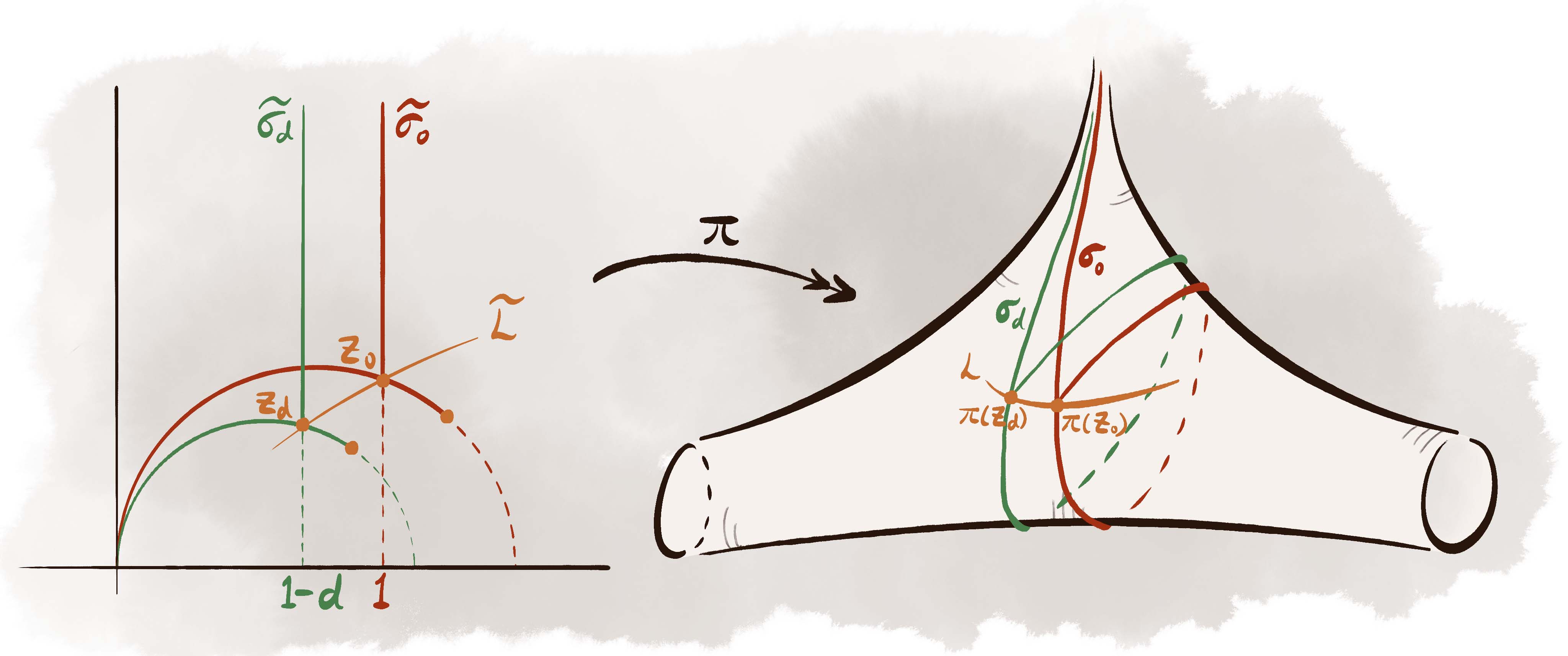}
        \caption{The honda path $L$ of $\sigma$.}
        \label{honda path}
        \end{figure}

\begin{lem}
\label{lemma Lasso}
Let $F$ be a hyperbolic surface of finite-type with at least one cusp, $\sigma$ be a lasso emanating from the cusp $p$, $L$ be the honda path of $\sigma$ and $T$ be the tangent geodesic of $L$ at the self-intersection $q$ point of $\sigma$. Denote a sufficiently small neighbourhood of $q$ by $U(q)$, then $T \cap U(q)$ separates $U(q)$ into two components $U_1(q)$ and $U_2(q)$ so that $l_{pq} \cap U(q)$ and $L \cap U(q)$ are in the same component.
\end{lem}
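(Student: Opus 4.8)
The plan is to lift the whole picture to the universal cover $\mathbb{H}^2$ exactly as in the definition of the honda path, to extract from that construction an explicit equation for the lift of $L$, and to read off both the tangent geodesic $T$ and the required separation from it. Normalise so that $\widetilde\sigma_0$ is the vertical ray $\Re(z)=1$ emanating from $\infty$, a lift of the cusp $p$, and so that the endpoint $z_0$ of $\sigma_0$ is the lift of $q$ arising from the second passage of the geodesic through $q$. Since the geodesic underlying $\sigma$ meets $q$ at two parameters $t_1<t_2$, the line $\widetilde\sigma_0$ carries two lifts of $q$, namely $z_0$ and a point $\tilde q_1$ above it; let $g$ be the deck transformation with $g(\tilde q_1)=z_0$. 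The two strands of $\sigma$ at $q$ then lift to $\widetilde\sigma_0$ (the return strand) and to $g\widetilde\sigma_0$ (the incoming strand), crossing transversally at $z_0$. The crucial point, and the source of the non-orientable difficulty, is that $g$ need not preserve orientation: it is orientation preserving exactly when $\sigma$ is $2$-sided and orientation reversing exactly when $\sigma$ is $1$-sided.

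Next I would obtain an explicit equation for the lift $\widetilde L$. For the perturbed ray $\widetilde\sigma_d=\{\Re(z)=1-d\}$, the lifted honda $z_d$ is the second-passage lift of the self-intersection of $\sigma_d$, so both $z_d$ and $g^{-1}z_d$ lie on $\widetilde\sigma_d$; that is, $\Re(z_d)=1-d=\Re(g^{-1}z_d)$. Hence near $z_0$ the curve $\widetilde L$ coincides with $C=\{\,z\in\mathbb{H}^2 : \Re(z)=\Re(g^{-1}z)\,\}$, and $T$ is the geodesic tangent to $C$ at $z_0$; one first checks that $\nabla\bigl(\Re(z)-\Re(g^{-1}z)\bigr)\neq 0$ at $z_0$ so that $T$ is genuinely defined. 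The lift of $l_{pq}$ near $q$ is the sub-arc of the incoming strand $g\widetilde\sigma_0$ running from $z_0$ towards $g(\infty)$, the relevant lift of $p$. After applying one further isometry carrying $T$ to a vertical geodesic, the two components of $U(q)\setminus T$ are distinguished by the sign of $\Re(\cdot-z_0)$, and the claim reduces to the single assertion that the initial $g(\infty)$-ward direction of $g\widetilde\sigma_0$ and the second-order displacement of $C$ off $T$ carry the same such sign.

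The computation itself proceeds by parametrising $C$ as $z(d)$ with $\Re z(d)=1-d$. Differentiating the identity $\Re(g^{-1}z(d))=1-d$ once determines $z'(0)$, hence the direction of $T$, and twice determines $z''(0)$, hence the side of $T$ into which $C$ bends; both are expressed through the first and second derivatives of $g^{-1}$ at $z_0$. The direction of $g\widetilde\sigma_0$ towards $g(\infty)$ is obtained by pushing the downward direction of $\widetilde\sigma_0$ at $\tilde q_1$ forward by $g$, which again involves only the derivative of $g$ at $\tilde q_1$. Comparing the resulting two signs is then a finite check. I expect a useful sanity check along the way is to verify the statement on one explicit model lasso in each sidedness class and invoke the isometry-equivariance of the construction to reduce the general case to these.

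The main obstacle is the orientation-reversing case, i.e.\ the $1$-sided lasso. There $g$ is a glide reflection, so $z\mapsto g^{-1}z$ is anti-holomorphic and the Cauchy--Riemann bookkeeping used to pass between the derivative of $g$ and the geometric directions acquires a complex conjugation, flipping the sign that governs the side of $T$. One must check that this flip is matched by a compensating change in the $g(\infty)$-ward direction of the incoming strand, so that the same-side conclusion survives. This is exactly the step at which Przytycki's argument, which relies on orientation-preserving isometries of $\mathbb{H}^2$, fails to carry over, and where the separate analysis of $1$-sided lassos at the honda is required.
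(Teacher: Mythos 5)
Your setup is the same as the paper's: lift to $\mathbb{H}^2$ with $\widetilde{\sigma}_0$ the vertical ray $\Re(z)=1$, identify the deck transformation $\varphi$ (your $g$) carrying one strand at the honda to the other, note that $\varphi$ is orientation-preserving exactly when $\sigma$ is $2$-sided, describe the lifted honda path implicitly by $\Re(z_d)=\Re(\varphi^{-1}(z_d))=1-d$, and decide the side of the tangent geodesic $T$ by a second-order comparison at $z_0$ (the paper phrases this as comparing the Euclidean curvature of $\widetilde{L}$ with that of $\widetilde{T}$, obtaining a ratio $R(t,c)=\tfrac{2+ct^2+4ct}{2+ct^2+2t}$ after writing $\varphi(z)=\tfrac{2c(1+t)}{z+t}$, resp.\ $\tfrac{2c(1+t)}{\bar z+t}$).

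The gap is in how you propose to close the argument. The sign of the curvature difference is \emph{not} determined by the derivatives of $g$ at a point alone: it depends on where the parameters $(t,c)$ of the deck transformation sit, and the paper's real work is converting the geometric configuration into inequalities on these parameters --- namely $c>\tfrac12$ from the fact that the two lifts actually cross, $t<-2$ or $t>0$ from the fact that it is the endpoint $z_0$ of $\widetilde{\sigma}_0$ that lies on $\varphi(\widetilde{\sigma}_0)$ and not the reverse, and $1+t<0$ (resp.\ $1+t>0$) from $\varphi$ preserving the upper half-plane in the orientation-preserving (resp.\ reversing) case. Only after restricting to $t<-2$ (case $2$-sided) or $t>0$ (case $1$-sided) does one get $R(t,c)<1$, resp.\ $R(t,c)>1$, and hence the same-side conclusion; for other parameter values the inequality would fail, so calling this ``a finite check'' understates where the content lies. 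Relatedly, your fallback of verifying the statement on one explicit model lasso per sidedness class and invoking isometry-equivariance does not work: lassos realize a continuum of non-isometric configurations (the parameters $t$ and $c$ vary), so no finite list of models covers the general case. If you replace the ``finite check'' by the explicit derivation of these constraints and the resulting estimate on $R(t,c)$, your argument becomes the paper's proof.
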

\begin{proof}
We first consider the case (i) that $\sigma$ is $2$-sided. Let $\varphi(\widetilde{\sigma}_0)$ be another lift of $\sigma$ which intersects $\widetilde{\sigma}_0$ at $z_0$. Here $\varphi : \mathbb{H}^2 \rightarrow \mathbb{H}^2$ is a Möbius transformation induced by an element of $\pi_1 (F)$. Since the group of Möbius transformations is thrice transitive, one may assume that $\widetilde{\sigma}_0$ emanates from $\infty$ (which is a lift of $p$) and ends at $1$, and $\varphi(\widetilde{\sigma}_0)$ emanates from $0$. We denote the end point of $\varphi(\widetilde{\sigma}_0)$ by $2c$. Since $\varphi(\widetilde{\sigma}_0)$ intersects $\widetilde{\sigma}_0$, we have $c > \tfrac{1}{2}$. The equation for $\widetilde{\sigma}_0$ is $\Re(z) = 1$, and the equation for $\varphi(\widetilde{\sigma}_0)$ is $(\Re(z)-c)^2+\Im(z)^2 = c^2$. Let $z_0 = 1 + iy_0$, then we have $y_0 = \sqrt{2c-1}$.

Since $\sigma$ is $2$-sided, $\varphi$ is an orientation-preserving non-elliptic Möbius transformation and has either one or two fixed points on the real axis.

Since $\varphi(\infty) = 0$ and $\varphi(1) = 2c$, the map $\varphi$ is of the form
\begin{align}
\varphi(z) = \tfrac{2c(1+t)}{z+t}, \quad \text{for some } t \in \mathbb{R} \setminus \{-1\}.
\end{align}
The inverse of $\varphi$ is $\varphi^{-1}(z) = -t + 2c(1+t) z^{-1}$.

Let $\widetilde{\sigma}_d$ be another geodesic ray that lies on the vertical line ${ z \mid \Re(z) = 1 - d }$ (see the left picture in \cref{Curvature}). Set $z_d$ as the intersection point of $\widetilde{\sigma}_d$ and $\varphi(\widetilde{\sigma}_d)$, then $\widetilde{L} := \left\{z_d \mid d \in (-\epsilon , +\epsilon) \right\}$ is a lift of $L$. The equations for $\widetilde{L}$ are:
\begin{align}
\label{linear eq}
\begin{cases}
z_d \in \widetilde{\sigma}_d\\
z_d \in \varphi(\widetilde{\sigma}_d)\\
\end{cases}
\Rightarrow 
\begin{cases}
z_d \in \widetilde{\sigma}_d\\
\varphi^{-1}(z_d) \in \widetilde{\sigma}_d\\
\end{cases}
\Rightarrow
\begin{cases}
\Re(z_d) = 1-d\\
\Re(-t + 2c(1+t)z^{-1}) = 1-d.\\
\end{cases} 
\end{align}
We let $z_d = x_d + iy_d$, \cref{linear eq} becomes
\begin{align}
\begin{cases}
x_d = 1-d\\
|z_d|^2 (d-(1+t))+2c(1+t) x_d =0\\
\end{cases}
\Rightarrow 
\begin{cases}
\label{eq x}
x_d = 1-d\\
|z_d|^2 = \tfrac{2c(1+t)(d-1)}{d-(1+t)}.\\
\end{cases}
\end{align}
Hence, we have
\begin{align}
\label{eq y}
y_d = \sqrt{ |z_d|^2 - x_d^2} = \sqrt{ \tfrac{2c(1+t)(d-1)}{d-(1+t)} -(1-d)^2}.
\end{align}

We compare the Euclidean curvature of $\widetilde{L}$ at $z_0$ with the Euclidean curvature of the tangent geodesic $\widetilde{T}$ of $\widetilde{L}$ at $z_0$ for determining the curved direction of $L$ under the hyperbolic metric. The formula of the Euclidean curvature of $\widetilde{L}$ is
\begin{align}
\kappa_{\widetilde{L}} := \left. \tfrac{x_d'y_d''-y_d'x_d''}{((x_d')^2+(y_d')^2)^{3/2}} \right\vert_{d=0}. \notag
\end{align}
And the Euclidean curvature of $\widetilde{T}$ is
\begin{align}
\kappa_{\widetilde{T}} := \left. \tfrac{-x_d'}{y_0 ((x_d')^2+(y_d')^2)^{1/2}} \right\vert_{d=0}. \notag
\end{align}
We explicitly calculate the ratio $R(t,c)$ of the two curvatures:
\begin{align}
R(t,c) := \tfrac{\kappa_{\widetilde{L}}}{\kappa_{\widetilde{T}}} = \tfrac{2+ ct^2 + 4ct}{2+ct^2+2t}.
\end{align}

\begin{figure}[H]
\centering
\includegraphics[width=1.0\textwidth]{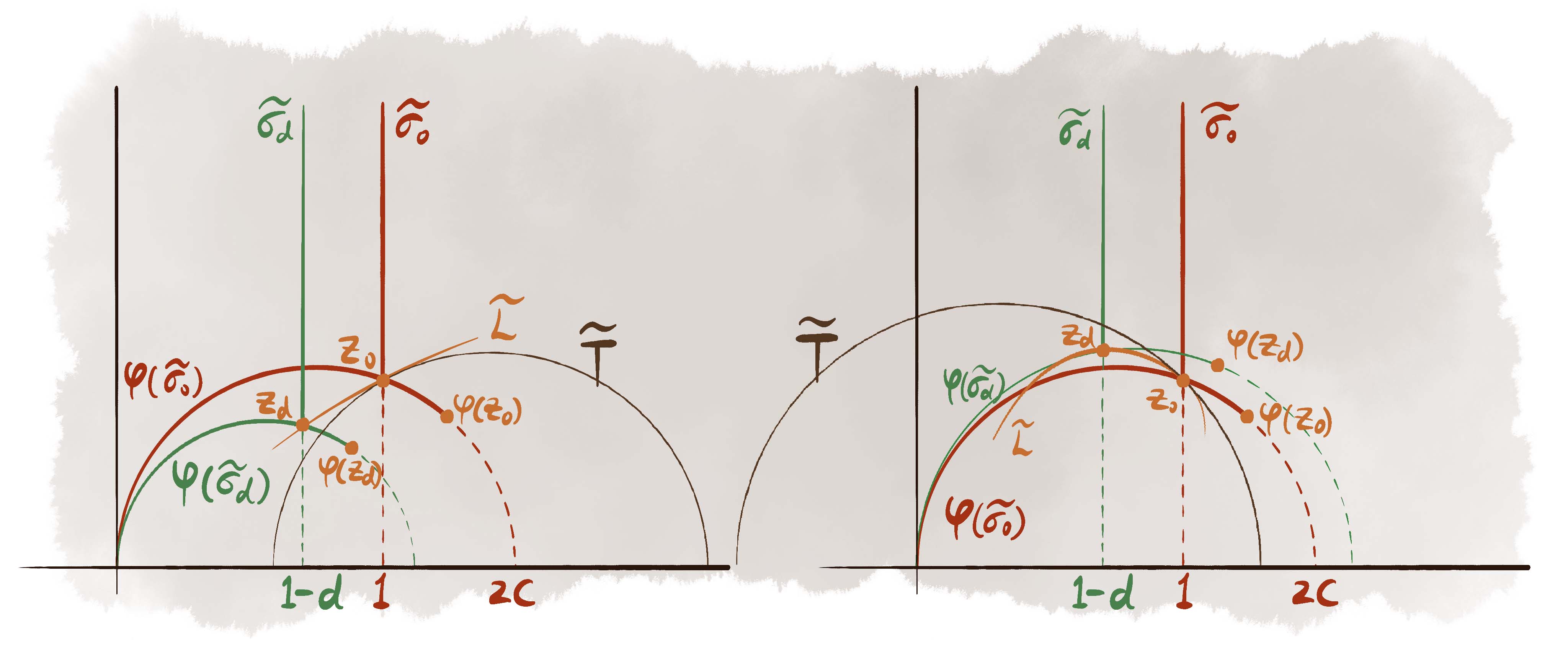}
\caption{Comparing the Euclidean curvatures of the honda path and the tangent geodesic.}
\label{Curvature}
\end{figure}

Our immediate goal is to prove $R(t,c)$ is smaller than $1$. To do so, we invoke constraints arising from the geometric properties of the lasso:

\begin{itemize}
\item The geodesic $\varphi(\widetilde{\sigma}_0)$ intersects $\widetilde{\sigma}_0$. Hence, $c>\tfrac{1}{2}$;

\item The endpoint $z_0$ of the geodesic ray $\widetilde{\sigma}_0$ (i.e., the lift of the honda) lies on the geodesic ray $\varphi(\widetilde{\sigma}_0)$, rather than the other way around (i.e., the endpoint of $\varphi(\widetilde{\sigma}_0)$ lying on $\widetilde{\sigma}_0$). Thus, $\Re(\varphi(z_0)) > \Re(z_0) \Rightarrow \Re(\tfrac{2c(1+t)}{1+y_0 i+t}) > 1 \Rightarrow \tfrac{2c(1+t)^2}{(1+t)^2+y_0^2}>1 \Rightarrow (2c-1)(1+t)^2 > y_0^2 = 2c-1 \Rightarrow t^2+2t>0 \Rightarrow t< -2 $ or $t>0$;

\item The map $\varphi$ preserves the upper half-plane, hence, $\Im(\varphi(z_0))>0 \Rightarrow \Im(\tfrac{2c(1+t)}{1+y_0 i+t}) > 0 \Rightarrow \tfrac{2c(1+t)(-y_0)}{(1+t)^2+y_0^2}>1 \Rightarrow 1+t<0  \Rightarrow t < -1$;

\item The map $\varphi$ has at least one fixed point on the real axis. Therefore, $\varphi(x) = x$ has a real solution, i.e. $x^2 +tx -2c(1+t)=0$ has a real solution. Thus, we have $t^2 + 8c(1+t) \geq 0$.
\end{itemize}

According to these conditions, we have $t < -2$ and $c > \tfrac{1}{2}$, so $R(t,c) = \tfrac{2+ ct^2 + 4ct}{2+ct^2+2t} = 1+\tfrac{2(2c-1)}{ct+\tfrac{2}{t}+2}$, which implies $ct+\tfrac{2}{t} \leq -2\sqrt{2c} < -2$. Hence, $ct+\tfrac{2}{t} +2 < 0 \Rightarrow R(t,c) < 1$. Consequently, the curvature of $\widetilde{T}$ is greater than that of $\widetilde{L}$, which implies that in a small neighbourhood of $z_0$, $l_{0z_0}$ and $\widetilde{L}$ lie on the same side of $\widetilde{T}$ (see the left picture of \cref{Curvature}). As a result, in a small neighbourhood of the self-intersection point $q$, $l_{pq}$ and $L$ lie on the same side of $T$. This completes the proof of case (i).

We next consider the case (ii) that $\sigma$ is $1$-sided. In this case, $\varphi$ is orientation-reversing. The right picture of \cref{Curvature} shows the universal cover and the lift of the geodesic rays when $\varphi$ is orientation-reversing.

Since $\varphi$ is orientation-reversing so that $\varphi(\infty) = 0$, $\varphi(1) = 2c$ , it is of the form
\begin{align}
\label{NonOri eq.}
\varphi(z) = \tfrac{2c(1+t)}{\bar{z}+t},
\end{align}
And the inverse of $\varphi$ is $\varphi^{-1}(z) = -t + 2c(1+t) \bar{z}^{-1}$.

We can similarly determine the equation for $\widetilde{L}$, to obtain a path formally identical to \cref{eq x} and \cref{eq y}:
\begin{align}
z_d = x_d + i y_d = (1-d) +i \sqrt{ \tfrac{2c(1+t)(d-1)}{d-(1+t)} -(1-d)^2}. \notag
\end{align}
Hence, the ratio of two curvatures is also formally the same, which is $R(t,c) = \tfrac{2+ ct^2 + 4ct}{2+ct^2+2t}$.

Our goal this time is to prove $R(t,c)$ is bigger than $1$. We again make use of constraints on $t$ and $c$ arising from the geometric properties of the lasso:

\begin{itemize}
\item The geodesic $\varphi(\widetilde{\sigma}_0)$ intersects $\widetilde{\sigma}_0$, hence, $c>\tfrac{1}{2}$;

\item The endpoint $z_0$ of the geodesic ray $\widetilde{\sigma}_0$ lies on the geodesic ray $\varphi(\widetilde{\sigma}_0)$, which implies $t< -2$ or $t>0$;

\item The map $\varphi$ preserves the upper half-plane, which also implies $t> -1$;

\item Since all orientation-reversing Möbius transformations $\varphi$ can be a representation of an element of $\pi_1(F)$, there is no fourth restriction as with case (i).

\end{itemize}

According to these conditions, we have $t > 0$ and $c > \tfrac{1}{2}$, so $R(t,c) = \tfrac{2+ ct^2 + 4ct}{2+ct^2+2t} = 1+\tfrac{2(2c-1)}{ct+\tfrac{2}{t}+2} > 1$. Consequently, the curvature of $\widetilde{L}$ is greater than that of $\widetilde{T}$, which implies that in a small neighbourhood of $z_0$, $l_{0z_0}$ and $\widetilde{L}$ lie on the same side of $\widetilde{T}$ (see the right picture of \cref{Curvature}). As a result, in a small neighbourhood of the self-intersection point $q$, $l_{pq}$ and $L$ lie on the same side of $T$. This completes the proof of case (ii).
\end{proof}

\begin{thm}
\label{part triagle embedding}
Let $F$ be an hyperbolic surface of finite-type with at least one cusp, and let $\Delta_{tab}$ be an ideal triangle. Let $\bar{a}$ and $\bar{b}$ be two points on $l_{ta}$ and $l_{tb}$, respectively (see \cref{ConeTriangle}). Let $\nu: \Delta_{tab} \rightarrow F$ be a local isometry such that $\nu|_{l_{t\bar{a}}}$ and $\nu|_{l_{t\bar{b}}}$ are embeddings and $\nu$ maps $t$ to a cusp. Let $n$ be a point in the triangle $\Delta_{t\bar{a}\bar{b}}$, then the slit $\nu|_{l_{tn}}$ is an embedding.
\end{thm}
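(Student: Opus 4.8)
The plan is to pass to the universal cover and reduce the statement to one about a single geodesic ray. Since $\Delta_{tab}$ is simply connected and $\nu$ is a local isometry into the hyperbolic surface $F = \mathbb{H}^2/\Gamma$, I would first lift $\nu$ to an isometric embedding $\iota: \Delta_{tab} \hookrightarrow \mathbb{H}^2$ and write $\nu = \pi \circ \iota$, where $\pi : \mathbb{H}^2 \rightarrow F$ is the covering projection, normalising so that the cusp $\nu(t)$ lifts to $\infty$. Then each slit $l_{tn}$ becomes a vertical ray, and $\nu|_{l_{tn}}$ fails to be an embedding precisely when some nontrivial $\gamma \in \Gamma$ carries one point of this vertical ray to another point of the same ray, i.e. when $\gamma(l_{tn})$ meets $l_{tn}$. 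Because a subray of an embedded ray is embedded, it suffices to treat the case where $n$ lies on the far edge $l_{\bar a\bar b}$ of the truncated triangle $\Delta_{t\bar a\bar b}$; every slit with $n$ interior is a subray of one of these.

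Next I would run a continuity argument along the edge $l_{\bar a\bar b}$. Let $n_s$, $s\in[0,1]$, traverse $l_{\bar a\bar b}$ from $\bar a$ to $\bar b$. At the two ends the slits are exactly $\nu|_{l_{t\bar a}}$ and $\nu|_{l_{t\bar b}}$, which are embeddings by hypothesis, so the set of $s$ for which $\nu|_{l_{tn_s}}$ is an embedding is relatively open in $[0,1]$ and contains the endpoints $0$ and $1$ (a transverse self-intersection, once absent, cannot appear without first passing through a degenerate configuration). If the conclusion failed, this set would have a boundary point $s_*$ at which a first self-intersection of the slit appears. At that critical parameter $\nu(l_{tn_{s_*}})$ is a geodesic ray from the cusp that has just become self-intersecting, so it contains a lasso $\sigma$ whose honda $q$ is the newly born self-intersection point, with $\nu|_{l_{tq}}$ still embedded up to $q$.

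The crux is to rule out this configuration using \cref{lemma Lasso}. Perturbing the direction in which $\sigma$ emanates from the cusp is exactly the operation of sliding $n_s$ along the edge, and the honda then travels along the honda path $L$ of $\sigma$. \cref{lemma Lasso} pins down the local position of $L$: near $q$, the strand $l_{pq}$ and the honda path $L$ lie on the same side of the tangent geodesic $T$ to $L$ at $q$, the argument splitting according to whether $\sigma$ is $2$-sided or $1$-sided. I would use this same-side property to produce a monotonicity (non-recession) statement for the honda: as $s$ decreases through $s_*$, the honda path $L$ runs into the region swept out by the slits rather than terminating, so the self-intersection must in fact already be present for $s$ slightly below $s_*$, or else the honda can be driven along $L$ all the way to the boundary direction $\theta_a$, where it would survive on the side $\nu|_{l_{t\bar a}}$. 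The first reading contradicts the choice of $s_*$; the second contradicts the hypothesis that $\nu|_{l_{t\bar a}}$ is an embedding.

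The main obstacle is precisely this last step: making rigorous the dictionary between "perturbing the emanating direction" (the honda path) and "sliding $n_s$ along $l_{\bar a\bar b}$", and then converting the qualitative same-side statement of \cref{lemma Lasso} into the monotonicity that forbids the honda from receding to an embedded side. Additional care is needed to treat uniformly the two ways a first self-intersection can appear — an interior self-tangency versus the moving endpoint $n_s$ crossing an earlier strand — and to keep the $1$-sided and $2$-sided lasso cases aligned with the two cases of \cref{lemma Lasso}, since it is exactly this orientation-independent curvature comparison that lets the argument work on non-orientable $F$.
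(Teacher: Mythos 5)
Your setup --- reducing to slits that end on the far edge $l_{\bar a\bar b}$, observing that the embedded locus of parameters is open, extracting a first parameter $s_*$ at which a slit fails to embed, and identifying the failure as a lasso whose honda sits exactly at the moving endpoint $\nu(n_{s_*})$ --- is sound and runs parallel to the paper. The gap is in the final step, and it is exactly the one you flag. \cref{lemma Lasso} locates the honda path $L$ relative to the \emph{tangent geodesic $T$ of $L$ at $q$}, so it can only produce a contradiction when the critical geodesic of your sweep is tangent to $L$ at $q$. In your sweep the critical geodesic is the fixed edge $l_{\bar a\bar b}$, and nothing forces $L$ to be tangent to it at $\nu(n_{s_*})$; generically $L$ crosses the edge transversally there. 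In that transversal situation all of your data are consistent and no contradiction arises: for $s<s_*$ the honda in direction $s$ lies strictly beyond $n_s$ (outside the truncated triangle), so the slit is embedded; at $s_*$ the honda reaches the edge; for $s>s_*$ it may pass inside. Neither branch of your proposed dichotomy closes this off: the self-intersection being ``already present'' for $s$ slightly below $s_*$ is true but harmless, since it occurs beyond the point where the slit stops; and driving the honda along $L$ down to the direction $\theta_a$ yields at best a self-intersection of the full ray toward $a$ located \emph{beyond} $\bar a$, which does not contradict the hypothesis that the finite segment $\nu|_{l_{t\bar a}}$ is embedded.

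The paper repairs precisely this point by sweeping a different one-parameter family. It foliates $\Delta_{t\bar a\bar b}$ by the geodesic leaves $l_{a_xb_x}$ obtained by truncating at equal distance $x$ from $\bar a$ and $\bar b$, starts from a truncation $x_0$ deep inside an embedded cusp horoball where every slit embeds, and takes the minimal $x_1$ such that all slits from $\operatorname{Int}(\Delta_{t a_{x_1} b_{x_1}})$ are embeddings. Minimality over \emph{this} family forces the critical leaf $\nu(l_{a_{x_1}b_{x_1}})$ to be tangent to the honda path at the offending honda $\nu(m)$, so the leaf is the geodesic $T$ of \cref{lemma Lasso}; the lemma then says $L$ enters the component on the cusp side of the leaf, i.e.\ the open sub-triangle, where every point of $L$ is a honda and hence the endpoint of a non-embedded slit --- contradicting minimality. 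If you replace your sweep along the fixed edge by this sweep of parallel truncations (equivalently, take the point of the non-embedded locus extremal in the truncation coordinate $x$ rather than the first point met along $l_{\bar a\bar b}$), your argument goes through and coincides with the paper's.
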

\begin{proof}
Let $a_x$ be a point on $l_{t\bar{a}}$ so that the length of $l_{\bar{a} a_x}$ is $x$, and $b_x$ be a point on $l_{t\bar{b}}$ so that the length of $l_{\bar{b} b_x}$ is $x$. There is a collection of parallel geodesic segments $\{ l_{a_x b_x} \}_{x \in \mathbb{R}_{\geq 0}}$ which foliate the triangle $\Delta_{t\bar{a}\bar{b}}$ (see \cref{ConeTriangle}). Since the regular neighbourhood of a cusp is an annulus, there is a embedded horocycle $h$ around the cusp. We take a sufficiently big $x=x_0$ such that $\nu(\Delta_{t a_{x_0} b_{x_0}})$ is in the region enclosed by this horocycle, then $\nu|_{\Delta_{t a_{x_0} b_{x_0}}}$ is an embedding. Hence, for every $n \in \Delta_{t a_{x_0} b_{x_0}}$, the slit $\nu|_{l_{nt}}$ is an embedding.

We assume that there exists a slit of $\Delta_{t\bar{a}\bar{b}}$ which is not an embedding, there exists the minimal $x=x_1 \in (0 , x_0)$, such that for every $n \in \operatorname{Int}(\Delta_{t a_{x_1} b_{x_1}})$ (Here, $\operatorname{Int}(S)$ means the interior of a subset $S$), the slit $\nu|_{l_{nt}}$ is an embedding.

\begin{figure}[H]
\centering
\includegraphics[width=1.0\textwidth]{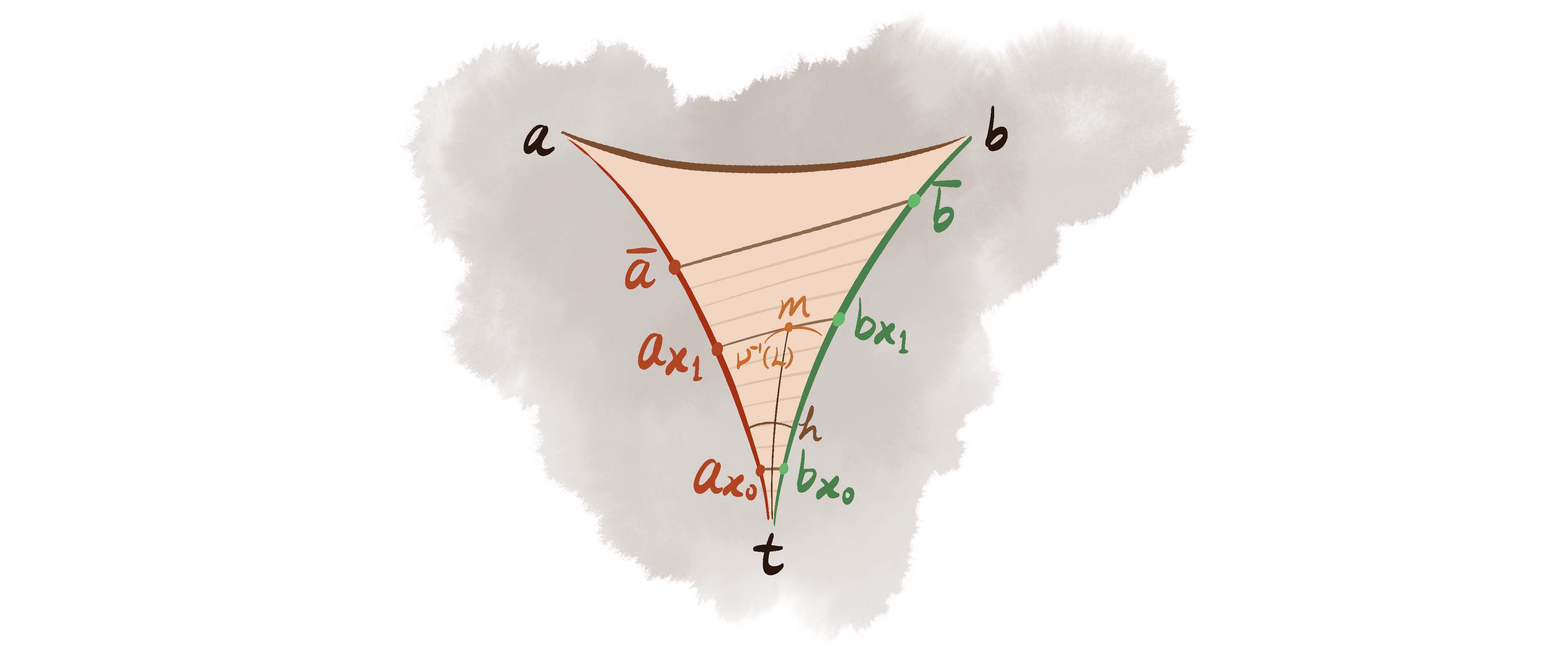}
\caption{The schematic picture of \cref{part triagle embedding}.}
\label{ConeTriangle}
\end{figure}

Hence, there is a $m \in \operatorname{Int}(l_{a_{x_1} b_{x_1}})$ such that $\sigma := \nu(l_{mt})$ is a lasso. Consider the honda path $L$ of $\sigma$. Since $x_1$ is the minimal number such that for every $n \in \operatorname{Int}(\Delta_{t a_{x_1} b_{x_1}})$, the slit $\nu|_{l_{nt}}$ is an embedding. The geodesic $\nu(l_{a_{x_1} b_{x_1}})$ on $F$ is tangent to $L$ at $\nu(m)$. By \cref{lemma Lasso}, we know $\nu^{-1}(L) \cap \operatorname{Int}(\Delta_{t a_{x_1} b_{x_1}}) \neq \emptyset$, which contradicts $\nu|_{l_{nt}}$ being embeddings for $n \in \operatorname{Int}(\Delta_{t a_{x_1} b_{x_1}})$.

\end{proof}

\begin{cor}
\label{whole triagle embedding}
Let $F$ be a hyperbolic surface of finite-type with at least one cusp, $\Delta_{tab}$ be an ideal triangle. Let $\nu: \Delta_{tab} \rightarrow F$ be a local isometry such that $\nu|_{l_{ta}}$ and $\nu|_{l_{tb}}$ are embeddings and $\nu$ maps $t,a,b$ to cusps (not necessarily distinct). Then every slit of $\nu$ is an embedding.
\end{cor}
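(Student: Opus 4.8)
The plan is to obtain \cref{whole triagle embedding} directly from \cref{part triagle embedding} by exhausting the ideal triangle $\Delta_{tab}$ with finite sub-triangles of the form $\Delta_{t\bar a\bar b}$. Fix a slit $\nu|_{l_{nt}}$, where $n$ is an interior point of $\Delta_{tab}$. First I would choose $\bar a\in l_{ta}$ and $\bar b\in l_{tb}$ close enough to the ideal vertices $a$ and $b$ that $n\in\Delta_{t\bar a\bar b}$. This is legitimate because, as $\bar a\to a$ and $\bar b\to b$, the chords $l_{\bar a\bar b}$ converge to the edge $l_{ab}$; by geodesic convexity the closed triangles $\Delta_{t\bar a\bar b}$ increase with $\bar a,\bar b$ and their union is the whole interior of $\Delta_{tab}$, so any fixed interior point is eventually captured.

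With such $\bar a,\bar b$ in hand, both hypotheses of \cref{part triagle embedding} are automatic: the restrictions $\nu|_{l_{t\bar a}}$ and $\nu|_{l_{t\bar b}}$ are embeddings, being restrictions of the given embeddings $\nu|_{l_{ta}}$ and $\nu|_{l_{tb}}$ to sub-segments, and $\nu$ carries $t$ to a cusp by assumption. Furthermore, since $\Delta_{t\bar a\bar b}$ is geodesically convex and contains both the interior point $n$ and the ideal vertex $t$, the entire slit $l_{nt}$ lies in $\Delta_{t\bar a\bar b}$. Thus \cref{part triagle embedding}, applied to $n\in\Delta_{t\bar a\bar b}$, shows that $\nu|_{l_{nt}}$ is an embedding. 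As $n$ ranges over the interior, every slit based at an interior point is an embedding.

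It then remains to treat slits whose base point lies on $\partial\Delta_{tab}$. When $n\in l_{ta}$ or $n\in l_{tb}$, the slit $l_{nt}$ is a sub-segment of $l_{ta}$ or $l_{tb}$, which is an embedding by hypothesis, so there is nothing to prove. The genuinely delicate case, which I expect to be the main obstacle, is a base point $n$ on the third edge $l_{ab}$: such an $n$ lies strictly beyond every chord $l_{\bar a\bar b}$, hence is never enclosed by a finite sub-triangle, so the exhaustion does not reach it. I would dispose of it by a limiting argument: approximate $n$ by interior points $n_k\to n$, whose slits are already embeddings by the previous step. A failure of $\nu|_{l_{nt}}$ to be an embedding would produce a first self-intersection, i.e. a honda; if this honda occurred strictly before the endpoint, a transversality and continuity argument would propagate it to the nearby embedded slits $\nu|_{l_{n_k t}}$, a contradiction, so the only possibility is a honda exactly at $\nu(n)$. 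To rule this last possibility out I would invoke \cref{lemma Lasso}: the honda path of this lasso lies on the same side of its tangent geodesic at $\nu(n)$ as the lasso body $\nu(l_{tn})$, which forces the self-intersection to persist for slightly perturbed directions pointing into the interior and thereby contradicts the embeddedness of the approximating interior slits. Equivalently, one could rerun the foliation argument of \cref{part triagle embedding} on the full ideal triangle, letting the foliation parameter tend to infinity rather than stopping at a finite critical value.
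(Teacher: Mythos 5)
Your proposal is correct and follows essentially the same route as the paper: both exhaust the ideal triangle by finite sub-triangles $\Delta_{t\bar a\bar b}$ (the paper phrases this as a foliation by leaves $l_{a_x b_x}$ with $x\in\mathbb{R}$) and invoke \cref{part triagle embedding} on each of them. The one real difference is your explicit treatment of base points on the third edge $l_{ab}$, which no finite sub-triangle captures; the paper's foliation misses these points as well and its proof passes over them silently, so this step is an improvement in rigour rather than a detour. Your sketch of it is sound, and can even be streamlined: if the first self-intersection of $\nu|_{l_{nt}}$ occurs at an interior point $m$ of $l_{tn}$, then the slit based at $m$ is itself a lasso, contradicting the interior case directly with no propagation argument needed; if it occurs exactly at $\nu(n)$, then either the tangent geodesic $T$ of the honda path is transverse to $\nu(l_{ab})$ at $\nu(n)$ — in which case the honda path crosses onto the triangle side for one sign of the perturbation — or $T$ is tangent to $\nu(l_{ab})$ there, in which case \cref{lemma Lasso} forces the honda path onto the triangle side; either way some nearby slit acquires a self-intersection strictly before its endpoint, again contradicting the interior case.
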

\begin{proof}

We arbitrarily choose two respective points $\bar{a}, \bar{b}$ on $l_{ta}, l_{tb}$. There is a collection of parallel geodesic segments $\{ l_{a_x b_x} \}_{x \in \mathbb{R}}$ which foliate the triangle $\Delta_{tab}$ such that $a_0 = \bar{a}$, $b_0 = \bar{b}$ and the oriented lengths of both $l_{\bar{a}a_x}$ and $l_{\bar{b}b_x}$ are $x$.

Same as the theorem before, we may find an $x_0$ such that for every $n \in \Delta_{t a_{x_0} b_{x_0}}$, the slit $\nu|_{l_{nt}}$ is an embedding.

We assume that there is an slit which is not an embedding, then we may find the minimal $x=x_1 \in (0 , x_0)$ such that for every $n \in \operatorname{Int}(\Delta_{t a_{x_1} b_{x_1}})$, the slit $\nu|_{l_{nt}}$ is an embedding. Hence, there is a point $m$ on $\operatorname{Int}(l_{a_{x_1} b_{x_1}})$ such that $\sigma_0 := \nu(l_{mt})$ is a lasso, which contradicts to \cref{part triagle embedding}.

\end{proof}

        \begin{defi}[The tips of a system of arcs]
            Given a complete finite-area hyperbolic surface $F$ with at least one cusp, let $A$ be a system of arcs on $F$ with finite cardinality. For each cusp from which an arc in $A$ emanates, we take an oriented embedded horocycle, denoted as $h_1, \cdots, h_m$. We then consider all ordered pairs $(\alpha_1, \alpha_2)$ of \emph{oriented} arcs in $A$ such that $\alpha_1, \alpha_2$ have a common starting cusp, and the tip $\tau$ of $(\alpha_1, \alpha_2)$ does not intersect with any other arc in $A$ except at the endpoints. We refer to the set of tips $\mathcal{T}_{A}$ that satisfy this condition as the \emph{tips of $A$}.
        \end{defi}
    
        \begin{rmk}
            From this definition, we see that $\bigcup_{\tau \in \mathcal{T}_{A}} \tau = \bigcup_{i=1}^{m}h_i$. Therefore, regardless of the orientation chosen for each horocycle, all segments of these horocycles cut by the arcs in $A$ will become tips in $\mathcal{T}_{A}$. Thus, $\mathcal{T}_{A}$ is independent of the chosen orientation of the horocycles.
        \end{rmk}

    \begin{lem}
    \label{lem2}
        Let $F$ be a hyperbolic surface of finite-type with at least one cusp, $A$ be a system of arcs on $F$, and $\tau$ be a tip of $A$, then the slit of $\tau$ is an embedding.
    \end{lem}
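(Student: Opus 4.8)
The plan is to deduce this directly from \cref{whole triagle embedding}, applied to the nib $\nu_\tau$ of the tip $\tau$. All of the analytic content — the Euclidean curvature comparison of \cref{lemma Lasso} feeding into the foliation argument of \cref{part triagle embedding} — is already in place, so the lemma should reduce to the purely formal task of checking that the nib of $\tau$ satisfies the hypotheses of the corollary, after which the conclusion is immediate.

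First I would recall the data attached to $\tau$. Being a tip of $A$, the segment $\tau$ arises from an ordered pair $\alpha_1, \alpha_2 \in A$ of oriented arcs sharing a starting cusp $p$, and it comes equipped with a unique nib, namely a local isometry $\nu_\tau : \Delta_{tab} \rightarrow F$ sending $l_{ta}$ to $\alpha_1$ and $l_{tb}$ to $\alpha_2$, with $\nu_\tau(t) = p$ and $\tau \subset \nu_\tau(\Delta_{tab})$. By \cref{defi: slits}, the slit of $\tau$ is then a slit of this very nib, i.e. a restriction $\nu_\tau|_{l_{nt}}$ for a point $n$ of the triangle; hence it suffices to prove that \emph{every} slit of $\nu_\tau$ is an embedding. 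To apply \cref{whole triagle embedding} with $\nu := \nu_\tau$, I would verify three points: that $\nu_\tau$ is a local isometry, which holds by construction; that $\nu_\tau|_{l_{ta}} = \alpha_1$ and $\nu_\tau|_{l_{tb}} = \alpha_2$ are embeddings, which holds because $\alpha_1, \alpha_2$ are members of the system $A$ and are therefore simple; and that $\nu_\tau$ sends the ideal vertices $t, a, b$ of $\Delta_{tab}$ to cusps.

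The last condition is the only one requiring genuine care. We have $\nu_\tau(t) = p$ by construction, and since the arcs of $A$ are complete geodesics running into cusps at both ends, while $\nu_\tau$ restricts to an isometry carrying $l_{ta}, l_{tb}$ onto $\alpha_1, \alpha_2$, the local isometry extends to the ideal boundary and must send the remaining ideal vertices $a, b$ to the terminal cusps of $\alpha_1, \alpha_2$, rather than to interior points. One should make this extension-to-the-ideal-boundary argument explicit, as it is exactly what licenses the cusp hypothesis of the corollary. Once all three hypotheses are established, \cref{whole triagle embedding} immediately gives that every slit of $\nu_\tau$ embeds, and in particular the slit of $\tau$ embeds, which is the assertion. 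I do not expect any substantive obstacle beyond this bookkeeping, since the difficult step — ruling out a slit folding back because the honda path bends to the wrong side — has already been absorbed into \cref{lemma Lasso} and \cref{part triagle embedding}.
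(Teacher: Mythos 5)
Your proposal is correct and follows exactly the paper's route: the paper's own proof of this lemma is a one-line appeal to \cref{whole triagle embedding} applied to the nib $\nu_\tau$. Your additional verification that $\nu_\tau$ satisfies the corollary's hypotheses (local isometry, the two sides embedding as the simple arcs $\alpha_1,\alpha_2$, and the ideal vertices mapping to cusps) is sound bookkeeping that the paper leaves implicit.
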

    \begin{proof}
        We obtain this lemma directly by \cref{whole triagle embedding}.
    \end{proof}

    The following results, \cref{lem1} and \cref{lem3}, are stated greater generality than their counterparts in Przytycki \cite{przytycki2015arcs}, as the original proofs also apply for non-orientable surface.

    \begin{lem}[Przytycki {\cite[Lemma 2.3]{przytycki2015arcs}}]
    \label{lem1}
        Let $F$ be a complete finite-area hyperbolic surface with at least $2$ cusp. For each nontrivial partition $P = P_1 \sqcup P_2$ of cusps of $F$, we define $\mathscr{A}(F,0,P_1, P_2)$ as the set of $0$-systems of arcs in $F$ which have one end point in $P_1$, and the other one in $P_2$. Then we have
        \begin{align}
            \| \mathscr{A}(F,0,P_1, P_2)\|_\infty = 2|\chi (F)|.\notag
        \end{align}
    \end{lem}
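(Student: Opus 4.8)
The plan is to extract from a maximal system a square (quadrilateral) decomposition of the surface and then read off the count from its Euler characteristic, following Przytycki's original argument. Throughout I replace each arc by its geodesic representative, so that the arcs of a $0$-system are pairwise disjoint, essential and pairwise non-isotopic, and I let $\bar{F}$ denote the closed surface obtained by filling in each of the $c := \#(P_1 \sqcup P_2)$ cusps of $F$ with a marked point; recall that $\chi(\bar{F}) = \chi(F) + c$.

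First I would fix a maximal element $A \in \mathscr{A}(F,0,P_1,P_2)$ (maximal systems exist because $\#A$ is a priori bounded) and regard $A$ as the $1$-skeleton of a CW decomposition of $\bar{F}$: the $0$-cells are the marked points, the $1$-cells are the arcs of $A$, and the $2$-cells are the components of $\bar{F} \setminus A$. Since every bounding arc has one endpoint in $P_1$ and one in $P_2$, along the boundary of any complementary region the corners alternate between $P_1$-type and $P_2$-type; in particular every complementary region that is a disk is an even-gon.

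The heart of the argument is to show that \emph{every} complementary region is an (ideal) square, and there are three things to rule out using maximality together with the bipartite constraint. A bigon between two arcs is impossible because the arcs are in minimal position and non-isotopic (bigon criterion, \cite{MR2850125}). A disk region with $2k$ sides for $k \geq 3$ always admits a diagonal joining two corners of opposite type (for instance the short diagonal from vertex $1$ to vertex $4$, which exist and are of opposite type once $2k \geq 6$); this diagonal is an essential arc non-isotopic to every arc of $A$, contradicting maximality, so disk regions have exactly four sides. Finally I must exclude regions that are not disks, i.e.\ those carrying genus, extra boundary components, or an unused cusp in their interior. Since the boundary of any such region already carries corners of both types, it contains an essential arc that can be taken to join a $P_1$-corner to a $P_2$-corner (and an unused interior cusp can be joined directly to an opposite-type boundary corner), once more contradicting maximality. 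I expect this last step --- producing an opposite-type essential arc inside every non-disk region, thereby forcing all regions to be squares and all cusps to be used --- to be the main obstacle, as it is exactly where the bipartite restriction must be balanced against the topology of the complementary pieces.

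Once all $T$ complementary regions are squares and all $c$ cusps are used, the count is immediate. Writing $N := \#A$, each square has four sides and each arc bounds two squares, so $4T = 2N$; and the Euler characteristic of the square decomposition of $\bar{F}$ gives $c - N + T = \chi(\bar{F}) = \chi(F) + c$, that is $T - N = \chi(F)$. Substituting $T = N/2$ yields $N = -2\chi(F) = 2|\chi(F)|$. Since every maximal system therefore consists of exactly $2|\chi(F)|$ arcs, we conclude that $\| \mathscr{A}(F,0,P_1,P_2)\|_\infty = 2|\chi(F)|$.
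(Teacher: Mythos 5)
Your proposal is correct and follows exactly the square-decomposition-plus-Euler-characteristic argument of Przytycki's Lemma 2.3, which is precisely the proof this paper relies on (the paper cites it without reproving it, noting only that the argument is orientation-independent --- as your write-up confirms, since no step uses orientability). The only quibble is cosmetic: the diagonal from vertex $1$ to vertex $4$ of a hexagon is not the ``short'' diagonal, but the choice of endpoints is what matters and yours correctly joins corners of opposite type.
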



    \begin{lem}[Przytycki {\cite[Lemma 2.6]{przytycki2015arcs}}]
    \label{lem3}
     Let $F$ be a complete finite-area hyperbolic surface with at least one cusp, and let $A \in \mathscr{A}(F,1)$ be a $1$-system of arcs on $F$. We Let $\Delta$ be the disjoint union of $\Delta_{\tau}$ for all tips $\tau \in \mathcal{T}_{A}$. Let $\nu : \Delta \rightarrow F$ be a map so that $\nu|_{\Delta_{\tau}} = \nu_\tau$ for all $\tau \in \mathcal{T}_{A}$. For $n_i \in \Delta_{\tau_i} , i = 1,2 , n_1 \neq n_2$ so that $\nu(n_1) = \nu(n_2)$, we have
    \begin{align}
        \nu_{\tau_1}(l_{n_1 t_1}) \cap \nu_{\tau_2}(l_{n_2 t_2}) = \{ \nu(n_1) \}.\notag
    \end{align}
    \end{lem}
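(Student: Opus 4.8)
The plan is to argue by contradiction: assume the two slits $\sigma_1 := \nu_{\tau_1}(l_{n_1 t_1})$ and $\sigma_2 := \nu_{\tau_2}(l_{n_2 t_2})$ share a point $Q$ other than $P := \nu(n_1) = \nu(n_2)$, and then derive a contradiction with the embeddedness furnished by \cref{lem2}, with the hypothesis that $A$ is a $1$-system, and with the defining property that a tip of $A$ meets no arc of $A$ except at its endpoints. The basic geometry to record first is that each $\sigma_i$ is a geodesic segment (the image of the geodesic $l_{n_i t_i}$ under the local isometry $\nu$) running from $P$ into the cusp $\nu(t_i)$, and that by \cref{lem2} each $\sigma_i$ is individually embedded. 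Comparing the two directions in which $\sigma_1$ and $\sigma_2$ leave $P$ then splits the analysis: opposite directions force $\sigma_1,\sigma_2$ to be two rays of a single geodesic line, already meeting only at $P$; the same direction is a degenerate overlap; and distinct, non-opposite directions give a transversal crossing.

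I would dispose of the degenerate same-direction case via the horocyclic structure of the tips. If $\sigma_1$ and $\sigma_2$ overlap on an initial segment they lie on a common geodesic heading to one ideal endpoint, hence into a single cusp $c$. Passing to the universal cover and normalising $c$ to $\infty$, the arcs of $A$ emanating from $c$ lift to vertical lines and the chosen horocycle to a horizontal line, so by the remark following the definition of the tips of $A$, the tips at $c$ are exactly the sub-segments into which the vertical arcs cut the horocycle. A slit running into $c$ is a vertical ray, meeting the horocycle in a single point that lies in exactly one tip; thus an overlap of $\sigma_1$ and $\sigma_2$ forces $\tau_1 = \tau_2$. Within a single nib the near-cusp restriction is an embedding (as in the proof of \cref{whole triagle embedding}), so two distinct geodesics to the apex cannot share an image; hence $n_1$ and $n_2$ lie on the same slit, and embeddedness of that slit (\cref{lem2}) gives $n_1 = n_2$, contradicting the hypothesis.

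It then remains to treat the transversal case. I would take $Q$ to be the intersection point nearest $P$ along $\sigma_1$ and consider the two sub-segments $\sigma_1|_{[P,Q]}$ and $\sigma_2|_{[P,Q]}$, two distinct geodesic segments with common endpoints, i.e. a geodesic bigon. Since any two points of $\mathbb{H}^2$ are joined by a unique geodesic, this bigon cannot bound a disk, so it is essential: lifting $P$ to $\tilde{P}$, the two lifts terminate at distinct points $\tilde{Q}_1 \neq \tilde{Q}_2$ over $Q$, related by a nontrivial deck transformation $g \in \pi_1(F)$ with $g\tilde{Q}_1 = \tilde{Q}_2$. I would then push this essential bigon outward along $\sigma_1,\sigma_2$ beyond $Q$ into the cusps $\nu(t_1), \nu(t_2)$, using the two arcs of $A$ bounding each nib together with the horocyclic tips: the identification $g$ must carry a boundary arc of one nib across a boundary arc of the other, or across a tip. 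This produces either two arcs of $A$ meeting more than once, contradicting that $A$ is a $1$-system, or a tip meeting an arc of $A$ away from its endpoints, contradicting the definition of the tips of $A$.

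The main obstacle is exactly this last reduction: converting the abstract essential bigon into a concrete forbidden intersection in every configuration. The work lies in controlling how the nib triangles $\Delta_{\tau_1}$ and $\Delta_{\tau_2}$ overlap both near the crossing $Q$ and near the cusps, and in checking that the deck transformation $g$ realising the bigon is genuinely forced to push a defining arc or tip across another, rather than being absorbed without creating a crossing. Here the embeddedness of every slit from \cref{lem2} is indispensable, since it prevents the sub-segments from folding back on themselves and lets one read off the crossing of the underlying arcs directly. Once the prohibited intersection is produced in each configuration, the contradiction is complete and we conclude $\nu_{\tau_1}(l_{n_1 t_1}) \cap \nu_{\tau_2}(l_{n_2 t_2}) = \{\nu(n_1)\}$.
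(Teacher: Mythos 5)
First, note that the paper does not actually prove \cref{lem3}: it cites Przytycki's Lemma 2.6 and asserts that the original proof goes through verbatim on non-orientable surfaces, so there is no in-paper argument to compare against. Judged on its own terms, your proposal correctly handles the degenerate case (an initial overlap at $\nu(n_1)$ forces the two slits to coincide as geodesic rays into a single cusp, the horocycle/tip structure then forces $\tau_1=\tau_2$, and embeddedness from \cref{lem2} forces $n_1=n_2$), and it correctly sets up the bigon-plus-deck-transformation framework for a transversal second intersection. But the proof is not complete, for two reasons. The lesser one: your dismissal of the ``opposite directions'' case is unjustified --- if the two slits leave $P$ in opposite directions they lie on a single complete geodesic of $F$, but such a geodesic can perfectly well self-intersect, so the two rays can cross away from $P$. (This is harmless only because that configuration still produces two \emph{distinct} geodesic segments from $P$ to $Q$ and hence falls under your transversal analysis; as written, though, the case division contains a false assertion.)

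The serious issue is that the transversal case --- which is the entire content of the lemma --- is left as a plan rather than a proof. The claim that the nontrivial deck transformation $g$ realising the essential bigon ``must carry a boundary arc of one nib across a boundary arc of the other, or across a tip'' is exactly what needs to be established, and you concede as much when you describe it as ``the main obstacle.'' It is not a routine verification: one must control configurations where the two slits run into the same cusp, where the nibs share a bounding arc of $A$, where the crossing $Q$ occurs near the common basepoint $P$ rather than near the cusps, and where the linking of ideal endpoints of the lifted triangles $\widetilde{\Delta}_{\tau_1}$, $g(\widetilde{\Delta}_{\tau_2})$ fails to force a crossing of their sides. Until the forbidden intersection (two arcs of $A$ meeting twice, or an arc of $A$ meeting the interior of a tip) is actually produced in each of these configurations, no contradiction has been derived, and the hypothesis that $A$ is a $1$-system has not genuinely been used. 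As it stands the proposal is a credible outline with the decisive step missing.
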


    Since we have \cref{lem2}, we can extend \cref{pro2} to the non-orientable surface case by adapting Przytycki's original proof for the orientable surface case of \cref{pro2}.

    \begin{pro}[Przytycki {\cite[Proposition 2.2]{przytycki2015arcs}}]
    \label{pro2}
     Let $F$ be a complete finite-area hyperbolic surface with at least one cusp, and let $A \in \mathscr{A}(F,1)$ be a $1$-system of arcs on $F$. We let $\Delta$ be the disjoint union of $\Delta_{\tau}$ for all tips $\tau \in \mathcal{T}_{A}$. Let $\nu : \Delta \rightarrow F$ be a map so that $\nu|_{\Delta_{\tau}} = \nu_\tau$ for all $\tau \in \mathcal{T}_{A}$, then for any point $s \in F$, then we have
        \begin{align}
            \#\nu^{-1}(s) \leq 2 (|\chi (F)|+1).\notag
        \end{align}
    \end{pro}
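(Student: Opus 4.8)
The plan is to reduce the statement to Przytycki's counting result \cref{lem1} by turning the point $s$ into a new cusp. Write $\nu^{-1}(s) = \{n_1, \dots, n_k\}$ with $n_i \in \Delta_{\tau_i}$, and for each $i$ set $\ell_i := \nu_{\tau_i}(l_{n_i t_i})$, the slit through $n_i$; this is a geodesic ray from $s = \nu(n_i)$ into the cusp $\nu(t_i)$. By \cref{lem2} every $\ell_i$ is an embedded arc, and by \cref{lem3}, for $i \ne j$ we have $\ell_i \cap \ell_j = \{s\}$. In particular the $\ell_i$ are pairwise distinct, since a slit is nondegenerate and so none can equal $\{s\}$. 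Thus the configuration consists of exactly $k$ embedded geodesic rays emanating from the common point $s$ into cusps, pairwise meeting only at $s$.

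Next I would delete the point $s$, replacing it by a cusp. Set $F^\ast := F \setminus \{s\}$; since $\chi(F) < 0$ we have $\chi(F^\ast) = \chi(F) - 1 < 0$, so $F^\ast$ carries a complete finite-area hyperbolic structure with $|\chi(F^\ast)| = |\chi(F)| + 1$, and it has at least two cusps (the new cusp $s^\ast$ together with at least one original cusp, which is hit because $k \ge 1$; the case $k=0$ is trivial). In $F^\ast$ the rays $\ell_1, \dots, \ell_k$ become \emph{pairwise disjoint}, as their only common point $s$ has been removed, and each now runs from $s^\ast$ to one of the original cusps. Applying \cref{lem1} with the partition $P_1 = \{s^\ast\}$ and $P_2 = \{\text{original cusps}\}$ would then give
\begin{align}
k \le \| \mathscr{A}(F^\ast, 0, P_1, P_2) \|_\infty = 2|\chi(F^\ast)| = 2(|\chi(F)| + 1), \notag
\end{align}
which is exactly the claimed bound.

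The crux, and the step I expect to be the main obstacle, is verifying that $\{\ell_1,\dots,\ell_k\}$ genuinely forms a $0$-system on $F^\ast$, i.e.\ that the arcs are essential and pairwise non-isotopic. Essentiality is immediate: each $\ell_i$ joins the \emph{distinct} punctures $s^\ast$ and $\nu(t_i)$, hence cannot be isotoped into a single boundary component. Pairwise non-isotopy is where the hyperbolic geometry must enter. If $\ell_i$ and $\ell_j$ terminate at different cusps they are non-isotopic for trivial endpoint reasons, so suppose they end at the same cusp $t$ and are isotopic in $F^\ast$. Since an isotopy of arcs fixes both ideal endpoints, lifting to $\mathbb{H}^2$ and choosing a common lift $\tilde s$ of $s$ produces two geodesic rays from $\tilde s$ asymptotic to the \emph{same} ideal point (a lift of the parabolic fixed point of $t$); but in $\mathbb{H}^2$ there is a unique geodesic ray from a given point to a given ideal point, forcing $\ell_i = \ell_j$, contrary to distinctness. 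Hence distinct slits lie in distinct isotopy classes.

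I would stress that the remaining points are only topological bookkeeping: the rays are geodesic for the metric on $F$ but need not be geodesic for any metric on $F^\ast$, which is harmless because \cref{lem1} counts isotopy classes and is insensitive to the choice of hyperbolic structure, so the $\ell_i$ may be used directly as the required disjoint, essential, pairwise non-isotopic arcs. With this in hand the displayed inequality follows and completes the proof.
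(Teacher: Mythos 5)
Your proof is correct and follows essentially the same route as the paper's: delete $s$ to form $F' = F\setminus\{s\}$, use \cref{lem2} and \cref{lem3} to turn the slits through the points of $\nu^{-1}(s)$ into a $0$-system of arcs from the new puncture to the original cusps, and apply \cref{lem1} with $P_1=\{s\}$ to obtain the bound $2(|\chi(F)|+1)$. The only difference is that you explicitly verify essentiality and pairwise non-isotopy of the resulting arcs, a point the paper's proof passes over in silence.
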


    \begin{proof}[Proof of \cref{pro2}]
        Let $F' := F \setminus \{s\}$. Each point $n_i \in \nu^{-1}(s)$ belongs to one of $\Delta_\tau$, which we labeled by $\Delta_{\tau_i}$. There is a unique slit $\nu|_{l_{n_i t_{\tau_i}}}: l_{n_i t_{\tau_i}} \rightarrow F$ (see \cref{defi: slits}), then $\alpha_i := \nu(l_{n_i t_{\tau_i}}) \setminus \{s\}$ is a simple arc by \cref{lem2}, which starts from $s$ and ends at a cusp of $F$. Moreover, $\alpha_i \cap \alpha_j = \{s\}, i \neq j$ by \cref{lem3}. Hence, $\{\alpha_i \mid i \}$ is a $0$-system of arcs in $F'$ whose elements start from $P_1 := \{s\}$ and end at $P_2 := \text{all cusps of\ }F$. By \cref{lem1}, $ \# \{\alpha_i \mid i \} \leq 2|\chi(F')| = 2(|\chi(F)|+1)$, where $\# \{\alpha_i \mid i \} = \# \{n_i \mid i \} = \# \nu^{-1}(s)$.
    \end{proof}

    \begin{proof}[Proof of the lower bound in \cref{thm4}]
        This proof is based on polygon decomposition. We cut $F$ along a $0$-system of arcs $A_0$ with $\# A_0 = |\chi(F)|+1$ such that $P := F \setminus \sqcup_{\alpha \in A_0}  \alpha$ is a $(2|\chi|+2)$-gon. And let $A_1$ be the set of all diagonals of $P$, where a diagonal of $P$ means a geodesic arc connecting two non-adjacent vertices of $P$. After regluing P to form $F$, the arcs $A := A_0 \sqcup A_1$ will form a $1$-system of arcs of $F$ with
        \begin{align}
            \# A = \# A_0 + \# A_1 = (|\chi (F)|+1) + \tfrac{(2|\chi (F)|+2)(2|\chi (F)|+2-3)}{2} = 2|\chi (F)|(|\chi (F)|+1).
        \end{align}
    \end{proof}

    \begin{proof}[Proof of the upper bound in \cref{thm4}]
        Let $A$ be a $1$-system of arcs on $F$. The area of the surface $F$ is $\text{Area}(F) = 2 \pi |\chi(F)|$. Since each arc in $A$ admits both positive and negative orientations, and each oriented arc corresponds to the first arc in a pair of arcs associated to a tip of $A$, therefore, $\text{Area}(\Delta)$ is the unit area of one triangle times the number of tips of $A$, which is $\pi \cdot (2\# A)$.

        By \cref{pro2}, the map $\nu : \Delta := \sqcup_{\tau} \Delta_{\tau} \rightarrow F$ is at most $2(|\chi(F)|+1)$ to $1$. Hence, we have
        \begin{align}
            \tfrac{2 \pi \# A}{2 \pi |\chi(F)| } = \tfrac{\text{Area}(\Delta) }{\text{Area}(F)} \leq 2(|\chi(F)|+1) \Rightarrow \# A \leq 2|\chi(F)|(|\chi(F)|+1). \notag
        \end{align}

    \end{proof}

\section{Complete $1$-Systems of Loops On Non-Orientable Surfaces}\label{section5}

\subsection{Cases of non-Negative Euler Characteristics}

    There are only three topological types of compact non-orientable surfaces with non-negative Euler characteristics. We will find the cardinality of maximal complete $1$-systems of loops of them case by case.

    \begin{figure}[H]
        \centering
        \includegraphics[width=1.0\textwidth]{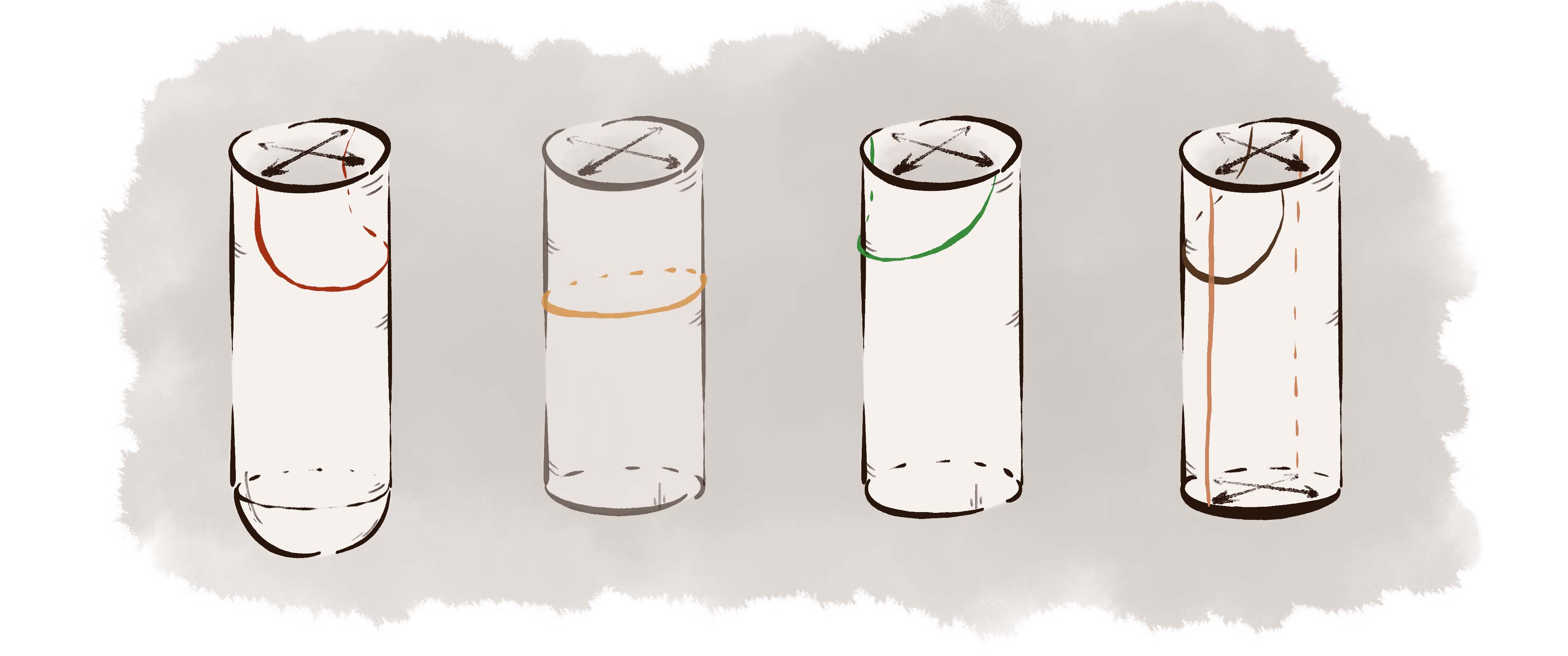}
        \caption{Maximal complete $1$-systems of loops in non-hyperbolic cases (except the second picture).}
        \label{NonHyperbolicCases}
        \end{figure}

    For $F = N_{1,0} = \mathbb{RP}^2$, since $\pi_{1}(F) = \mathbb{Z}_2$, there is only one essential simple loop up to isotopy on $F$ (see the first picture in \cref{NonHyperbolicCases}) Hence, $\|\widehat{\mathscr{L}}(N_{1,0},1) \|_\infty = 1$.

    For a Möbius strip $F = N_{1,1}$, the circle which crosses through the cross-cap is an example of an essential $1$-sided simple loop on $F$ (see the third picture in \cref{NonHyperbolicCases}). For a $2$-sided loop $\gamma$ on $F$, $W(\gamma)$ (see \cref{defi RN}) is homeomorphic to an annulus, we can only add one cross-cap to one of boundaries of $W(\gamma)$ to obtain $F$, which means $\gamma$ is isotopic to another boundary (see the second picture in \cref{NonHyperbolicCases}). Therefore, there is no essential $2$-sided simple loop on $F$. Assume that there are two essential $1$-sided simple loops $\gamma_1, \gamma_2$ intersecting once, then $W( \{ \gamma_1 , \gamma_2 \})$ is homeomorphic to $N_{1,2}$ (See \cref{TwoOneSided}). To obtain $F$, the only way is to add one disk to one of boundaries of $W( \{\gamma_1 , \gamma_2 \})$, but then these two loops will be isotopic. Hence, $\| \widehat{\mathscr{L}}(N_{1,1},1) \|_\infty = 1$.

    \begin{figure}[H]
        \centering
        \includegraphics[width=1.0\textwidth]{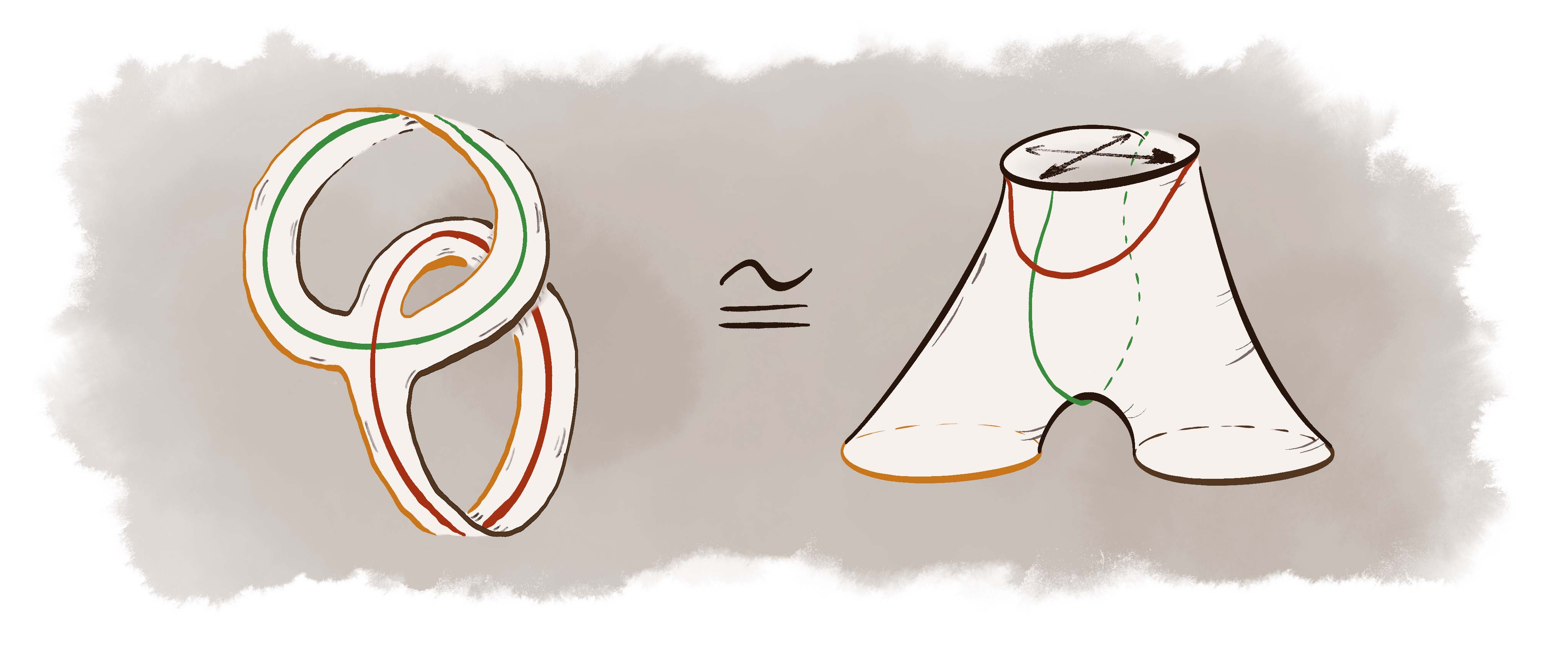}
        \caption{The regular neighbourhood of two $1$-sided loops intersecting once.}
        \label{TwoOneSided}
        \end{figure}

    For a Klein bottle $F = N_{2,0}$, 
    we consider a complete $1$-system of loops $L$ of $F$. Assume that there are two $1$-sided loops $\gamma_1, \gamma_2$ in $L$, As before, $W( \{ \gamma_1 , \gamma_2 \})$ is homeomorphic to $N_{1,2}$. Hence, after adding one disk and one cross-cap to each boundary of $W( \{ \gamma_1 , \gamma_2 \})$, $\gamma_1$ will be isotopic to $\gamma_2$. We assume that there are two $2$-sided loops $\gamma_1, \gamma_2$ in $L$, then $W( \{ \gamma_1 , \gamma_2 \})$ is homeomorphic to $S_{1,1}$, there is no way to add additional topology to the boundary of $W( \{ \gamma_1 ,\gamma_2 \})$ to obtain $F$. Therefore, $L$ at most consists of one $1$-sided loop and one $2$-sided loop, i.e. $\|\widehat{\mathscr{L}}(N_{1,0},1)\|_\infty \leq 2$. The last picture in \cref{NonHyperbolicCases} shows $L$. Hence, $\|\widehat{\mathscr{L}}(N_{2,0},1)\|_\infty = 2$.

\subsection{Lower Bound for System Cardinality}

    \begin{thm}
    \label{thm1}
        Let $F = N_{c,n}$ with $|\chi(F)| > 0$,then we have
        \begin{align}
        & \max \left\{ | L | \middle| L \in \widehat{\mathscr{L}}(F,1) \text{\ s.t. there are $t$ loops in $L$ which are $2$-sided\ } \right\} \notag \\
        & \geq
        \begin{cases}
            { {c-1+n} \choose 2 } + \left\lfloor \tfrac{c - 1 - t}{2}  \right\rfloor + t + 1 , & \text{\ if $0 \leq t \leq c - 1$\ } , \\
            c, & \text{\ if $t = c$ and $c$ is odd\ } . \\
        \end{cases}\\
        & =
        \begin{cases}
            \tfrac{1}{2} \left( (c+n)^2 - 3n - 2c + t + 2 \right) , & \text{\ if $0 \leq t \leq c - 1$ and $c-t$ is even\ } , \\
            \tfrac{1}{2} \left( (c+n)^2 - 3n - 2c + t + 3 \right) , & \text{\ if $0 \leq t \leq c - 1$ and $c-t$ is odd\ } , \\
            c , & \text{\ if $t = c$ and $c$ is odd\ } . \\
        \end{cases} \label{eq: lowerboundthm1}
        \end{align}
    \end{thm}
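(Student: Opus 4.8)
The plan is to prove this purely as a lower bound, so the whole argument is an explicit \textbf{build-and-verify}. Throughout I set $|\chi| = c+n-2$ and regard $F = N_{c,n}$ as a sphere carrying $c$ cross-caps and $n$ punctures, filling each boundary component with a once-punctured disk (which changes neither isotopy classes nor intersection numbers of loops). The organising engine is the projective-plane phenomenon that any two distinct essential $1$-sided loops passing once through a common cross-cap can be isotoped to meet \emph{exactly} once: in $\mathbb{RP}^2$ any two distinct lines meet in a single point, and this persists after introducing punctures away from that point. Accordingly I single out one distinguished cross-cap $X_0$ and think of the loops through $X_0$ as ``lines''.

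I would then assemble the system $L$ from four families, counted so as to match \cref{eq: lowerboundthm1}. \emph{(a)} Treat the remaining $c-1$ cross-caps together with the $n$ punctures as $c-1+n$ \emph{features}; for each unordered pair $\{i,j\}$ of features build a $1$-sided ``line'' $L_{ij}$ that passes once through $X_0$ and is associated to an arc joining $i$ to $j$, giving $\binom{c-1+n}{2}$ loops that pairwise meet once by the engine above. \emph{(b)} Add $t$ two-sided loops, each wrapping \emph{twice} through one of the $c-1$ non-special cross-caps (hence genuinely $2$-sided), arranged to cross every line and every other two-sided loop once. \emph{(c)} Pair up the $c-1-t$ still-unused cross-caps; each pair yields one further $1$-sided line through $X_0$ and the two cross-caps of the pair (three cross-cap passages, so $1$-sided), contributing $\lfloor (c-1-t)/2\rfloor$ loops; the cross-cap budget $t + 2\lfloor(c-1-t)/2\rfloor \le c-1$ is exactly respected. \emph{(d)} Add one last line through $X_0$ in general position, accounting for the $+1$. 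The exceptional line $t=c$ with $c$ odd is handled by a separate short construction in which all $c$ cross-caps are spent as cores of $1$-sided loops paired cyclically, oddness guaranteeing a single consistent cyclic pairing and yielding $c$ loops.

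The verification splits in two. First, each pair of loops must be in minimal position with geometric intersection number exactly $1$; for a single pair, transverse intersection in one point already forces minimal position by the first part of \cref{lem0}, which holds on \emph{any} surface, so only the isotopy realising the single crossing must be exhibited, and this is where the $X_0$-line picture together with the local models around the wrapped and paired cross-caps does the work. Second, and this is the genuinely non-orientable subtlety, the loops must be pairwise \emph{non-isotopic}: a $1$-sided loop meets an isotopic copy of itself exactly once, so ``meeting once'' does \emph{not} by itself certify distinct isotopy classes as it would on an orientable surface via the second part of \cref{lem0}. I would therefore attach to each loop a combinatorial invariant --- for family (a) the pair of features it is associated with, and for the remaining families the set of cross-caps it traverses together with its crossing pattern against a fixed reference $0$-system of arcs --- and check these invariants are pairwise distinct.

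I expect the main obstacle to be precisely this simultaneous bookkeeping: making the $\binom{c-1+n}{2}$ lines of (a) pairwise non-isotopic \emph{and} pairwise once-crossing, while keeping them compatible with the twice-wrapping loops of (b) and the paired-cross-cap lines of (c), all within the tight cross-cap budget. Once the invariants are in place and mutual single-crossing is checked, the count is immediate: $\binom{c-1+n}{2}$ from (a), $t$ from (b), $\lfloor(c-1-t)/2\rfloor$ from (c), and $1$ from (d), which is the claimed bound in \cref{thm1}; the two displayed closed forms then follow by separating on the parity of $c-t$ (equivalently of $c-1-t$, which governs the floor) and expanding $\binom{c-1+n}{2}$ algebraically. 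As a consistency check, the case $c=1$ forces $t=0$ and recovers $\binom{n}{2}+1$, matching \cref{cor2}.
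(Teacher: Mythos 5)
Your overall architecture (a distinguished central cross-cap through which all loops pass, $\binom{c-1+n}{2}$ loops indexed by pairs of gaps/features, $\lfloor(c-1-t)/2\rfloor$ loops through pairs of leftover cross-caps, one extra loop for the $+1$, and a separate connected-sum argument for $t=c$ odd) matches the paper's construction. But your family \emph{(b)} is broken, and the failure is not a bookkeeping issue that more care would fix: a loop whose only cross-cap passages are \emph{two} passages through a single non-special cross-cap $X_i$ (and which avoids $X_0$) has $\mathbb{Z}/2$-homology class $2x_i=0$ modulo boundary classes, where $x_1,\dots,x_c$ are the cross-cap cores. Since the mod-$2$ geometric intersection number of two loops is determined by the $\mathbb{Z}/2$-intersection pairing of their homology classes, and boundary classes lie in the radical of that pairing, such a loop meets \emph{every} loop on $F$ an even number of times. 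It therefore cannot cross your lines through $X_0$ exactly once, no matter how it is "arranged". The paper avoids this by routing its $2$-sided loops through the central cross-cap once \emph{and} through one outer cross-cap once, giving class $x_0+x_i$; then every pairwise pairing in the whole system evaluates to $x_0\cdot x_0=1$ because all the auxiliary classes ($x_i$, $x_k+x_l$, boundary classes) pair trivially with one another.

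A secondary ambiguity of the same nature sits in your family \emph{(a)}: if the "line $L_{ij}$ associated to an arc joining $i$ to $j$" actually passes through cross-cap features $i$ and $j$, then $L_{ij}$ and $L_{ik}$ (sharing the cross-cap $i$) have classes $x_0+x_i+x_j$ and $x_0+x_i+x_k$, whose pairing is $1+x_i\cdot x_i=0$ mod $2$, so they meet an even number of times rather than once. You need these loops to run through the \emph{gaps} between features (as the paper does), so that their classes are $x_0$ plus boundary classes only. The homological sanity check above is worth running on any proposed family before attempting the isotopy/non-isotopy verification; it is the cheapest way to see which routings can possibly realise a complete $1$-system.
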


    \begin{proof}[Proof.]
        We set one cross-cap at the center of a sphere, and distribute the remaining cross-caps and boundaries on a circle around the center (see \cref{LowerCompleteOneSystem}).
        
        We construct a complete $1$-system of loops $L$ containing exactly $t$ loops which are $2$-sided. There are four types of loops in $L$, that is $L = \sqcup_{i=1}^{4} L_i$.

        \begin{figure}[H]
            \centering
            \includegraphics[width=1.0\textwidth]{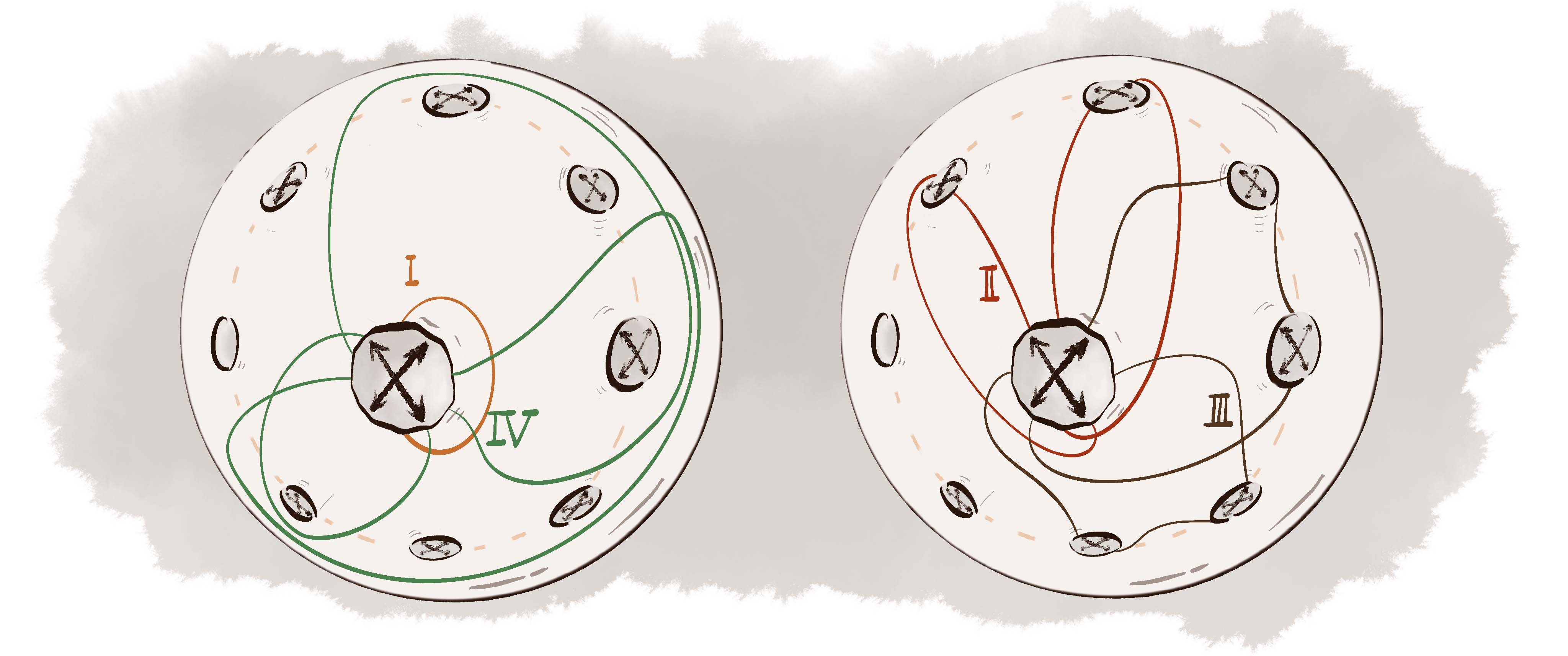}
            \caption{Examples of each type of loops in the complete $1$-system of loops on $N_{8,1}$.}
            \label{LowerCompleteOneSystem}
            \end{figure}

        \begin{itemize}
            \item There is only one type I loop, which is the unique loop whose double lift is the boundary of the central cross-cap. In fact, for any $1$-sided loop, its double lift will become the boundary of some cross-cap. Therefore, designating the type I loop serves to mark a cross-cap, which we call the \emph{central cross-cap}.
        
            \item Type II loops are those $2$-sided loops, which start from the central cross-cap, travel directly to an outer cross-cap, pass through it, and then return straight to the center. These loops pass through a chain of $t$ \emph{adjacent} outer cross-caps.
        
            \item Type III loops are the $\left\lfloor \tfrac{c - 1 - t}{2} \right\rfloor$ loops that start from the central cross-cap, pass directly through two consecutive outer cross-caps which are disjoint from all the type II loops, and then return straight to the center. There are $\left\lfloor \tfrac{c - 1 - t}{2} \right\rfloor$ pairs of adjacent cross-caps disjoint from type II loops. So we add $\left\lfloor \tfrac{c - 1 - t}{2} \right\rfloor$ type III loops.
        
            \item Type IV loops are those that start from the central cross-cap, pass through a gap between outer cross-caps and holes, travel around the outside of the circle to another gap, pass through it, and finally return to the center. Since there are a total of $c-1+n$ gaps, there are ${ {c-1+n} \choose 2 }$ type IV loops.
        \end{itemize}

        Comparing them pairwise, we can see that any two loops in $L$ are non-isotopic and intersect exactly once, i.e. they form a complete $1$-system of loops. Hence, we have
        \begin{align}
            \# L & = \# L_1 + \# L_2 + \# L_3 + \# L_4 = 1 + t + \left\lfloor \tfrac{c - 1 - t}{2}  \right\rfloor + { {c-1+n} \choose 2 }.
        \end{align}
        After simplifying, we have
        \begin{align}
        \# L =
            \begin{cases}
                \tfrac{1}{2} \left( (c+n)^2 - 3n - 2c + t + 2 \right) , & \text{\ if $0 \leq t \leq c - 1$ and $c-t$ is even\ } , \\
                \tfrac{1}{2} \left( (c+n)^2 - 3n - 2c + t + 3 \right) , & \text{\ if $0 \leq t \leq c - 1$ and $c-t$ is odd\ }.
            \end{cases}
        \end{align}

        Lastly, if $c$ is odd and $t=c$, then $F = N_{c,n}$, which is connected sum of $ S_{{\frac{1}{2}(c-1)},0}$ and $ N_{1,n}$. Hence, there are $2g+1 = 2 \cdot \tfrac{1}{2}(c-1) + 1 = c$ loops which are $2$-sided and pairwise intersect once on $S_{g,0} := S_{{\frac{1}{2}(c-1)},0}$.

    \end{proof}

\begin{rmk}[Recall \cref{cor1}]
Let $F = N_{c,n} $. The maximal cardinality of complete $1$-systems of loops on $F$ (denoted by $\| \widehat{\mathscr{L}}(F,1) \|_\infty$) satisfies:
\begin{align}
\| \widehat{\mathscr{L}}(F,1) \|_\infty \geq { {c-1+n} \choose 2 } + c = \tfrac{1}{2} \left( (c+n)^2 - 3n - c + 2 \right).\notag
\end{align}
\end{rmk}

\begin{proof}[Proof of \cref{cor1}]
One may determine the maximal value of the lower bound estimation of $\max \left\{ \# L \mid L \in \widehat{\mathscr{L}}(F,1), \text{\ there are $t$ loops in $L$ which are $2$-sided\ } \right\}$ (\cref{eq: lowerboundthm1}) with respect to $t$ in \cref{thm1} is obtained when $t = c-1$.
\end{proof}

    we similarly construct a lower bound for $\| \mathscr{L}(F,1) \|_\infty$ when $F$ is a non-orientable surface. This yields \cref{thm5}.

    \begin{proof}[Proof. of \cref{thm5}]
        We arrange cross-caps and boundaries on the circle same as the setting of the theorem before.
        
        And we also partition the $1$-system of loops $L$ into four types of loops $L_1 \sqcup L_2 \sqcup L_3 \sqcup L_4$  (see \cref{LowerOneSystem}).

        \begin{itemize}
            \item As in the proof of \cref{thm1}, the unique type I loop is the $1$-sided loop we choose to mark as the central cross-cap.

            \item Type II loops are the $2$-sided loops that start from the central cross-cap, travel directly to an outer cross-cap, pass through it, travel around the outside of the circle to a gap, pass through it, and return to the center. There are $c-1$ choices for the outer cross-caps and $c-1+n$ choices for the gaps. However, if we select a specific outer cross-cap, the two loops corresponding to the two gaps adjacent to this outer cross-cap are isotopic. Therefore, we actually only have $c+n-2$ gap options. In total, there are $(c-1)(c+n-2)$ loops in $L_2$.
        
            \item Type III loops are those that start from the central cross-cap, travel directly to an outer cross-cap, pass through it, travel around the outside of the circle to another outer cross-cap, pass through it, and then return directly to the center. Thus, there are ${ {c-1} \choose 2 }$ loops in $L_3$.
        
            \item Type IV loops are those that start from the central cross-cap, pass through a gap between outer cross-caps and holes, travel around the outside of the circle to another gap, pass through it, and finally return to the center. Since there are a total of $c-1+n$ gaps, there are ${ {c-1+n} \choose 2 }$ type IV loops. This is same construction in the proof of \cref{thm1}.
        \end{itemize}

        \begin{figure}[H]
            \centering
            \includegraphics[width=1.0\textwidth]{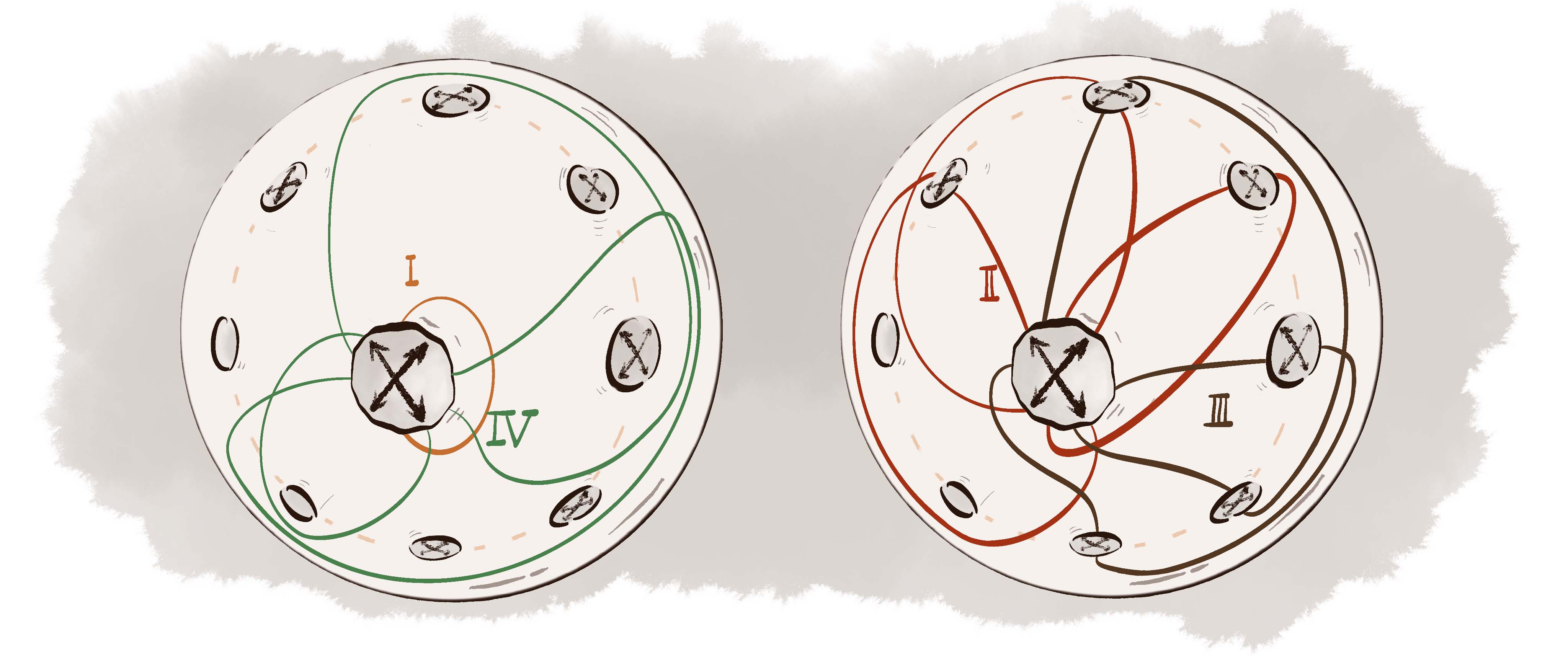}
            \caption{Examples of each type of loops in the $1$-system of loops on $N_{7,2}$.}
            \label{LowerOneSystem}
            \end{figure}

        By comparing them pairwise, we can see that any pair of loops in $L$ are non-isotopic and intersect at most once. Thus it forms a complete $1$-system of loops. Hence, we have
        \begin{align}
                \# L & = \# L_1 + \# L_2 + \# L_3 + \# L_4 = 1 + (c-1)(c+n-2) + { {c-1} \choose 2 } + { {c-1+n} \choose 2 },
        \end{align}
        as desired.

        \end{proof}

\subsection{Upper Bound for System Cardinality}

    We are going to use \cref{thm6} to prove \cref{thm2}.

    \begin{thm}[Przytycki {\cite[Theorem 1.7]{przytycki2015arcs}}]
    \label{thm6}
        Let $F = S_{0,n}$ be a punctured sphere with $\chi(F) <0$ and let $p_1$ and $p_2$ be two punctures of $F$, which are not necessarily distinct. We define $\mathscr{A}(F,1,\{p_1\}, \{p_2\})$ as the set of $1$-systems of arcs in $F$ such that for every arc $\alpha$, one of the end points of $\alpha$ is at $p_1$, and the other one is at $p_2$. Then,
        \begin{align}
            \| \mathscr{A}(F,1,\{p_1\}, \{p_2\}) \|_\infty = \tfrac{1}{2}|\chi (F)|(|\chi (F)|+1).
        \end{align}
    \end{thm}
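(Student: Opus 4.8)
The plan is to handle the two cases $p_1\neq p_2$ and $p_1=p_2$ in parallel, proving the lower bound by an explicit construction and the upper bound by induction on the number $n$ of punctures (equivalently on $|\chi|=n-2$). Throughout I work with the geodesic representatives furnished by the complete hyperbolic structure, so that intersections are realised minimally and two distinct classes meet transversally. Write $f(n):=\|\mathscr{A}(S_{0,n},1,\{p_1\},\{p_2\})\|_\infty$ for the quantity to be computed, so the goal is $f(n)=\binom{n-1}{2}=\tfrac12|\chi|(|\chi|+1)$.

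For the lower bound I would exhibit $\binom{n-1}{2}$ arcs realising the bound. Arranging the punctures in convex position and writing down the family of arcs mirroring the type-IV construction of \cref{thm1} and \cref{thm5} (with $p_1,p_2$ now playing the role of the central cross-cap), one checks pairwise that the members are non-isotopic and meet at most once; since two chords of a convex $(n-1)$-gon meet at most once, the count $\binom{n-1}{2}$ is precisely the number of such pairs.

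The heart of the proof is the upper bound. Fix a $1$-system $A$ and a member $\delta\in A$. Cutting $F$ along $\delta$ yields a disk $D$ with $n-2$ interior punctures; this step uses genus $0$ crucially, since removing two disk-neighbourhoods from the sphere gives an annulus and cutting the annulus along $\delta$ gives a disk. I then split $A\setminus\{\delta\}$ into the arcs disjoint from $\delta$ and those meeting $\delta$ exactly once. The key genus-$0$ observation is that any two arcs disjoint from $\delta$ become corner-to-corner arcs of $D$; their geometric intersection in $F$ equals their intersection in $D$, which is even because two properly embedded arcs of a disk sharing both endpoints cross an even number of times. Being in a $1$-system, they are therefore \emph{disjoint}. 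A short Euler-characteristic count on the sphere (two puncture-vertices $p_1,p_2$, $k$ edges, $k$ complementary faces, each of which must contain a distinct puncture to guarantee non-isotopy) shows that a family of pairwise-disjoint non-isotopic arcs from $p_1$ to $p_2$ has at most $n-2$ members; hence $\delta$ together with the arcs disjoint from it number at most $n-2$.

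It remains to bound the arcs meeting $\delta$ exactly once by $\binom{n-2}{2}$, which is the inductive step. Cutting along $\delta$ turns each such arc into a pair of arcs of $D$ joining $\partial D$ to the corners, and a surgery reducing the puncture count by one should send this collection injectively to a $1$-system of arcs between two punctures on $S_{0,n-1}$, which by induction has at most $f(n-1)=\binom{n-2}{2}$ members. Summing gives $\#A\le (n-2)+\binom{n-2}{2}=\binom{n-1}{2}$, closing the induction; the base case $n=3$ is the thrice-punctured sphere, where any two distinct arcs between $p_1$ and $p_2$ differ by a Dehn twist and so meet at least twice, whence $f(3)=1$. I expect the main obstacle to be exactly this last step: making the surgery precise and verifying that the resulting half-arcs again form a genuine $1$-system (pairwise meeting at most once and pairwise non-isotopic) on the reduced surface. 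A secondary obstacle is the degenerate case $p_1=p_2$, where $\delta$ is a loop-arc separating $F$ into two disks, so the disjoint/crossing dichotomy and the Euler-characteristic count must be reorganised, but the same bookkeeping $(n-2)+\binom{n-2}{2}=\binom{n-1}{2}$ is the target.
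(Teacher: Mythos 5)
Your overall strategy (explicit construction for the lower bound, induction on $n$ via cutting along one arc $\delta$ for the upper bound) is reasonable in outline, but the step you call the ``key genus-$0$ observation'' is false, and the upper bound collapses with it. It is not true that two arcs from $p_1$ to $p_2$ that are both disjoint from $\delta$ must meet each other an even number of times. The parity of the intersection of two embedded arcs sharing both endpoints is governed by the cyclic order of their germs at those endpoints, not by the genus: even in an unpunctured disk, an arc from $a$ to $b$ that leaves $a$ on one side of a chord and arrives at $b$ from the other side crosses that chord an odd number of times. Concretely, on $S_{0,4}$ with punctures $p_1,p_2,q_1,q_2$, let $\delta$ and $\alpha$ be the two arcs from $p_1$ to $p_2$ whose union is a circle separating $q_1$ from $q_2$, and let $\beta$ be the image of $\alpha$ under the half-twist supported in a disk containing $q_1,q_2$ and disjoint from $\delta$. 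Then $i(\beta,\delta)=0$, $i(\beta,\alpha)=1$, each of the two bigons cut off by $\alpha\cup\beta$ contains one of $q_1,q_2$, and $\{\delta,\alpha,\beta\}$ is a $1$-system of $\binom{3}{2}=3$ arcs realising the bound of \cref{thm6}. So a maximal system can (indeed, for $n=4$ must) contain two arcs that are disjoint from the cut arc yet cross each other once. Consequently your bound of $n-2$ on ``$\delta$ together with the arcs disjoint from it'' is unjustified, and the bookkeeping $(n-2)+\binom{n-2}{2}$ does not reflect the actual structure of maximal systems. (Your Euler-characteristic count that at most $n-2$ pairwise \emph{disjoint}, pairwise non-isotopic arcs join $p_1$ to $p_2$ is fine; the problem is that the arcs disjoint from $\delta$ need not be pairwise disjoint -- after cutting, they form a genuine $1$-system in a twice-or-more-punctured bigon, not a $0$-system.)

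The remaining inductive step (surgering the arcs that meet $\delta$ exactly once down to a $1$-system on $S_{0,n-1}$) is also only a sketch, and you rightly flag it as the main obstacle: cutting such an arc along $\delta$ produces \emph{two} half-arcs, so both the injectivity of your assignment and the preservation of pairwise intersection $\le 1$ and of non-isotopy need real arguments, as does the degenerate case $p_1=p_2$. For context, the paper does not prove \cref{thm6} at all; it imports the statement from Przytycki, whose argument belongs to the tips-and-nibs/area machinery of \cref{pro2} rather than to an induction on the number of punctures. If you want a self-contained proof along your lines, the first thing to repair is the treatment of the arcs disjoint from $\delta$: as the $S_{0,4}$ example shows, they must be controlled by a genuine inductive or area estimate in the cut-open bigon, not by the disjointness count.
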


    \begin{lem}
        \label{lem 4}
        Any $2$-sided loop on $F := N_{1,n}$ is separating.
    \end{lem}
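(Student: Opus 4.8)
The plan is to argue by contradiction with a cut-and-count argument, deliberately avoiding any ``separating iff null-homologous'' criterion, since that criterion is delicate on a surface with boundary (a boundary-parallel curve is separating yet need not vanish in $H_1(F;\mathbb{Z}_2)$). Suppose $\gamma$ is a $2$-sided simple loop on $F = N_{1,n}$ that is \emph{non}-separating. Because $\gamma$ is $2$-sided its regular neighbourhood is an annulus, so cutting $F$ along $\gamma$ replaces $\gamma$ by exactly two boundary circles; write $F_\gamma$ for the cut surface. Non-separating means $F \setminus \gamma$ is connected, hence $F_\gamma$ is a connected compact surface.

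The main step is Euler characteristic and parity bookkeeping. Cutting along a $2$-sided simple closed curve preserves the Euler characteristic, since $F$ is recovered from $F_\gamma$ by gluing two circles and $\chi(\mathbb{S}^1)=0$; thus $\chi(F_\gamma) = \chi(F) = 2 - 1 - n = 1 - n$. Moreover $F_\gamma$ has exactly $n+2$ boundary components, namely the original $n$ together with the two new copies of $\gamma$. Hence $\chi(F_\gamma) + \#\partial F_\gamma = (1-n)+(n+2) = 3$, an odd number. I then invoke the classification of compact connected surfaces: if $F_\gamma$ were orientable, $F_\gamma \cong S_{h,b}$, then $\chi + b = 2 - 2h$ is even, contradicting $3$; if $F_\gamma$ were non-orientable, $F_\gamma \cong N_{c,b}$ with $c \geq 1$, then $\chi + b = 2 - c \leq 1 < 3$, again a contradiction. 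Since a connected compact surface is one or the other, no such $\gamma$ exists, and every $2$-sided loop on $N_{1,n}$ is separating (the reverse implication, separating $\Rightarrow$ $2$-sided, being automatic).

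I expect the only point needing care — and the place to state things precisely — to be the two invariants of $F_\gamma$: that cutting a $2$-sided curve leaves $\chi$ unchanged and introduces precisely two new boundary circles while keeping $\partial F$ otherwise intact. Everything then collapses to the clean identity $\chi(F_\gamma)+\#\partial F_\gamma = 3$ being incompatible with connectivity. As a sanity check, the same computation behaves correctly on surfaces that \emph{do} carry $2$-sided non-separating curves: on the Klein bottle $N_{2,0}$ one instead gets $\chi + b = 0 + 2 = 2$, realised by the annulus $S_{0,2}$, which pinpoints the single cross-cap of $N_{1,n}$ as exactly the feature that forbids such curves. A purely homological alternative would characterise $2$-sidedness by $\langle w_1(F), [\gamma]\rangle = 0$ and separation by vanishing in $H_1(F,\partial F;\mathbb{Z}_2)$, but reconciling these requires the correct relative formulation together with the degeneracy of the mod-$2$ intersection form on a bordered surface, so I would favour the direct cut-and-count argument above.
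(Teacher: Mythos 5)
Your proof is correct, and it takes a genuinely different route from the paper's. The paper argues more constructively: it takes the annular regular neighbourhood $W(\gamma)$, uses non-separation to find an arc $\alpha$ in the complement joining the two boundary circles of $W(\gamma)$, identifies $F' := W(\gamma)\cup W(\alpha)$ as either $S_{1,1}$ or $N_{2,1}$ via its Euler characteristic and single boundary circle, and rules out both cases by counting cross-caps (the $S_{1,1}$ case additionally needs the complement to be non-orientable and the fact that a handle plus a cross-cap yields three cross-caps). Your argument skips the auxiliary arc entirely: cut along $\gamma$, observe that the connected cut surface satisfies $\chi + \#\partial = (1-n) + (n+2) = 3$, and note that no compact connected surface has $\chi + \#\partial > 2$ (it equals $2-2h$ in the orientable case and $2-c \leq 1$ in the non-orientable case). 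This is shorter, avoids invoking the handle--cross-cap trading relation, and makes transparent that genus $c=1$ is exactly the obstruction (your Klein bottle sanity check, where $\chi+\#\partial = 2$ is realised by an annulus, confirms the bound is sharp). What the paper's version buys in exchange is a concrete subsurface decomposition in the same style it reuses for the $N_{2,n}$ case of \cref{thm2}. The only care point in your version --- that cutting a $2$-sided curve preserves $\chi$ and adds exactly two boundary circles --- is correctly flagged and correctly handled; the closing remark about the reverse implication and the homological alternative is harmless but unnecessary.
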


    \begin{proof}
        Assume that $\gamma$ is a non-separating $2$-sided loop on $F$. Its regular neighborhood $W(\gamma)$ is an annulus. Since $\gamma$ is non-separating, $F \setminus W(\gamma)$ is connected. Therefore, we can find a simple arc $\alpha$ in $F \setminus W(\gamma)$, connecting the two boundaries of $W(\gamma)$. We can determine the topological type of $F' := W(\gamma) \cup W(\alpha)$ by its Euler characteristic (which is $-1$) and the number of boundary components (which is $1$). In particular, $F'$ either $S_{1,1}$ or $N_{2,1}$ depending on whether the band $W(\alpha)$ connects the boundaries of $W(\gamma)$ in an orientation-preserving or reversing manner. If $F' = N_{2,1}$, since $F'$ is a subsurface of $F$ and already has two cross-caps, the number of cross-caps of $F$ must be at least 2. This contradicts the fact that $F$ is $N_{1,n}$. If $F'= S_{1,1}$, then $F \setminus F'$ is non-orientable, otherwise, if both $F'$ and its complement are orientable, their connected sum, $F$, would also be orientable, leading to a contradiction. Thus, the surface $F \setminus F'$ must be $N_{d,n+1}$ for $d \geq 1$. Therefore, $F$ has at least one annulus (in $F'$) and one cross-cap (in the complement of $F'$). According to \cref{TwoCrossCaps}, $F$ must have at least three cross-caps, which contradicts $F = N_{1,n}$.

    \end{proof}

    \begin{recall}[\cref{thm2}]
        Let $F = N_{c,n} $. We have
            \begin{align}
            & \|\widehat{\mathscr{L}}(F,1)\|_\infty \leq
            \begin{cases}
                \tfrac{1}{2}n^2 -  \tfrac{1}{2}n +1 , & \text{\ if $c=1$\ } , \\
                2n^2 +n +2 , & \text{\ if $c=2$\ } , \\
                2|\chi (F)|^2 + 2|\chi (F)| + 1 , & \text{\ otherwise\ } . \\
            \end{cases}\notag
            \end{align}
        \end{recall}

    \begin{proof}[Proof of \cref{thm2}]
        We will sequentially prove the three cases in the statement of \cref{thm2}.

       \emph{(i)} We first consider the case when $F = N_{1,n}$.

        If $L \in \widehat{\mathscr{L}}(F,1)$ contains a separating loop, then the cardinality of $L$ is $1$. Thus, we only consider the case where $L$ consists entirely of 1-sided loops by \cref{lem 4}. Choose one of these loops and call it $\gamma$. We cut $F$ along $\gamma$ to obtain a surface $F' = S_{0,n+1}$ with an additional boundary $\gamma'$ (where $\gamma'$ is the double lift of $\gamma$). Since every loop in $L \setminus \{\gamma\}$ intersects $\gamma$ exactly once, cutting along $\gamma$ severs them into arcs, with both endpoints of each arc on $\gamma'$. We denote the set of these arcs on $F'$ by $A$ (see the right picture of \cref{N1nCase}). Since $A$ is formed by cutting loops that intersect each other exactly once, the arcs in $A$ intersect each other at most once (possibly fewer times, as they might not be in minimal position if not isotoped after cutting). This means that $A$ is a 1-system, even through it might not be complete. Hence, by \cref{thm6},
        \begin{align}
        \#A \leq \tfrac{1}{2}|\chi(F')|(|\chi(F')|+1) = \tfrac{1}{2}|2-(n+1)|(|2-(n+1)|+1) = \tfrac{1}{2}n(n-1). \notag
        \end{align}
        Thus, 
        \begin{align}
        \#L = \#A + 1 \leq \tfrac{1}{2}n(n-1) + 1. \notag
        \end{align}
        This establishes the upper bound for the case $F = N_{1,n}$.

        \begin{figure}[H]
            \centering
            \includegraphics[width=1.0\textwidth]{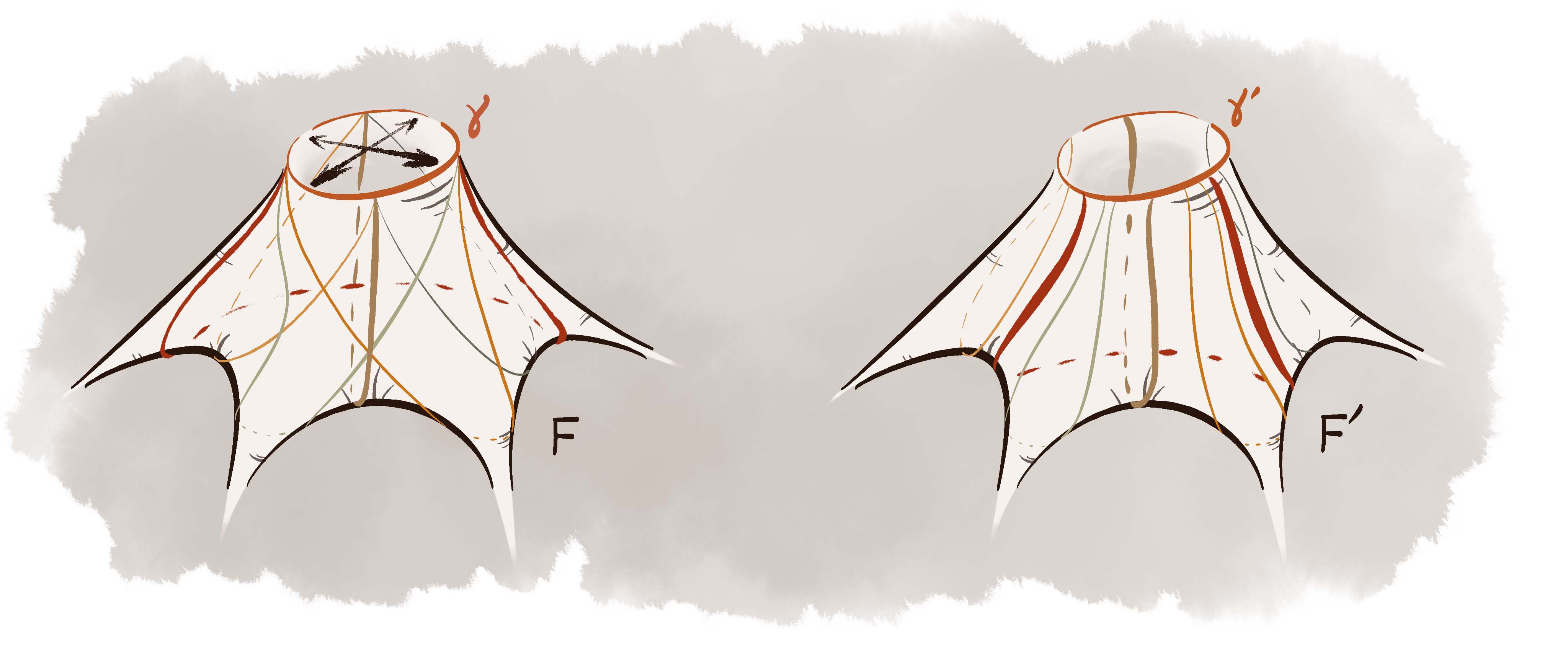}
            \caption{A maximal complete $1$-system of loops on $N_{1,4}$.}
            \label{N1nCase}
            \end{figure}
        
        \emph{(ii)} We next consider the case when $F = N_{2,n}$.

        First, we prove that in a complete $1$-system on $N_{2,n}$, there can be at most one 2-sided loop: suppose $L \in \widehat{\mathscr{L}}(F,1)$ and $L$ has more than one 2-sided loop. Consider any two such loops, $\beta_1$ and $\beta_2$, which intersect exactly once. Since they intersect exactly once and are both 2-sided, the regular neighborhood of the union of $\beta_1, \beta_2$ is a torus with one boundary component, i.e. $S_{1,1}$ (see \cref{rmk1}). Thus, $N_{2,n}$ has an orientable subsurface $S := S_{1,1}$. As the proof of \cref{lem 4}, $F$ must have at least three cross-caps, which contradicts $F = N_{2,n}$.

        Second, consider $L \in \widehat{\mathscr{L}}(F,1)$ containing at least one 1-sided loop. Choose any 1-sided loop in $L$ and call it $\gamma$. We cut $F$ along $\gamma$ to obtain a surface $F' = N_{1,n+1}$ with an additional boundary $\gamma'$. Since every loop in $L \setminus \{ \gamma \}$ intersects $\gamma$ exactly once, cutting along $\gamma$ severs the remaining loops into arcs, each with endpoints on $\gamma'$. Denote the set of these arcs on $F'$ by $A$.
        
        Next, consider the double cover $\widetilde{F'} = S_{0,2n+2} \rightarrow F'$ (see \cref{N2nCase}). Since $\gamma'$ lifts to two loops in $\widetilde{F'}$, the lifts of each arc in $A$ has four endpoints, two on each lift of $\gamma'$. Therefore, each arc in $A$ lifts to two arcs in $\widetilde{F'}$, and these lifts, denoted by $\widetilde{A}$, are also arcs. Consequently, the covering map restricts to each arc in $\widetilde{A}$ is bijective. Hence, the intersection number of any two connected components of lifts of arcs from $A$ is at most $1$, since double lifting ensures that the intersection number between any two lifts of arcs does not increase (otherwise, this would contradict the bijectivity of the covering map on the arc in $\widetilde{A}$). Therefore, the set $\widetilde{A}$, consisting of all the lifts of arcs in $A$, is a 1-system of arcs on $\widetilde{F'}$, satisfying $\# \widetilde{A} = 2 \# A$.

        Let $A_1$ be the set of arcs in $A$ which came from cutting the \emph{1-sided} loops in $L \setminus \{ \gamma \}$. For each lift of an arc in $A_1$, its start and end are on the same lift of $\gamma'$ because they pass through the other cross-caps an even number of times. By \cref{thm6},
        \begin{align}
            \# \widetilde{A_1} &\leq \sum_{i=1}^2 \max \left\{ \# A \middle| A \in \mathscr{A}(\widetilde{F'},1), \text{ for any $\alpha \in A$, both endpoints of $\alpha$ are in $\widetilde{\gamma'_i}$ } \right\} \notag \\
            &= 2 \times \tfrac{1}{2}|\chi(\widetilde{F'})|(|\chi(\widetilde{F'})|+1) = 2n(2n+1), \notag
        \end{align}
        where $\widetilde{\gamma'_1}$ and $\widetilde{\gamma'_2}$ are the two lifts of $\gamma'$. Hence, $\# A_1 = \tfrac{1}{2} \# \widetilde{A_1} \leq n(2n+1)$. Moreover, let $A_2$ be the set of arcs in $A$ which came from cutting the 2-sided loops in $L \setminus \{ \gamma \}$. From the first step of the proof of this case, we know $\# A_2 \leq 1$. Therefore, $\# L = \# A + 1 = \# A_1 + \# A_2 + 1 \leq n(2n+1) + 1 + 1 = 2n^2 + n + 2$. This provides the upper bound for the case $F = N_{2,n}$.

        \begin{figure}[H]
            \centering
            \includegraphics[width=1.0\textwidth]{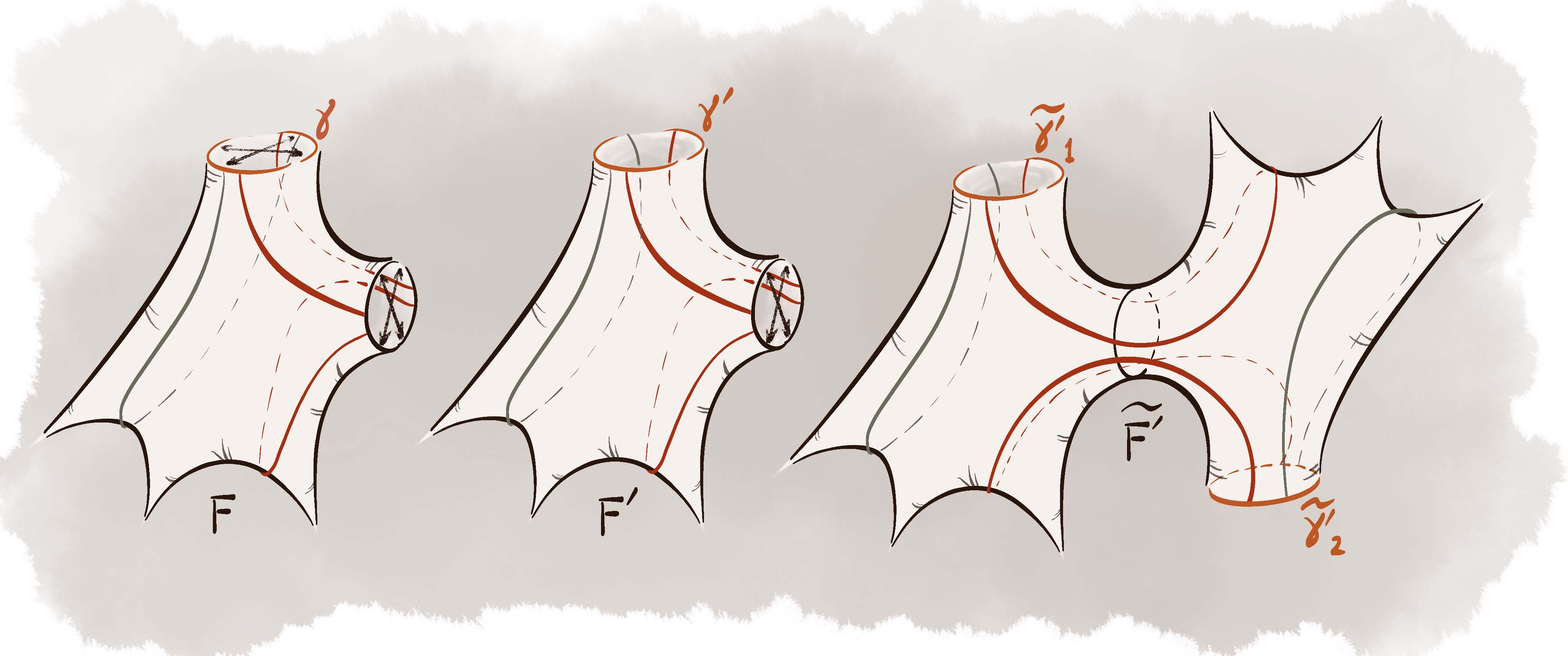}
            \caption{An example of a complete loops $1$-system of loops on $N_{2,3}$}
            \label{N2nCase}
            \end{figure}

        \emph{(iii)} Lastly, we consider the case when $F = N_{c,n}$ with $c \geq 3$.

        If $L \in \widehat{\mathscr{L}}(F,1)$ contains a separating loop, then the cardinality of $L$ is $1$.

        If $L$ consists entirely of non-separating loops. we select a loop $\gamma$ in $L$. Cutting $F$ along $\gamma$ results in a surface $F'$. Since cutting a surface along a loop does not change its Euler characteristic, i.e. $\chi(F') = \chi(F)$. Every loop in $L \setminus \{\gamma\}$ intersects $\gamma$ exactly once. Hence, cutting $F$ along $\gamma$ severs the remaining loops into arcs. We denote the set of these arcs by $A$. By \cref{thm4}, we have
            \begin{align}
                 \#L = \#A + 1 \leq 2|\chi(F')|(|\chi(F')|+1) + 1 = 2 |\chi(F)|(|\chi(F)|+1)+1. \notag
            \end{align}

        Thus, the upper bound for the case when $F = N_{c,n}$ with $c \geq 3$ is $2 |\chi(F)|(|\chi(F)|+1)+1 = 2|\chi (F)|^2 + 2|\chi (F)| + 1$.

    \end{proof}

\section*{Acknowledgement}

    At the end of this paper, I would like to express my gratitude to Yi Huang, my advisor, for his invaluable assistance and guidance. I also extend my thanks to Makoto Sakuma, as well as all the friends and colleagues in Japan who listened to my report, for their valuable suggestions and questions. Finally, I want to express my appreciation to Zhulei Pan, Wujie Shen, Ivan Telpukhovskiy and Sangsan (Tee) Warakkagun as some of the inspiration for this paper stemmed from discussions with them.

\newpage

\printbibliography[title={References}] 



    
    
    
    
    

\end{document}